%
%
%

\documentclass[reqno]{amsart}
\usepackage{fullpage}

\newtheorem{theorem}{Theorem}[section]
\newtheorem{lemma}[theorem]{Lemma}
\newtheorem{proposition}[theorem]{Proposition}

\newtheorem{thm}{Theorem}

\theoremstyle{definition}

\newtheorem{hypothesis}[theorem]{Hypothesis}

\theoremstyle{remark}
\newtheorem{remark}[theorem]{Remark}

\numberwithin{equation}{section}



\usepackage{amsmath,amsfonts,amssymb,amscd}
\usepackage{mathrsfs}
\usepackage{xcolor}

\DeclareMathOperator\Aut{Aut}

\DeclareMathOperator\codim{codim}

\DeclareMathOperator\depth{depth}

\DeclareMathOperator\edim{edim}
\DeclareMathOperator\End{End}
\DeclareMathOperator\Frac{Frac}
\DeclareMathOperator\gr{gr}
\DeclareMathOperator\GKdim{GKdim}
\DeclareMathOperator\gldim{gldim}
\DeclareMathOperator\height{height}
\DeclareMathOperator\Hom{Hom}

\DeclareMathOperator\Kdim{Kdim}
\DeclareMathOperator\lcm{lcm}
\DeclareMathOperator\lm{lm}
\DeclareMathOperator\LT{LT}
\DeclareMathOperator\Maxspec{MaxSpec}

\DeclareMathOperator\ord{ord}

\DeclareMathOperator\rank{rank}
\DeclareMathOperator\Spec{Spec}
\DeclareMathOperator\reg{reg}
\DeclareMathOperator\tr{tr}

\renewcommand\int{\mathrm{int}}

\newcommand\cnt{\mathcal Z}
\newcommand\detq{D}
\newcommand\inv{^{-1}}
\newcommand\ltR{\widetilde{R}}
\newcommand\nag{\Omega}

\newcommand\iso{\cong}
\newcommand\tensor{\otimes}
\newcommand\tornado{\xi}
\newcommand\kk{\Bbbk}

\newcommand\cO{\mathcal O}
\newcommand\cP{\mathcal P}

\newcommand\NN{\mathbb N}

\newcommand\ZZ{\mathbb Z}

\newcommand\ba{\mathbf a}
\newcommand\bb{\mathbf b}

\newcommand\bp{\mathbf p}
\newcommand\bq{\mathbf q}

\newcommand\fm{\mathfrak m}

\newcommand{\pma}{\cO_{\lambda,\bp}(M_n(\kk))}
\newcommand{\pmatwo}{\cO_{\lambda,\bp}(M_2(\kk))}
\newcommand{\pmathree}{\cO_{\lambda,\bp}(M_3(\kk))}
\newcommand{\sqman}{\cO_q(M_n(\kk))}
\newcommand{\sqmatwo}{\cO_q(M_2(\kk))}
\newcommand{\sqmathree}{\cO_q(M_3(\kk))}

\newcommand\restrict[1]{\raisebox{-.5ex}{$|$}_{#1}}

\begin{document}

\title{Centers and automorphisms of PI Quantum Matrix Algebras}

\author{Jason Gaddis}
\address{Miami University, Department of Mathematics, Oxford, Ohio 45056}
\email{gaddisj@miamioh.edu}

\author{Thomas Lamkin}
\address{Miami University, Department of Mathematics, Oxford, Ohio 45056}
\curraddr{Department of Mathematics, University of California San Diego, La Jolla, California 92903}
\email{lamkintd@miamioh.edu}
\thanks{Lamkin was partially supported by the Miami University Dean’s Scholar program.}

\subjclass{16W20,16T20,16S38,16W50}
\date{July 24, 2022}


\keywords{Quantum matrix algebras, noncommutative discriminant, automorphisms}

\begin{abstract}
We study PI quantum matrix algebras and their automorphisms using the noncommutative discriminant. In the multi-parameter case at $n=2$, we show that all automorphisms are graded when the center is a polynomial ring. In the single-parameter case, we determine a presentation of the center and show that the automorphism group is not graded, though we are able to describe certain families of automorphisms in this case, as well as those of certain subalgebras.
\end{abstract}

\maketitle

\section{Introduction}

In \cite{Y2}, Yakimov resolved the Launois-Lenagan conjecture \cite{LL1}, computing the automorphism group of a generic single-parameter quantum matrix algebra. In this work we are interested in a related question, but in the non-generic case. That is, we study automorphisms of single- and multi-parameter quantum matrix algebras at roots of unity.

Throughout, $\kk$ is an algebraically closed field of characteristic zero. All algebras are $\kk$-algebras and all tensor products should be regarded as over $\kk$. For $\tornado \in \kk^\times$, we write $\ord(\tornado)$ to denote the order of $\tornado$ as an element of the multiplicative group $\kk^\times$. The center of an algebra $A$ will be denoted $\cnt(A)$. 

Let $\lambda \in \kk^\times$ and let $\bp$ be a multipliciatively antisymmetric matrix over $\kk$ (i.e., $p_{ij}=p_{ji}\inv$ for all $i,j$). The \textit{($n\times n$) multi-parameter quantum matrix algebra}, $\pma$, 
is generated by $\{x_{ij}\}_{1 \leq i,j \leq n}$ subject to the relations
\[ x_{lm}x_{ij} = 	
\begin{cases}
	p_{li}p_{jm}x_{ij}x_{lm} + (\lambda-1)p_{li}x_{im}x_{lj} 
		& l > i, m > j\\
	\lambda p_{li}p_{jm} x_{ij}x_{lm} 
		& l > i, m \leq j \\
	p_{jm}x_{ij}x_{lm} 
		& l = i, m > j.			
\end{cases}\]

Let $q \in \kk^\times$. One can recover the single-parameter quantum matrix algebra $\sqman$ from the above definition by setting $p_{ij}=q$ for $i > j$ and $\lambda=q^{-2}$. The single-paramter and multi-parameter quantum matrix algebras are also related through cocycle twisting, which we review in the next section. 

For $q \in \kk^\times$ a nonroot of unity, Alev and Chamarie showed that $\Aut(\sqmatwo)$ is a semidirect product $(\kk^\times)^3 \rtimes \{\tau\}$ where $(\kk^\times)^3$ is isomorphic to the group of scalar automorphisms and $\tau$ is the transposition given by $\tau(x_{ij})=x_{ji}$ for all $1 \leq i,j \leq 2$ \cite[Theorem 2.3]{AC}. Launois and Lenagan proved that $\Aut(\sqmathree)$ has a similar description, thus motivating their conjecture that such a result should hold for $\Aut(\sqman)$. As mentioned above, this conjecture was resolved by Yakimov \cite{Y2}. See also \cite{LL2} for results in the case of \emph{rectangular} quantum matrix algebras.

Automorphisms of some quantum algebras at roots of unity have been considered previously, for example by Alev and Dumas \cite{AD1}. Our primary tool for studying automorphisms in the non-generic case will be noncommutative discriminants, as developed by Ceken, Palmieri, Wang, and Zhang \cite{CPWZ1,CPWZ2}. However, the rank of $\pma$ over its center is large and so it is difficult to compute this discriminant directly. 

There are several ways one may overcome this obstruction. One could view $\pma$ as a specialization and consider the Poisson structure on the center so as to employ the Poisson techniques of Nguyen, Trampel, and Yakimov \cite{NTY}. On the other hand, one could view $\pma$ as an iterated Ore extension and in this way invoke results of Kirkman, Moore, and the first author \cite{GKM}. More recently, Chan, Won, Zhang and the first author introduced and utilized the \emph{reflexive hull discriminant} in cases where an algebra is not free over its center \cite{CGWZ1}. Finally, one could use a property $\cP$-discriminant as introduced in \cite{LWZ_Discrim}. Our strategy will be a mixture of these.

In Section \ref{sec.back}, we review necessary prerequisites on quantum matrix algebras and noncommutative discriminants. Section \ref{sec.mpn2} studies automorphisms in the multi-parameter case.  

\begin{thm}[Theorems \ref{thm.n2auto} and \ref{thm.n3auto}]
Let $M=\pmatwo$ be PI. If $\cnt(M)$ is polynomial, then $\Aut(M)=\Aut_{\gr}(M)$.
\end{thm}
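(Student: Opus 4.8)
The plan is to deduce gradedness of automorphisms from the invariance of the noncommutative discriminant, following the philosophy of \cite{CPWZ1}. Write $M = \pmatwo$ and $Z = \cnt(M)$. Since $M$ is a connected graded domain with $M_0 = \kk$, finitely generated in degree one by the four generators $x_{ij}$, the inclusion $\Aut_{\gr}(M) \subseteq \Aut(M)$ is automatic, so it suffices to prove the reverse inclusion, i.e.\ that every algebra automorphism preserves the grading. Here I would first record the standard observation that, for such an algebra, every $\sigma \in \Aut(M)$ is at least \emph{filtered}: the augmentation ideal $M_{\geq 1}$ is characteristic, whence $\sigma(M_1) \subseteq M_{\geq 1}$. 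The engine that upgrades ``filtered'' to ``graded'' is the fact that an automorphism must fix the discriminant up to a unit, together with a rigidity (\emph{dominating}) property of that discriminant.

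Next I would pin down the center. Because $M$ is PI of $\GKdim$ four and $Z$ is assumed to be a polynomial ring, $Z \iso \kk[z_1,\dots,z_4]$; I would identify explicit homogeneous generators $z_i$ (appropriate powers of the $x_{ij}$ together with a power of the quantum determinant $\detq$, with one relation eliminated so as to leave exactly four free generators). Two consequences are immediate and used throughout: $Z$ is a connected graded polynomial ring, so its only units are nonzero scalars, and any $\sigma \in \Aut(M)$ restricts to an automorphism of $Z$ since the center is characteristic.

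I would then compute the discriminant $d = d(M/Z)$. The obstruction is that $M$ need not be free over $Z$, so the classical discriminant is unavailable; instead I would work with the reflexive hull discriminant of \cite{CGWZ1} or a property-$\cP$ discriminant in the sense of \cite{LWZ_Discrim}, choosing whichever is best adapted to the iterated Ore-extension presentation (so that \cite{GKM} applies) or to the Poisson specialization (so that \cite{NTY} applies). The output I want is an explicit \emph{homogeneous} element $d \in Z$, together with the invariance statement: for every $\sigma \in \Aut(M)$ one has $\sigma(d) = c\, d$ with $c \in \kk^\times$, the scalar arising precisely because the units of $Z$ are trivial.

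Finally I would verify that $d$ is a dominating element in the sense of \cite{CPWZ1}, i.e.\ that its leading form in $\gr M$ is rigid enough that no filtered automorphism merely scaling $d$ can fail to preserve the degree-one part. Granting this, the main theorem of \cite{CPWZ1} promotes the scalar invariance $\sigma(d)=c\,d$ to the conclusion that $\sigma$ is graded, giving $\Aut(M) = \Aut_{\gr}(M)$. I expect the genuine difficulty to lie in the third step: extracting a usable closed form for $d$ despite $M$ not being free over its center, and then confirming the dominating hypothesis from that closed form. Once $d$ is in hand as a homogeneous polynomial in $z_1,\dots,z_4$, the dominating check and the final deduction are comparatively formal, and the polynomial hypothesis on $\cnt(M)$ is exactly what guarantees both the trivial-units input and a clean enough expression for $d$.
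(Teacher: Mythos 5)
Your outline follows the same strategy as the paper: compute $d(M/Z)$, show it is (locally) dominating, and invoke \cite[Theorem 2.7]{CPWZ1}. But three points need correcting. First, your ``standard observation'' that the augmentation ideal $M_{\geq 1}$ is characteristic, so that every automorphism is filtered, is false in general (already for $\kk[x]$ the map $x\mapsto x+1$ does not preserve it); establishing that automorphisms respect the filtration is precisely what the dominating-discriminant machinery is for, so you cannot assume it as a warm-up. It is not load-bearing in your argument, but it should be deleted. Second, you misdiagnose why the reflexive hull discriminant is needed: under the polynomial-center hypothesis (realized in the paper via Hypothesis \ref{hyp.gcd2}, where $Z=\kk[x_{ij}^{\ell}]$), $M$ \emph{is} free over $Z$ of rank $\ell^4$, and the ordinary discriminant exists. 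The reflexive hull result of \cite{CGWZ1} is used instead to \emph{patch}: the discriminant is only computable after localizing, and one glues the answers over the two opens $\{y_{11}\neq 0\}$ and $\{y_{22}\neq 0\}$, whose complement has codimension $2$. Third, and most substantively, the actual computation of $d$ --- which you defer as ``the genuine difficulty'' --- is the bulk of the paper's proof: one localizes at $y_{11}$, observes that the $\sigma$-derivation $\delta$ in the Ore presentation $M=R[x_{22};\sigma,\delta]$ becomes inner (via $\omega=p_{21}x_{11}^{-1}x_{12}x_{21}$), so the localization is a localized quantum affine space whose discriminant is known from \cite[Proposition 2.8]{CPWZ2}; this yields $d(M/Z)=_{\kk^\times}(x_{12}x_{21}D)^{\ell^4(\ell-1)}$, after which local dominance follows as in \cite[Proposition 5.10]{NTY} and gradedness via \cite[Theorem 2.7]{CPWZ1} together with \cite[Proposition 4.2]{LL2}. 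Without that localization trick your plan has no route to a closed form for $d$, so as written the proposal is an accurate road map of the paper's argument rather than a proof.
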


In addition we give explicit conditions for the center to be polynomial and compute the automorphism group, showing that the result aligns with that of Launois and Lenagan.

For the $n=3$ case, we do not know if there exist examples where the center is polynomial. However, under this hypothesis it is possible to show that all automorphisms are graded. We refer the interested reader to the appendix 
for that argument. Note that, in general, centers of PI AS regular algebras are quite complicated even for basic examples such as PI skew polynomial rings. See \cite{CPWZ2,CGWZ2} for partial progress on this question.

In the remainder of the paper, we study the single-parameter case $\sqman$ assuming $q$ is a root of unity. For much of this we will assume $\ord(q)=m \geq 3$ is an odd positive integer, but offer some general results as well. 

Section \ref{sec.spcnt} is a study on the center of $\sqman$. In our setting the generators of the center were computed by Jakobsen and Zhang \cite{JZ1}. We give a presentation of the center using Gr\"{o}bner basis techniques (Theorem \ref{thm.center_pres}). We are further able to derive several properties of the center.

\begin{thm}[Theorem \ref{thm.Zprops}]
Assume that $q$ is a root of unity such that $\ord(q)=m \geq 3$ is odd. Then $Z=\cnt(\sqman)$ is not Gorenstein and $\sqman$ is not projective over $Z$.
\end{thm}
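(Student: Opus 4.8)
The plan is to deduce the non-projectivity from the non-Gorenstein statement, so the real work is the latter. Throughout write $A=\sqman$ and $Z=\cnt(A)$, and set $C=\kk[x_{ij}^m\mid 1\le i,j\le n]\subseteq Z$. Since $A$ is an iterated Ore extension with a PBW basis of ordered monomials in the $x_{ij}$, at a primitive $m$-th root of unity it is a free $C$-module on the monomials $\prod x_{ij}^{a_{ij}}$ with $0\le a_{ij}<m$; in particular $C$ is a polynomial ring of Krull dimension $n^2=\GKdim A=\Kdim Z$, and $Z$ is module-finite over $C$. Thus $C\hookrightarrow Z$ is a graded Noether normalization, and I would analyze $Z$ through the presentation of Theorem \ref{thm.center_pres} relative to it.

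First I would show $Z$ is Cohen--Macaulay. The Gröbner basis underlying Theorem \ref{thm.center_pres} should exhibit the standard monomials of $Z$ as a free basis over $C$: the leading terms of the defining relations involve only the ``extra'' generators (the quantum determinant $\detq$ and the off-diagonal central monomials), while the $x_{ij}^m$ remain free parameters. As $C$ is regular, $C$-freeness gives $\depth_{\fm}Z=n^2$, so $Z$ is Cohen--Macaulay. (Alternatively, the normalized reduced trace $\tfrac1{\PIdeg}\operatorname{trd}\colon A\to Z$ is a $Z$-linear retraction of $Z\hookrightarrow A$, realizing $Z$ as a summand of the maximal Cohen--Macaulay $Z$-module $A$.) Granting this, $Z$ is Gorenstein iff its Cohen--Macaulay type is $1$, i.e.\ iff the socle of the Artinian reduction $\overline{Z}=Z\otimes_C\kk=Z/\fm_C Z$ is one-dimensional. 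Here the identity expressing $\detq^{\,m}$ as a polynomial in the $x_{ij}^m$ places $\detq^{\,m}$ in the irrelevant ideal of $C$, so $\detq^{\,m}=0$ in $\overline Z$ and the class of $\detq^{\,m-1}$ is a socle element in degree $n(m-1)$; on the other hand a degree-$m$ off-diagonal central monomial multiplies into $\fm_C Z$, contributing a socle element in degree $m$. Since $n(m-1)>m$ for $n\ge 2,\ m\ge 3$, the socle of $\overline Z$ meets at least two distinct degrees, hence cannot be one-dimensional, and $Z$ is not Gorenstein. (Equivalently, one may compute $H_Z(t)$ from the standard-monomial basis and check that it fails the palindromic symmetry $H_Z(1/t)=\pm t^{a}H_Z(t)$ required by Stanley's criterion for a Cohen--Macaulay graded domain.)

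For the second statement, note that $A$ is a finitely generated graded module over the connected graded ring $Z$, so $A$ is projective iff it is graded free: lifting a homogeneous $\kk$-basis of $A/\fm_Z A$ gives a graded surjection $F\twoheadrightarrow A$ from a graded free module whose kernel $K$ satisfies $K/\fm_Z K=0$ (as $A$ is projective, hence flat, $\operatorname{Tor}^Z_1(A,\kk)=0$), whence $K=0$ by graded Nakayama. So it suffices to show $A$ is \emph{not} free over $Z$, and for this I would argue that freeness would force $Z$ to be Gorenstein, contradicting the first part. Indeed $A$ is AS-regular of global dimension $n^2$ (an iterated Ore extension of $\kk$), hence AS-Gorenstein. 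If $A$ were $Z$-free then $Z\to A$ is faithfully flat, and the change-of-rings isomorphisms $\operatorname{Ext}^i_Z(\kk,Z)\otimes_Z A\cong\operatorname{Ext}^i_Z(\kk,A)\cong\operatorname{Ext}^i_A(A/\fm_Z A,\,A)$, together with the finite $\kk$-filtration of $A/\fm_Z A$, would show $\operatorname{Ext}^i_Z(\kk,Z)$ vanishes for $i\ne n^2$ and is one-dimensional for $i=n^2$ (comparing $\kk$-dimensions); that is, $Z$ would be Gorenstein. This contradiction shows $A$ is not free, hence not projective, over $Z$.

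The main obstacle is the socle computation in $\overline Z$: extracting from Theorem \ref{thm.center_pres} a proof, uniform in $n$ and $m$, that $\detq^{\,m-1}$ is nonzero in $\overline Z$ and that a degree-$m$ off-diagonal central monomial is annihilated by \emph{every} positive-degree generator, in particular by $\detq$, so that both genuinely lie in the socle. Deriving $C$-freeness (hence Cohen--Macaulayness) cleanly from the Gröbner data is the other delicate point; the remaining steps are formal.
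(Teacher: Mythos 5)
Your overall framework for the Gorenstein statement is the same as the paper's: pass to the Artinian reduction $\overline Z = Z/\fm_C Z$ by the system of parameters $\{x_{ij}^m\}$ (the paper works with $Z_\fm/(Z_{ij})$ at the irrelevant maximal ideal, which amounts to the same ring) and show its socle has dimension at least $2$. But the specific socle elements you propose are wrong, and this is precisely the step you flagged as the main obstacle. From the presentation in Theorem \ref{thm.center_pres}, the only relations surviving in $\overline Z$ are $D^m=0$ and $Y_{ti}Y_{tj}=0$; there is no relation involving $D\cdot Y_{tr}$, nor any involving $Y_{tr}Y_{t's}$ with $t\neq t'$. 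Hence $\overline Z\cong \kk[D,Y_{tr}]/(D^m,\,Y_{ti}Y_{tj})$, and the standard monomials $D^a\prod_t Y_{t,r_t}^{e_t}$ with $a<m$ and $e_t\in\{0,1\}$ form a $\kk$-basis. In particular $D^{m-1}\cdot Y_{11}\neq 0$ and $D\cdot Y_{1r}\neq 0$ in $\overline Z$, so neither $D^{m-1}$ nor any single $Y_{tr}$ lies in the socle. The socle is instead spanned by the $(m-1)^{n-1}$ monomials $D^{m-1}\prod_{t=1}^{n-1}Y_{t,r_t}$, and the paper simply exhibits the two with all $r_t=1$ and all $r_t=2$. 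Note also that your fallback ``two distinct degrees'' argument cannot be rescued for $n=2$: all the $Y_{1r}=b^rc^{m-r}$ have the same degree $m$, so the correct socle elements $D^{m-1}Y_{11}$ and $D^{m-1}Y_{12}$ sit in the same degree, and one must read their linear independence directly off the standard monomial basis rather than off the Hilbert series.

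For non-projectivity your route is genuinely different from the paper's and, granted the first part, it works: projective implies graded free by graded Nakayama, and freeness would transport the AS-Gorenstein property of $\sqman$ down to $Z$ via the change-of-rings isomorphisms you describe (one also needs the standard fact that vanishing of $\operatorname{Ext}^i_Z(\kk,Z)$ for $i\gg 0$ forces $Z$ to have finite injective dimension). The paper instead gives a self-contained argument independent of the Gorenstein computation: from a dual basis $\{(z_i,f_i)\}$ one finds $f_j(x_{1n})\in\kk^\times$ for some $j$, and the identity $x_{1n}\tau(\detq(n-1))^m=Y_{11}\tau(\detq(n-1))$ then forces $x_{1n}$ to divide $\tau(\detq(n-1))^m f_j(x_{1n})$, a contradiction. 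Your version is shorter but conditional on the first part; the paper's is elementary and unconditional.
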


In Section \ref{sec.spauto} we study automorphisms of $\sqman$. We show that the Launois-Lenagan conjecture does not extend to this setting.

\begin{thm}[Theorem \ref{thm.free_prod}]
Assume that $q$ is a root of unity. Then
$\sqman$ has non-graded automorphisms. Moreover, $\Aut(\sqman)$ contains a free product on two generators.
\end{thm}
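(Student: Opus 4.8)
The plan is to exhibit explicit non-graded automorphisms and then leverage them to build a free product inside $\Aut(\sqman)$. The key structural observation is that $\sqman$ contains a quantum determinant $\detq_q$ (the quantum determinant), which is a central element when $q$ is a root of unity (indeed, central even in the generic setting). The first step is to identify a central element $z \in \cnt(\sqman)$, most naturally $z = \detq_q$ or a suitable power thereof, and a generator $x = x_{ij}$ on which $z$ acts compatibly, so that a map of the form $x_{ij} \mapsto x_{ij} + (\text{polynomial in central elements})$ extends to an algebra automorphism. Concretely, I would look for a triangular-type automorphism $\phi$ fixing all generators except one, sending that generator $x_{ij} \mapsto x_{ij} + c\, z^k$ for a scalar $c$ and some central $z$ and exponent $k \geq 1$; because $z$ is central and $z^k$ has degree $\geq 2$ (strictly larger than the degree-one generator), such a $\phi$ is manifestly non-graded while respecting all the defining relations. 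The main point to verify is that adding a central element of the correct weight does not disturb the quadratic relations, which reduces to checking that $x_{ij}$ and $z^k$ have the same ``commutation character'' against every other generator.

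Once one such non-graded automorphism $\phi$ is in hand, the second step is to produce a second automorphism $\psi$ so that $\phi$ and $\psi$ generate a free product $\langle \phi \rangle * \langle \psi \rangle$. The natural candidates are two shear-type maps built from different central elements, or the same shear conjugated by a graded symmetry such as the transposition $\tau$ (from the Alev--Chamarie description) or by a scalar automorphism. The standard mechanism is a ping-pong argument: I would let $\phi$ and $\psi$ act on the affine variety $\Maxspec(\cnt(\sqman))$ (or on the generators viewed in an appropriate filtered/graded piece) and show that suitable disjoint regions are interchanged, forcing any nontrivial reduced word in $\phi,\psi$ to act nontrivially. Alternatively, and perhaps more cleanly for an algebra automorphism setting, one tracks the action on the leading term of the image of a fixed generator: if $\phi$ and $\psi$ increase degree in ``independent directions'' (e.g.\ via central elements $z_1, z_2$ that are algebraically independent in $\cnt(\sqman)$), then no cancellation can occur across an alternating word, yielding freeness.

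The hard part will be the ping-pong / freeness verification: constructing the two shears is routine linear-algebra-plus-relation-checking, but proving that the subgroup they generate is genuinely free of rank two (rather than satisfying some hidden relation) requires a clean invariant that distinguishes reduced words. I expect the cleanest route is to choose $\phi$ and $\psi$ to be of the form $x_{11} \mapsto x_{11} + f(z)$ and $x_{nn} \mapsto x_{nn} + g(w)$ for central $z, w$ built from distinct quantum minors, arranged so that $\phi$ alters only the $x_{11}$-component of an image while leaving a second observable coordinate invariant, and $\psi$ does the reverse; then an alternating word $\cdots \phi^{a} \psi^{b} \phi^{c} \cdots$ produces a strictly growing tower of iterated substitutions whose top-degree term cannot vanish. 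Establishing that these central elements are algebraically independent (so the substitutions never collapse) is where the presentation of the center from Theorem \ref{thm.center_pres} should be invoked. I would close by remarking that the mere existence of a single non-graded $\phi$ already gives the first assertion, so the free-product claim is the genuinely new content and the ping-pong step is the crux.
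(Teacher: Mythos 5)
There is a genuine gap at the very first step: no automorphism of the form $x_{ij}\mapsto x_{ij}+c\,z^k$ with $z$ \emph{central} exists when $q\neq 1$. The generator $x_{11}$ does not commute with the other entries of its row and column; it satisfies $x_{11}x_{1j}=q\,x_{1j}x_{11}$ and $x_{11}x_{i1}=q\,x_{i1}x_{11}$. Applying your proposed shear to such a relation yields $c(1-q)z^kx_{1j}\neq 0$, so the map is not a homomorphism. Your own criterion --- that the added term must have the same ``commutation character'' as $x_{ij}$ against every other generator --- is exactly right, but it rules out every central element, since central elements have trivial commutation character while no generator of $\sqman$ does. The paper's construction uses instead the \emph{normal but non-central} element $A(1,1)^{m-1}$, where $A(1,1)$ is the quantum minor complementary to $x_{11}$ (so it commutes with all $x_{ij}$, $i,j\geq 2$, and $q^{-1}$-commutes with the first row and column); raising it to the power $m-1$ and using $q^m=1$ makes its commutation character match that of $x_{11}$ exactly. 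This is the step where the root-of-unity hypothesis enters, and your plan as written never uses it in the construction, which is a sign the approach cannot work: for generic $q$ the automorphism group is known to be entirely graded, so any recipe that does not exploit $\ord(q)=m<\infty$ must fail.

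Your strategy for freeness --- tracking leading terms of iterated substitutions and showing alternating words produce strictly growing, non-cancelling top-degree terms --- is essentially the paper's argument (it uses two different monomial orders, lex and reverse lex, to control the images of $x_{11}$ and $x_{nn}$ under $\phi$ and $\psi$ respectively), and no ping-pong on $\Maxspec(\cnt(\sqman))$ or appeal to Theorem \ref{thm.center_pres} is needed. But that part only becomes available once the shears are built from the quantum minors $A(1,1)^{m-1}$ and $A(n,n)^{m-1}$ rather than from central elements.
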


We show additionally that $\Aut(\sqmatwo)$ contains an automorphism which restricts to a wild automorphism on a polynomial subalgebra (Theorem \ref{thm.wild_auto}). Using discriminants, we establish a certain ideal that is fixed by every automorphism (Proposition \ref{prop.P_discrim}). We conclude by studying certain subalgebras 
of $\sqmathree$.

\section{Background on quantum matrix algebras and discriminants}\label{sec.back}

In this section we provide some basic background and references for our main results and computations below. For an algebra $A$, we denote its Gelfand-Kirillov (GK) by $\GKdim(A)$ and its global dimension by $\gldim(A)$. If $A$ is commutative, we denote its Krull dimension by $\Kdim(A)$.

\subsection{Quantum matrix algebras}

Notice that if $\lambda=1$, then $\pma$ is isomorphic to some quantum affine space. On the other hand, when $\lambda=-1$, then $\pma$ does not have the same Hilbert series as a polynomial ring in $n^2$ variables. Hence, we assume throughout $\lambda^2 \neq 1$.

Recall that if $\Gamma=(\gamma_{ij})$ is a multiplicatively antisymmetric matrix, then the \emph{quantum affine space} $\mathcal{O}_{\Gamma}(\kk^n)$ is the algebra generated by $x_1,\hdots,x_n$ with relations $x_ix_j=\gamma_{ij} x_jx_i$ for all $i,j$. Quantum matrix algebras are related to quantum affine spaces in a way explained in the next proposition.

We begin by reviewing several well-known properties of $\pma$ and its center. Recall that a prime affine algebra $A$ which is finitely generated over its center is \emph{homologically homogeneous} if all simple $A$-modules have the same projective dimension over the center.

\begin{proposition}
Assume $\lambda^2\neq 1$. Let $M=\pma$. Then the following hold.
\begin{enumerate}
    \item \label{prop1} $M$ is Artin-Schelter regular with $\gldim(M)=\GKdim(M)=n^2$.
    \item \label{prop4} $M$ is PI if and only if $\lambda$ and the $p_{ij}$, $1 \leq i,j \leq n$, are roots of unity. 
    \item \label{prop5} If $M$ is PI, then $\cnt(M)$ is a Cohen-Macaulay (CM) normal Krull domain.
\end{enumerate}
\end{proposition}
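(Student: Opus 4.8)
The plan is to treat the three statements separately, using the iterated Ore extension structure of $M$, the cocycle twist relating $M$ to a quantum affine space, and general structure theory for PI Artin-Schelter regular algebras.

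For part (1), I would first present $M=\pma$ as an iterated Ore extension of $\kk$ in the $n^2$ generators $x_{ij}$, ordered so that each defining relation rewrites a product $x_{lm}x_{ij}$ (larger index pair on the left) as a scalar multiple of $x_{ij}x_{lm}$ plus strictly lower terms; the relations as written make such an ordering admissible, so that adjoining each generator amounts to a skew polynomial extension $A[x;\sigma,\delta]$ with $\sigma$ a graded automorphism and $\delta$ a graded $\sigma$-derivation (the derivation term is exactly the $(\lambda-1)p_{li}x_{im}x_{lj}$ contribution in the first case). Iterated Ore extensions of $\kk$ are Noetherian connected graded domains, and since each step adjoins a single variable, both $\gldim(M)$ and $\GKdim(M)$ equal $n^2$. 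Artin-Schelter regularity is preserved under each Ore extension step, with the hypothesis $\lambda^2\neq 1$ ensuring that the associated graded algebra is a quantum affine space with the Hilbert series of a polynomial ring in $n^2$ variables.

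For part (2), I would invoke the cocycle twist reviewed above, which yields a graded-vector-space isomorphism between $M$ and a quantum affine space $\cO_\Gamma(\kk^{n^2})$ whose parameters $\gamma_{st}$ are monomials in $\lambda$ and the $p_{ij}$. Since the PI property is preserved under cocycle twisting, $M$ is PI if and only if $\cO_\Gamma(\kk^{n^2})$ is. A quantum affine space is PI precisely when all of its defining parameters are roots of unity, so it remains to check that the subgroup of $\kk^\times$ generated by the $\gamma_{st}$ coincides with the subgroup generated by $\lambda$ and the $p_{ij}$; granting this, $M$ is PI if and only if $\lambda$ and all the $p_{ij}$ are roots of unity.

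For part (3), assume $M$ is PI. By part (1), $M$ is a Noetherian Artin-Schelter regular, hence Auslander-Gorenstein and Cohen-Macaulay, domain of finite global dimension, and being PI it is module-finite over its center $\cnt(M)$. These conditions force $M$ to be homologically homogeneous. I would then apply the structure theory of homologically homogeneous rings (Brown-Hajarnavis): such rings are maximal orders whose centers are Cohen-Macaulay and integrally closed; integral closure together with Noetherianity gives the Krull property, and the domain property of $\cnt(M)$ follows from that of $M$. I expect this last part to be the main obstacle, not through any long computation but because it requires assembling the correct chain of structural implications, namely that Artin-Schelter regular plus PI implies homologically homogeneous, and that homologically homogeneous implies the center is a Cohen-Macaulay normal Krull domain, each resting on a nontrivial theorem rather than a direct argument.
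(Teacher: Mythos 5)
Your parts (1) and (3) are correct but take a somewhat different route from the paper. For (1) the paper simply cites Artin--Schelter--Tate, whereas your iterated Ore extension argument is a standard, more self-contained alternative, and you correctly locate where $\lambda\neq -1$ enters. For (3) the paper does not invoke ``AS-regular $+$ PI $\Rightarrow$ homologically homogeneous'' as a black box; instead it exhibits the explicit central polynomial subalgebra $C=\kk[x_{ij}^m]$ with $m=\lcm(\ord(p_{ij}),\ord(\lambda p_{ji}))$, observes that $M$ is free over $C$ on the monomials $\prod x_{ij}^{r_{ij}}$ with $0\leq r_{ij}<m$, hence a CM $C$-module, and gets homological homogeneity from a lemma of Stafford--Van den Bergh; from there the two arguments converge (Stafford--Van den Bergh for the normal CM conclusion, Stafford--Zhang for the Krull property). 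Your version is fine if you can actually cite the structural implication you assert, but the paper's freeness argument is cheaper and produces the subalgebra $C$ that is used again later.

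The genuine gap is in part (2). The cocycle twist reviewed in the paper relates $\pma$ to the single-parameter algebra $\sqman$; it does not, and cannot, produce a quantum affine space. A cocycle twist only rescales the multiplication on $\ZZ^n\times\ZZ^n$-homogeneous components, and the monomials $x_{22}x_{11}$, $x_{11}x_{22}$, $x_{12}x_{21}$ all have the same bidegree, so the twisted relation coming from $x_{22}x_{11}-x_{11}x_{22}=(\lambda-1)p_{21}x_{12}x_{21}$ retains a nonzero inhomogeneous term whenever $\lambda\neq 1$; the twist of a quantum matrix algebra is again a quantum matrix algebra, not $\cO_\Gamma(\kk^{n^2})$. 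Moreover, the premise ``the PI property is preserved under cocycle twisting'' is false in general: twisting $\kk[x,y]$ by a bicharacter taking a value $q$ of infinite order yields $\cO_q(\kk^2)$, which is not PI, so even a correct twist would leave you needing the cocycle values to be roots of unity, which is circular here. The reduction to a quantum affine space actually used by the paper takes place at the level of quotient division rings: $M$ is a $q$-skew iterated Ore extension, and by Haynal's result $\Frac(M)$ is isomorphic to the quotient division ring of $\cO_\Gamma(\kk^{n^2})$ with the $\gamma_{st}$ products of $\lambda$ and the $p_{ij}$; since a prime Noetherian ring is PI if and only if its quotient division ring is, this yields the equivalence, modulo the subgroup comparison you already flag as needing verification.
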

\begin{proof}
\eqref{prop1} This follows from \cite[Theorem 2]{ASTgln}.

\eqref{prop4} This is clear by noting that $\Frac M$ is isomorphic to the quotient division ring of $\mathcal{O}_{\Gamma}(\kk^{n^2})$ where $\Gamma$ is the multiplicatively antisymmetric matrix whose entries are products of the $p_{ij}$ and $\lambda$ (\cite[Corollary 4.7]{haynal}).

\eqref{prop5}  Let $m=\lcm(\ord(p_{ij}),\ord(\lambda p_{ji}))$. Then it is straightforward to see that $C=\kk[x_{ij}^m \mid 1\leq i,j\leq n]$ is a central subalgebra of $M$. Moreover, $M$ is a free $C$-module with basis $\{\prod_{i,j=1}^nx_{ij}^{r_{ij}}\mid 0\leq r_{ij}<m\}$. Since $C$ is CM, $M$ is a CM $C$-module. Therefore, $M$ is homologically homogeneous by \cite[Lemma 2.4]{SVdb_Resolutions}. That $\cnt(M)$ is a normal CM domain then follows by \cite[Theorem 2.3(d)]{SVdb_Resolutions}. Since $M$ is a homologically homogeneous Noetherian PI domain, $Z$ is a Krull domain by \cite[Theorem 1.4(iii)]{SZ_HomProps}.
\end{proof}

The quantum determinant of $\pma$ is defined as
\begin{align}\label{eq.qdet}
D_{\lambda,\bp}
	= \sum_{\pi \in S_n} \left( \prod_{\substack{1 \leq i < j \leq n \\ \pi(i) > \pi(j)}} (-p_{\pi(i),\pi(j)}) \right) x_{1,\pi(1)} x_{2,\pi(2)} \cdots x_{n,\pi(n)},
\end{align}
where $l(\pi)$ is the number of inversions in $\pi$. 
By \cite[Theorem 3(a)]{ASTgln},
\begin{align}\label{eq.normal}
D_{\lambda,\bp} x_{ij} = \lambda^{j-i} \left( \prod_{\ell=1}^n p_{j\ell} p_{\ell i} \right) x_{ij} D_{\lambda,\bp}.
\end{align}
That is, $D_{\lambda,\bp}$ is a normal element in $\pma$. 

In the single-parameter case, \eqref{eq.qdet} reduces to
\begin{align}\label{eq.qdets}
    D_q=\sum_{\pi\in S_n}(-q)^{l(\pi)}x_{1,\pi(1)}x_{2,\pi(2)}\cdots x_{n,\pi(n)},
\end{align}
By \eqref{eq.normal} or \cite[Theorem 4.6.1]{PW}, $D_q$ is a central element in the single-parameter case. By \cite[Corollary 4.4.4]{PW}, we have the following quantum Laplace expansion formula in the single-parameter case,
\begin{align}\label{eq.laplace}
D_q = \sum_{j=1}^n (-q)^{j-i} x_{ij} A(i,j)
= \sum_{j=1}^n (-q)^{j-i} x_{ji} A(j,i).
\end{align}
We caution the reader that our convention differs from Parshall and Wang (we replace $q\inv$ by $q$).

For subsets $I,J \subset \{1,\hdots,n\}$ with $|I|=|J| \geq 1$, the subalgebra of $\pma$ generated by $\{ x_{ij} \mid i \in I, j \in J\}$ is another quantum matrix algebra and the quantum determinant of this subalgebra, denoted $D(I,J)$, is a \emph{quantum minor} of $\pma$.

Let $I,J \subset \{1,\hdots,n\}$ with $|I|=|J| =k \geq 1$. Set $\widetilde{I} = \{1,\hdots,n\} \backslash I$ and $\widetilde{J} = \{1,\hdots,n\} \backslash J$. For simplicity, let $\widetilde{i} = \widetilde{ \{i\} }$ in the case of a singleton. Set $A(I,J)=D(\widetilde{I},\widetilde{J})$ (and $A(i,j)$ in the case of singletons). For $1\leq t\leq n$, let
\begin{align}\label{eq.tdet}
D(t):=D(\{1,\ldots,t\},\{n-t+1,\ldots,n\}).
\end{align}

Cocycle twisting for $\pma$ was first established by Artin, Schelter, and Tate, though our review follows from Goodearl and Lenagan \cite{GL2}.

The algebra $\pma$ has a bigrading by $\ZZ^n \times \ZZ^n$ in which $x_{ij}$ has degree $(\epsilon_i,\epsilon_j)$. Choose $q$ such that $\lambda=q^{-2}$. We write $\ba,\bb \in \ZZ^n$ for $\ba=(a_i)$ and $\bb=(b_i)$. Define a map $c:\ZZ^n \times \ZZ^n \to \kk^\times$ by
\[ c(\ba,\bb) = \prod_{i > j} (qp_{ji})^{a_i b_j}.\] 
Then 
\[
c(\ba+\ba',\bb) = c(\ba,\bb)c(\ba',\bb) \quad\text{and}\quad
c(\ba,\bb+\bb') = c(\ba,\bb)c(\ba,\bb'),
\]
so that $c$ is a multiplicative bicharacter on $\ZZ^n$ and thus a $2$-cocycle. Then
\[ c(\epsilon_i,\epsilon_j) = \begin{cases} qp_{ji} & \text{ if $i>j$} \\ 1 & \text{ if $i \leq j$}.\end{cases}\]
Let $A_q = \sqman$. We use $c$ to twist the multiplication on $A_q$ to obtain an algebra $A_q'$. There is an isomorphism of graded vector spaces $A_q \to A_q'$ given by $a \mapsto a'$. The multiplication on $A_q'$ is given by 
\[ a'b' = c(u_1,v_1)\inv c(u_2,v_2) (ab)'\]
for homogeneous $a,b \in A_q$ of bidegrees $(u_1,u_2)$ and $(v_1,v_2)$, respectively. Then it is easily checked that $A_q' \iso \pma$.

Using the method of cocycle twists above we can easily establish a multi-parameter version of \eqref{eq.laplace}.

\subsection{Discriminants}

Applications of discriminants to the study of automorphism groups of noncommutative algebras were studied by Ceken, Palmieri, Wang, and Zhang \cite{CPWZ1,CPWZ2}. Since then, it has been implemented by various authors and many tools have been developed to aid in their computation, thus resolving a host of problems in noncommutative algebra and noncommutative invariant theory. 

Let $A$ be a $\kk$-algebra and $C$ a central subalgebra which is a domain and such that $A$ is finitely generated over $C$. In general, $A$ may not be free over $C$, so we pass to a localization $F$ of $C$ such that $A_F := A \tensor_C F$ is free (and finitely generated) over $F$. Set $w=\rank_F(A_F) < \infty$. Then left-multiplication gives a natural embedding $\lm: A \to A_F \to \End_F(A_F)$. By the hypotheses, $\End_F(A_F) \iso M_w(F)$ and we let $\tr_{\int}$ denote the usual matrix trace in $M_w(F)$. The \emph{regular trace} is then the composition
\[ \tr_{\reg}: A \xrightarrow{\lm} M_w(F) \xrightarrow{\tr_{\int}} F.\]
Here, we only consider the case that the algebra $A$ is free over $C$ of rank $w<\infty$, and the image of $\tr_{\reg}$ is in $C$. In this case, we can take a $C$-algebra basis $\{c_1,\hdots,c_w\}$ of $A$. Then the discriminant is defined as
\[ d(A/C) = \det(\tr_{\reg}(c_ic_j)_{i,j=1}^w) \in C.\]
Note that the discriminant is unique up to a unit in $C$. For any automorphism $\phi$ of $A$ which preserves $C$, the discriminant is preserved by $\phi$ up to a unit in $C$.

Now let $A$ be a prime $\kk$-algebra and let $Z$ be the center of $A$ so that $A$ is free over $Z$ and the image of $\tr$ is in $Z$. Suppose that $X=\Spec Z$ is an affine normal $\kk$-variety. Let $U$ be an open subset of $X$ such that $X\backslash U$ has codimension 2 in $X$. If if there exists an element $d \in Z$ such that the principal ideal $(d)$ of $Z$ agrees with $d(A/C)$ on $U$, then \cite[Lemma 2.3(2)]{CGWZ1} implies that $d(A/Z)=_{Z^\times} d$. 
In \cite{CGWZ1}, this was called the \emph{reflexive hull discriminant} and defined more generally in the context of the \emph{modified discriminant}. However, freeness implies that the modified discriminant and the discriminant agree. Consequently, the reflexive hull discriminant and the discriminant agree.

One can also define discriminants relative to some (algebraic) property. Let $A$ be an algebra and let $Z=\cnt(A)$. Let $\cP$ be a property defined for $\kk$-algebras that is invariant under algebra isomorphisms. Following \cite{LWZ_Discrim}, we define the \emph{$\cP$-locus} of $A$ to be
\[ L_{\cP}(A):=\{\fm\in\Maxspec(Z)\mid Z_{\fm} \text{ has property } \cP\},\]
and the \emph{$\cP$-discriminant ideal} to be 
\[ I_{\cP}(A):= \bigcap_{\fm\in L_{\cP}(A)} \fm \subseteq Z.\]
By \cite[Lemma 3.5]{LWZ_Discrim}, if $\phi:A\rightarrow B$ is an algebra isomorphism, then 
\[ \phi(L_{\cP}(A))=L_{\cP}(B) \quad\text{and}\quad \phi(I_{\cP}(A))=I_{\cP}(B).\]
Our definition of the $\cP$-locus differs slightly from that of \cite{LWZ_Discrim} in that we study localizations of the center instead of quotients of the algebra. 
This choice reflects the fact that, in the case of $\sqmatwo$, it is both easier and sufficient to determine useful information about the automorphism group using the above definition.

\section{The $2 \times 2$ multi-parameter case}\label{sec.mpn2}

In this section we compute the automorphism group for the $2 \times 2$ quantum matrix algebra in the multi-parameter setting. The relations in $\pmatwo$, explicitly, are
\begin{align*}
x_{12} x_{11} &= p_{12} x_{11}x_{12} & 
x_{21} x_{11} &= (\lambda p_{21}) x_{11}x_{21} \\
x_{22} x_{12} &= (\lambda p_{21}) x_{12} x_{22} &
x_{22} x_{21} &= p_{12} x_{21} x_{22} \\	
x_{21} x_{12} &= (\lambda p_{21}) p_{21} x_{12}x_{21} &
x_{22} x_{11} &= x_{11}x_{22} + (\lambda-1)p_{21} x_{12}x_{21}.
\end{align*}

The subalgebra $R$ of $\pmatwo$ generated by $\{x_{11},x_{12},x_{21}\}$ is a quantum affine $3$-space. Then $\pmatwo=R[x_{22};\sigma,\delta]$ where $\sigma$ is an automorphism of $R$ and $\delta$ a $\sigma$-derivation of $R$, and these are determined by
\begin{align*}
\sigma(x_{11}) &= x_{11}	&	\sigma(x_{12}) &= \lambda p_{21} x_{12}	& \sigma(x_{21}) &= p_{12} x_{21} \\
\delta(x_{11}) &= (\lambda-1)p_{21}x_{12}x_{21}	& \delta(x_{12})&=0	& \delta(x_{21}) &= 0.
\end{align*}
Note that
\[
\sigma(\delta(x_{11})) 
	= \sigma((\lambda-1)p_{21}x_{12}x_{21})
	= \lambda (\lambda-1)p_{21} (x_{12}x_{21})
	= \lambda\inv \delta(x_{11})
	= \lambda\inv \delta(\sigma x_{11}).
\]
So $\delta(\sigma (x_{11}))=\lambda \sigma(\delta(x_{11}))$. This holds trivially with $x_{11}$ replaced by $x_{12}$ or $x_{21}$, and so $(\sigma,\delta)$ is a (left) $\lambda$-skew derivation of $R$.

We will assume the following hypothesis throughout this section.

\begin{hypothesis}\label{hyp.gcd2}
Let $M=\pmatwo$ as above where $p_{12}$ and $\lambda p_{12}$ are roots of unity such that $\ord(p_{12})$ and $\ord(\lambda p_{21})$ are relatively prime. We set $\ell=\lcm(\ord(p_{12}),\ord(\lambda p_{21}))$ and let $y_{ij}=x_{ij}^\ell$ for $i,j \in \{1,2\}$. Throughout, let $Z=\cnt(M)$ and let $D=D_{\lambda,\bp}$.
\end{hypothesis}

\begin{lemma}\label{lem.center}
Assume Hypothesis \ref{hyp.gcd2}. Then $Z=\kk[y_{11},y_{12},y_{21},y_{22}]$.
\end{lemma}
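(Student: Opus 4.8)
The containment $C \subseteq Z$, where $C := \kk[y_{11},y_{12},y_{21},y_{22}]$, together with the freeness of $M$ over $C$ on the monomial basis $\{\prod x_{ij}^{r_{ij}} \mid 0 \le r_{ij} < \ell\}$ of rank $\ell^4$, is already recorded in the proof of the proposition in Section \ref{sec.back} (taking the integer $m$ there to be $\ell$, which is legitimate under Hypothesis \ref{hyp.gcd2}). So the plan is to prove the reverse inclusion $Z \subseteq C$. Since $M$ is a finite free $C$-module, every element of $Z \subseteq M$ is integral over $C$, and $C$, being a polynomial ring, is integrally closed in its fraction field. Thus it suffices to show $\Frac Z = \Frac C$. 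In the tower $\Frac C \subseteq \Frac Z \subseteq \Frac M$ we have $[\Frac M : \Frac C] = \ell^4$ (from freeness of rank $\ell^4$) and $[\Frac M : \Frac Z] = \PIdeg(M)^2$, so the lemma reduces to the single computation $\PIdeg(M) = \ell^2$, which forces $[\Frac Z : \Frac C] = 1$.

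To compute the PI degree I would exhibit $\Frac M$ as the quotient division ring of an explicit quantum affine space, as in the proof of that proposition. Write $s = p_{12}$ and $t = \lambda p_{21}$, so $\ord(s)$ and $\ord(t)$ are coprime with product $\ell$. The subalgebra $R = \langle x_{11},x_{12},x_{21}\rangle$ is a quantum affine $3$-space, and by \eqref{eq.normal} the quantum determinant $D$ is a normal element with $Dx_{11} = x_{11}D$, $Dx_{12} = (ts^{-1})x_{12}D$, and $Dx_{21} = (st^{-1})x_{21}D$. Since $x_{22} = x_{11}^{-1}(D + s^{-1}x_{12}x_{21})$ in $\Frac M$, the elements $x_{11},x_{12},x_{21},D$ pairwise skew-commute and generate, after localization, a quantum torus $T$ with $\Frac T = \Frac M$; in particular $\PIdeg(M) = \PIdeg(T)$.

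Fix a primitive $\ell$-th root of unity $\zeta$, write $s = \zeta^\sigma$ and $t = \zeta^\tau$, and record the skew-commutation data in the antisymmetric exponent matrix $H = (h_{ij})$ of $T$ in the ordered variables $(x_{11},x_{12},x_{21},D)$. The standard quantum-torus formula gives $\PIdeg(T)^2 = \ell^4/\lvert\ker \bar H\rvert$, where $\bar H$ is the reduction of $H$ modulo $\ell$ acting on $(\ZZ/\ell)^4$. A direct computation yields $\mathrm{Pf}(H) = \sigma^2 - \tau^2 = (\sigma-\tau)(\sigma+\tau)$, so $\det H = \mathrm{Pf}(H)^2$. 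The coprimality hypothesis says precisely that $\ord(s) = \ell/\gcd(\sigma,\ell)$ and $\ord(t) = \ell/\gcd(\tau,\ell)$ are coprime; a short number-theoretic check then shows $\gcd(\sigma \pm \tau,\ell) = 1$. Hence $\det H$ is a unit modulo $\ell$, $\bar H$ is bijective, $\ker \bar H = 0$, and $\PIdeg(M) = \ell^2$, completing the reduction.

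The main obstacle is this PI-degree computation: one must correctly identify the quantum-torus presentation of $\Frac M$ (handling the off-diagonal derivation term via the substitution $x_{22} = x_{11}^{-1}(D + s^{-1}x_{12}x_{21})$) and then verify that the coprimality of $\ord(p_{12})$ and $\ord(\lambda p_{21})$ is exactly what makes the Pfaffian $\sigma^2 - \tau^2$ invertible modulo $\ell$. Everything else, namely the integral-closure reduction and the field-degree bookkeeping, is routine.
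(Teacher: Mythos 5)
Your argument is correct, but it is a genuinely different route from the paper's. The paper proves $Z\subseteq\kk[y_{11},y_{12},y_{21},y_{22}]$ by brute force: it writes an arbitrary central element in the PBW basis, extracts the leading term with respect to an ordering on $x_{11},x_{22}$, and uses the commutators with each generator to force every exponent to be divisible by $\ell$ (the coprimality of $\ord(p_{12})$ and $\ord(\lambda p_{21})$ enters exactly when the vanishing of $1-p_{12}^i(\lambda p_{21})^j$ and $(\lambda p_{21})^ip_{12}^j-1$ must force $\ell\mid i$ and $\ell\mid j$). You instead reduce everything to the single identity $\PIdeg(M)=\ell^2$ via integrality over the polynomial subring $C$ and the degree count $\ell^4=[\Frac Z:\Frac C]\cdot\PIdeg(M)^2$, and then compute the PI degree from the quantum-torus presentation on $x_{11},x_{12},x_{21},D$ (which is essentially the content of the paper's Lemma \ref{lem.local2}, proved independently of this lemma, so there is no circularity). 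I checked your key computations: with $D=x_{11}x_{22}-p_{21}x_{12}x_{21}$ the four elements do pairwise skew-commute with the exponents you list, the Pfaffian of the resulting $4\times4$ matrix is $\sigma^2-\tau^2$, and the coprimality of $\ell/\gcd(\sigma,\ell)$ and $\ell/\gcd(\tau,\ell)$ does force every prime divisor of $\ell$ to divide exactly one of $\sigma,\tau$, hence $\gcd(\sigma\pm\tau,\ell)=1$. The trade-off: the paper's proof is elementary and self-contained, needing nothing beyond the defining relations; yours leans on Posner's theorem and the De Concini--Procesi quantum-torus PI-degree formula, but in exchange it isolates where the coprimality hypothesis lives (invertibility of the Pfaffian mod $\ell$), produces $\PIdeg(M)=\ell^2$ and hence $\rank_Z M=\ell^4$ as a byproduct (which is implicitly what makes the discriminant computation in Lemma \ref{lem.n2disc} go through), and would scale to larger $n$ as a determinant computation mod $\ell$ rather than an increasingly unwieldy commutator analysis.
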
 
\begin{proof}
It is easy to show that $x_{12}^\ell$ and $x_{21}^\ell$ are central. To check $x_{22}^\ell$ is central, we apply \cite[Lemma 6.2]{G}. Note that $\delta^2(x_{11})=0$, and so we have
\[ x_{22}^\ell x_{11} = x_{11}x_{22}^\ell + [\ell]_\lambda \sigma^{\ell-1} \delta(x_{11}) x_{22}^{\ell-1} = x_{11}x_{22}^\ell\]
where the last equality holds because $\ord(\lambda)$ divides $\ell$. A similar proof shows that $x_{11}^\ell$ is central.

On the other hand, suppose $z \in Z$ and write
$z = \sum f_{ij} x_{11}^i x_{22}^j$ where each $f_{ij}$ is an element in the subalgebra generated by $x_{12}$ and $x_{21}$. Let $f_{uv}x_{11}^u x_{22}^v$ be the element of highest degree in $z$ according to the ordering $x_{22} > x_{11}$. Hence,
\[ 0 = [x_{11},z] = [x_{11}, f_{uv}] x_{11}^ux_{22}^v + \text{(lower degree terms in $x_{22}$)}.\]
Thus $[x_{11}, f_{uv}]=0$ and similarly $[f_{uv},x_{22}]=0$. Write $f_{uv} = \sum \gamma_{ij} x_{12}^i x_{21}^j$ with $\gamma_{ij} \in \kk$. Then
\begin{align*}
0 &= [x_{11}, f_{uv}] 
	= x_{11} \sum \gamma_{ij} (1-p_{12}^i (\lambda p_{21})^j) x_{12}^i x_{21}^j \\
0 &= [f_{uv}, x_{22}]
	= \sum \gamma_{ij} ( (\lambda p_{21})^i p_{12}^j - 1) x_{12}^ix_{21}^j x_{22}.
\end{align*}
By our hypothesis, for every $i,j$ with $\gamma_{ij} \neq 0$, the first equation gives that $p_{12}^i=1$ and $(\lambda p_{21})^j=1$. Similarly, the second equation gives $(\lambda p_{21})^i=1$ and $p_{12}^j=1$. Thus, for every $i$ with $\gamma_{ij} \neq 0$, $\ord(p_{12})$ and $\ord(\lambda p_{21})$ divide $i$, so $\ell \mid i$. Similarly, $\ell \mid j$.
That is, $f_{uv}$ is a polynomial in $x_{12}^\ell$ and $x_{21}^\ell$. Now,
\begin{align*}
0 &= [x_{12}, f_{uv}x_{11}^u x_{22}^v] = (p_{12}^u - (\lambda p_{21})^v)f_{uv}x_{11}^u x_{12} x_{22}^v \\
0 &= [x_{21}, f_{uv}x_{11}^u x_{22}^v] = ( (\lambda p_{21})^u - p_{12}^v)f_{uv}x_{11}^u x_{21} x_{22}^v.
\end{align*}
A similar argument to the above shows that $\ell \mid u$ and $\ell \mid v$.
\end{proof}

Our strategy will then be the following. We will consider a localization of $M$ which is isomorphic to a localization of a quantum affine space. The discriminant of this quantum affine space over its center is known (see \cite{CPWZ2}). We compute this on ``enough" localizations, then patch them together using the method discussed in Section \ref{sec.back}. We remark that using localizations to compute discriminants has also been employed in \cite{CYZ1,LY}.

\begin{lemma}\label{lem.local2}
Assume Hypothesis \ref{hyp.gcd2}. Let $Y=\{ y_{11}^k : k \geq 0\}$. Then $Y$ is an Ore set in $M$ and $MY\inv = M[y_{11}^{-1}]$ is isomorphic to a localization of a quantum affine space.
\end{lemma}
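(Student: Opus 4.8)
The plan is to reduce $M[y_{11}\inv]$ to a localized quantum affine space by absorbing the $\sigma$-derivation $\delta$ into a change of the fourth generator.

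First I would settle the Ore condition. Since $M=R[x_{22};\sigma,\delta]$ is an iterated Ore extension of $\kk$, with $R$ the quantum affine $3$-space generated by $x_{11},x_{12},x_{21}$, it is a Noetherian domain; in particular $y_{11}=x_{11}^\ell$ is a nonzero, hence regular, element. By Lemma~\ref{lem.center}, $y_{11}$ is central. A multiplicatively closed set of central regular elements is automatically a (two-sided) Ore set, so $Y$ is an Ore set and $MY\inv=M[y_{11}\inv]$ exists. Because $y_{11}=x_{11}^\ell$, inverting $y_{11}$ inverts $x_{11}$ via $x_{11}\inv=x_{11}^{\ell-1}y_{11}\inv$ and conversely, so $M[y_{11}\inv]=M[x_{11}\inv]$.

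Next I would show that after inverting $x_{11}$ the $\sigma$-derivation $\delta$ becomes inner. The candidate is $a=p_{21}x_{11}\inv x_{12}x_{21}\in M[x_{11}\inv]$, and I would verify $\delta(r)=ar-\sigma(r)a$ for each generator $r\in\{x_{11},x_{12},x_{21}\}$ of $R$; since both $\delta$ and $r\mapsto ar-\sigma(r)a$ are $\sigma$-derivations, checking these three cases suffices. The relation for $x_{11}$ pins down the scalar $p_{21}$ (using $x_{11}\inv x_{12}x_{21}x_{11}=\lambda x_{12}x_{21}$, which follows from the defining relations of $R$), and the relations for $x_{12},x_{21}$ then hold because $\delta$ vanishes on them.

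With $\delta$ inner, set $w=x_{22}-a$. Then $wr=\sigma(r)w$ for all $r\in R$, so $w$ skew-commutes with each generator: explicitly $wx_{11}=x_{11}w$, $wx_{12}=\lambda p_{21}x_{12}w$, and $wx_{21}=p_{12}x_{21}w$. Hence the subalgebra $\cO_\Gamma(\kk^4):=\kk\langle x_{11},x_{12},x_{21},w\rangle$ is a quantum affine $4$-space, where $\Gamma$ records these scalars together with those of $R$. Finally, since $x_{22}=w+a$ lies in $\cO_\Gamma(\kk^4)[x_{11}\inv]$, all four original generators and $x_{11}\inv$ lie in this ring, giving $M[x_{11}\inv]=\cO_\Gamma(\kk^4)[x_{11}\inv]$, a localization of a quantum affine space. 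The only genuine obstacle is the verification that $\delta$ is inner with the stated $a$; this is a direct but slightly delicate computation in $R[x_{11}\inv]$ using the three defining relations of $R$, and everything else is formal.
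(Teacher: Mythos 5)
Your proof is correct and follows essentially the same route as the paper's: invert $y_{11}$ (equivalently $x_{11}$), exhibit $\delta$ as the inner $\sigma$-derivation given by $\omega=p_{21}x_{11}\inv x_{12}x_{21}$, and replace $x_{22}$ by $x_{22}-\omega$ to realize $M[y_{11}\inv]$ as a localization of a quantum affine $4$-space. The only difference is presentational (you justify the Ore condition via centrality of $y_{11}$ and reduce the innerness check to generators rather than writing out all three computations), and your sketched computation for $x_{11}$ matches the paper's.
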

\begin{proof}
It is clear that $Y$ is an Ore set in $M$. Recall that $R$ is the subalgebra of $M$ generated by $\{x_{11},x_{12},x_{21}\}$. Note that $Y$ may also be regarded as an Ore set in $R$. Let $R'=RY\inv=R[y_{11}\inv]$. The automorphism $\sigma$ and the $\sigma$-derivation $\delta$ introduced above have a unique extension to $R'$ which, by an abuse of notation, we also call $\sigma$ and $\delta$, respectively. Hence we have
\[M' = (R[x_{22};\sigma,\delta])Y\inv = (R')[x_{22};\sigma,\delta].\]

The $\sigma$-derivation $\delta$ is inner if there exists $\omega$ such that $\delta(r)=\omega r - \sigma(r)\omega$ for all $r \in R'$. Note that $x_{11}$ is a unit in $R'$ (since $x_{11} (x_{11}^{m-1} (x_{11}^m)\inv ) = 1$). 
Set $\omega = p_{21} x_{11}\inv x_{12}x_{21} \in R'$. Then we have
\begin{align*}
\omega x_{11} - \sigma(x_{11})\omega 
	&= (p_{21} x_{11}\inv x_{12}x_{21})x_{11} - x_{11}(p_{21} x_{11}\inv x_{12}x_{21}) \\
	&= p_{21} (\lambda-1) x_{12}x_{21} = \delta(x_{11}) \\
\omega x_{12} - \sigma(x_{12})\omega
	&= (p_{21} x_{11}\inv x_{12}x_{21})x_{12} - \lambda p_{21} x_{12}(p_{21} x_{11}\inv x_{12}x_{21}) \\
	&= p_{21}  (\lambda p_{21}^2- (\lambda p_{21})p_{21}) (x_{11}\inv x_{12}^2x_{21}) = 0 \\
\omega x_{21} - \sigma(x_{21})\omega
	&=  (p_{21} x_{11}\inv x_{12}x_{21})x_{21} - p_{12} x_{21}(p_{21} x_{11}\inv x_{12}x_{21}) \\
	&= p_{21} (1-p_{12}(\lambda\inv p_{12})(\lambda p_{21}^2)) (x_{11}\inv x_{12}x_{21}^2)x_{21} = 0.
\end{align*}
It follows that for all $r \in R$,
\[ x_{22} r 
	= \sigma(r) x_{22} + \delta(r) 
	= \sigma(r) x_{22} + \omega r - \sigma(r)\omega 
\Rightarrow (x_{22}-\omega)r = \sigma(r) (x_{22}-\omega).\]
Thus, $M' = R'[x_{22};\sigma,\delta] =R'[x_{22}-\omega;\sigma]$.
\end{proof}

Next we compute the discriminant of $M$ over $Z$, which agrees with the formula for the single-parameter quantum matrix algebra case considered in \cite{NTY}. 

\begin{lemma}\label{lem.n2disc} 
Assume Hypothesis \ref{hyp.gcd2}. Let 
$\Omega := D^\ell = y_{11}y_{22}-y_{12}y_{21}$. Then
\[ d(M/Z)=_{\kk^\times} \left( x_{12}x_{21} D \right)^{\ell^4(\ell-1)}=_{\kk^\times} \left( y_{12}y_{21} \Omega \right)^{\ell^3(\ell-1)}.\]
\end{lemma}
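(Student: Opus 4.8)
The plan is to compute the discriminant on a suitable open set where $M$ becomes free over its center in an explicit way, and then patch the local computations together using the reflexive-hull-discriminant technique recalled in Section~\ref{sec.back}. First I would exploit Lemma~\ref{lem.local2}: after inverting $y_{11}$, we have $M' = R'[x_{22}-\omega;\sigma]$, where $R' = R[y_{11}\inv]$ is a localization of the quantum affine $3$-space $R$. Since adjoining the variable $x_{22}-\omega$ as a skew polynomial extension with automorphism $\sigma$ turns $M'$ into (a localization of) a quantum affine $4$-space in the variables $x_{11}, x_{12}, x_{21}$ and $t := x_{22}-\omega$, I can read off the mixing parameters among these four generators and apply the known discriminant formula for PI quantum affine spaces from \cite{CPWZ2}. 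That formula expresses $d$ as a product of the central generators raised to the power $\ell^4(\ell-1)$, weighted according to how nontrivially each generator skew-commutes with the others; the rank over the center is $w = \ell^4$, which fixes the global exponent.

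The key computation is then to identify which of the four generators contribute nontrivially. From the relations in $M$ one sees that $x_{12}$ and $x_{21}$ skew-commute nontrivially with the other generators (governed by $p_{12}$ and $\lambda p_{21}$, whose orders are $\ell$ by Hypothesis~\ref{hyp.gcd2}), whereas $x_{11}$ is central after inverting $y_{11}$ and must be handled by the normal element $D$. The cleanest route is to express the answer in terms of the normal element $D = D_{\lambda,\bp}$: since $\Omega = D^\ell = y_{11}y_{22}-y_{12}y_{21}$ is central and the normalization relation \eqref{eq.normal} controls how $D$ interacts with each $x_{ij}$, I expect the local discriminant on $M[y_{11}\inv]$ to equal $(x_{12}x_{21}D)^{\ell^4(\ell-1)}$ up to a unit, i.e. up to a power of $y_{11}$. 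Rewriting $x_{12}^{\ell}=y_{12}$, $x_{21}^{\ell}=y_{21}$, and $D^\ell=\Omega$ converts the exponent from $\ell^4(\ell-1)$ to $\ell^3(\ell-1)$, giving the second displayed expression.

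To globalize, I would run the same computation after inverting $y_{22}$ (by the symmetry $x_{11}\leftrightarrow x_{22}$ of the defining relations) to obtain the discriminant on $M[y_{22}\inv]$, and observe the two formulas agree on the overlap up to units in $Z$. Since $Z=\kk[y_{11},y_{12},y_{21},y_{22}]$ by Lemma~\ref{lem.center} is a polynomial ring, $X=\Spec Z$ is normal, and the complement of $U = \{y_{11}\neq 0\}\cup\{y_{22}\neq 0\}$ is cut out by $y_{11}=y_{22}=0$, hence has codimension $2$. Therefore \cite[Lemma 2.3(2)]{CGWZ1}, as recalled in Section~\ref{sec.back}, lets me conclude that the candidate element $(y_{12}y_{21}\Omega)^{\ell^3(\ell-1)}$ generates $d(M/Z)$ on all of $X$, i.e. $d(M/Z)=_{Z^\times}(y_{12}y_{21}\Omega)^{\ell^3(\ell-1)}$. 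The main obstacle I anticipate is the bookkeeping in the first step: correctly computing the skew-commutation parameters among $x_{11},x_{12},x_{21},t$ in the twisted variable $t=x_{22}-\omega$, verifying that the $x_{11}$-contribution is exactly absorbed into the factor $D$ via \eqref{eq.normal}, and tracking the precise power of $y_{11}$ (a unit on this locus) so that the two local formulas genuinely match on the overlap rather than merely agreeing up to an unaccounted scalar.
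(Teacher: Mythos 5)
Your proposal follows essentially the same route as the paper's proof: localize at $y_{11}$ (and, symmetrically, at $y_{22}$) via Lemma~\ref{lem.local2}, apply the quantum affine space discriminant formula of \cite[Proposition 2.8]{CPWZ2} to the resulting four-generator skew polynomial ring, and patch the two local answers over the codimension-$2$ complement using \cite{CGWZ1}. The only detail worth making explicit is that the product of the four generators is literally $x_{12}x_{21}D$ up to a nonzero scalar, since $x_{11}(x_{22}-\omega)=x_{11}x_{22}-p_{21}x_{12}x_{21}=D$, so no stray power of $y_{11}$ needs to be tracked.
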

\begin{proof}
Let $S=\kk_\bq[x_1,x_2,x_3,x_4]$ be the quantum affine space whose localization is isomorphic to $M'=M[y_{11}\inv]$ by Lemma \ref{lem.local2}. Here, $x_1,x_2,x_3,x_4$ represent the images of $x_{11}, x_{12}, x_{21}, x_{22}-\omega$, respectively, in $S$. Note that $S$ is free over the central subalgebra $C=\kk[x_1^\ell,x_2^\ell,x_3^\ell,x_4^\ell]$ and so by \cite[Proposition 2.8]{CPWZ2}, 
\[ d(S/C)=_{\kk^\times} (x_1 x_2 x_3 x_4)^{\ell^4(\ell-1)}.\]
Set $S'=S[y_1\inv]$ and let $C'=C[y_1\inv]$. Then $d(S'/C') =_{(C')^\times} d(S/C)$ by \cite[Lemma 1.3]{CYZ1}. Recall that  $M' \iso S'$. Set $Z'=Z[y_{11}\inv]$. Then
\[ d(M'/Z') =_{(Z')^\times} \left( x_{12}x_{21} D \right)^{\ell^4(\ell-1)}.\]

By a completely symmetric argument, localizing at powers of $y_{22}$ in place of $y_{11}$ gives the same result. It is left to show that this agrees with $d(M/Z)$. 

Clearly $M$ is a prime $\kk$-algebra and $X=\Spec Z$ is an affine $\kk$-variety. Thus, $(M,Z)$ satisfies \cite[Hypothesis 2.1]{CGWZ1}. Let $U_1$ and $U_2$ be the open subsets of $X$ with $y_{11} \neq 0$ and $y_{22} \neq 0$, respectively. Let $I$ be the ideal in $Z$ generated by $y_{11}$ and $y_{22}$. Then we have
\begin{align*}
\codim(X \backslash (U_1 \cup U_2) )
	&= \codim( (X\backslash U_1) \cap (X \backslash U_2) ) 
	= \codim( V(y_{11}) \cap V(y_{22}) ) \\
	&= \codim( V(y_{11},y_{22}) )
	= \codim(I) = \height(I) = 2.
\end{align*}
Thus, by \cite{CGWZ1}, the discriminant on $U_1 \cup U_2$ extends to a discriminant over $X$.
\end{proof}

We conclude this section by computing the automorphism group of $M$.

\begin{theorem}\label{thm.n2auto}
Assume Hypothesis \ref{hyp.gcd2}. Then $\Aut(M) = (\kk^\times)^3 \rtimes \{\tau\}$.
\end{theorem}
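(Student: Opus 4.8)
The plan is to use the noncommutative discriminant computed in Lemma~\ref{lem.n2disc} as a rigid invariant that constrains any automorphism $\phi \in \Aut(M)$. Since every automorphism preserves the center $Z$, and by Lemma~\ref{lem.center} we know $Z = \kk[y_{11},y_{12},y_{21},y_{22}]$ is a polynomial ring, $\phi$ restricts to an automorphism $\phi|_Z$ of $Z$ that must send the discriminant $d(M/Z) =_{\kk^\times} (y_{12}y_{21}\Omega)^{\ell^3(\ell-1)}$ to an associate of itself. First I would analyze the factorization of this discriminant: the irreducible factors are $y_{12}$, $y_{21}$, and $\Omega = y_{11}y_{22}-y_{12}y_{21}$, each appearing to the same power. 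Because $\phi$ is a graded-preserving operation on the factorization, $\phi$ must permute the set of principal ideals $\{(y_{12}), (y_{21}), (\Omega)\}$, up to units. This is the key leverage point.

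Next I would extract the degree constraints. The factors $y_{12}, y_{21}$ are degree-$1$ in the grading on $Z$ induced from $M$ (where each $y_{ij}$ has degree $\ell$ in $M$, hence a single degree in $Z$), whereas $\Omega$ is a degree-$2$ element of $Z$. A unit-scalar automorphism of a polynomial ring cannot send a degree-$1$ element to an associate of a degree-$2$ element, so $\phi|_Z$ cannot map $(y_{12})$ or $(y_{21})$ to $(\Omega)$; thus $\Omega \mapsto$ an associate of $\Omega$, and $\{(y_{12}),(y_{21})\}$ is permuted among itself. This forces $\phi(y_{12}) = \alpha y_{12}$ or $\alpha y_{21}$ and similarly for $y_{21}$, for scalars in $\kk^\times$. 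The condition that $\Omega = y_{11}y_{22}-y_{12}y_{21}$ is preserved up to scalar then pins down the action on $y_{11}$ and $y_{22}$ as well, yielding that $\phi|_Z$ is diagonal (the identity-type permutation, giving the torus $(\kk^\times)^3$ after accounting for the one relation among the four scaling parameters imposed by preserving $\Omega$) or the transposition swapping $y_{12} \leftrightarrow y_{21}$ and $y_{11} \leftrightarrow y_{22}$, which lifts to $\tau$.

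Having determined $\phi|_Z$, I would lift the result to $M$ itself. The generators $x_{ij}$ satisfy $x_{ij}^\ell = y_{ij} \in Z$, so knowing $\phi(y_{ij})$ up to scalar restricts $\phi(x_{ij})$ to a scalar multiple of $x_{ij}$ (respectively $x_{ji}$ in the transposition case), because $M$ is a domain and taking $\ell$-th roots of a monomial is rigid in this graded setting. One must check that such a map respects the defining relations of $\pmatwo$; here Hypothesis~\ref{hyp.gcd2} (coprimality of $\ord(p_{12})$ and $\ord(\lambda p_{21})$) and the explicit relations ensure that the scalar automorphisms form exactly $(\kk^\times)^3$ (three free parameters after the relations constrain a fourth) and that $\tau$ is a genuine algebra automorphism with $\tau^2 = \id$. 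Assembling these shows every automorphism is graded and lies in $(\kk^\times)^3 \rtimes \{\tau\}$, while the reverse containment is immediate by direct verification.

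The main obstacle I anticipate is the lifting step: passing from the tightly-constrained action on $Z$ to the action on the noncommutative generators $x_{ij}$. Degree-$1$ components of $M$ need not map to degree-$1$ components a priori, so I must rule out that $\phi$ sends $x_{ij}$ to a more complicated element whose $\ell$-th power happens to be $\alpha y_{ij}$; this requires using the graded structure of $M$ together with the fact that $x_{ij}$ generate and that the discriminant argument already forces graded behavior on $Z$. Verifying that no "mixed" or non-graded lift survives the relations is where the real work lies, and it is precisely the content that makes the theorem nontrivial beyond the discriminant computation.
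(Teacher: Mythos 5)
Your overall strategy --- use unique factorization in the polynomial ring $Z$ to constrain $\phi|_Z$ via the factorization $d(M/Z)=_{\kk^\times}(y_{12}y_{21}\Omega)^{\ell^3(\ell-1)}$, then lift to the $x_{ij}$ --- is genuinely different from the paper's. The paper instead shows $d(M/Z)$ is \emph{locally dominating} (as in \cite[Proposition 5.10]{NTY}), invokes \cite[Theorem 2.7]{CPWZ1} to conclude $\Aut(M)$ is affine and hence graded, and only then does linear algebra with the degree-one normal elements $\alpha_1x_{12}+\alpha_2x_{21}$ and the defining relations. Your route avoids the dominating-discriminant machinery, but as written it has a real gap at its pivotal step. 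You rule out $\phi((y_{12}))=(\Omega)$ by a degree count, asserting that an automorphism of $Z$ ``cannot send a degree-$1$ element to an associate of a degree-$2$ element.'' That principle is false: automorphisms of polynomial rings do not preserve degree (in $\kk[x,y]$ the map $x\mapsto x+y^2$, $y\mapsto y$ sends a degree-one coordinate to a degree-two irreducible), and the phrase ``$\phi$ is a graded-preserving operation'' assumes exactly the gradedness you are trying to establish. The step is salvageable, but by a different argument: since units of $Z$ are scalars, $\phi((y_{12}))=(\Omega)$ would force an isomorphism $Z/(y_{12})\cong Z/(\Omega)$, which is impossible because the first is a polynomial ring (regular) while the second is the quadric cone $y_{11}y_{22}=y_{12}y_{21}$, singular at the origin. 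Equivalently, $\Omega$ is not a coordinate of $Z$.

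The second place needing real work is the lift, which you flag but do not execute. The claim that ``taking $\ell$-th roots of a monomial is rigid'' can be made precise, but it requires an argument: if $u=\phi(x_{12})$ satisfies $u^\ell=\alpha x_{12}^\ell$, then because $M$ is a connected graded domain the lowest and highest homogeneous components of $u^\ell$ are the $\ell$-th powers of those of $u$, forcing $u$ to be homogeneous of degree one; one then writes $u=ax_{11}+bx_{12}+cx_{21}+dx_{22}$ and extracts coefficients of $x_{11}^\ell$, $x_{22}^\ell$, $x_{21}^\ell$ in a PBW basis (noting the rewriting rules never create pure powers of a single generator from mixed words) to kill $a,d,c$. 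A parallel argument, after first showing $\phi(\Omega)=\alpha\beta\,\Omega$ (else $\phi(y_{11})\phi(y_{22})$ would equal an irreducible rank-four quadric), handles $x_{11}$ and $x_{22}$. With these two repairs your proof goes through and yields the same group $(\kk^\times)^3\rtimes\{\tau\}$; what the paper's route buys is that the ``affine, hence graded'' step is quoted from the literature, so all the remaining work is elementary manipulation of the relations rather than the root-extraction and singularity arguments your approach requires.
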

\begin{proof}
The same argument as in \cite[Proposition 5.10]{NTY} shows that $d(M/Z)$ is locally dominating. Now by \cite[Theorem 2.7]{CPWZ1}, $\Aut(M)$ is affine. It follows easily that $\Aut(M)$ is graded (see \cite[Proposition 4.2]{LL2}). That is, $\Aut(M)=\Aut_{\gr}(M)$.

Let $\phi \in \Aut(M)$. By \cite[Theorem 3.4]{Giso2}, the (homogeneous) degree one normal elements of $M$ are of the form $\alpha_1x_{12}+\alpha_2x_{21}$ for $\alpha_i \in \kk$. By the same argument as in \cite[Lemma 5.5]{Giso2}, the ideals $(x_{12})$ and $(x_{21})$ are either fixed or swapped by any automorphism. As the automorphism $\tau$ swaps them, then up to conjugation by $\tau$ we may assume $\phi$ fixes these ideals. Hence $\phi(x_{12})=b x_{12}$ and $\phi(x_{21})=c x_{21}$ for some $b,c \in \kk^\times$. Write,
\begin{align*}
    \phi(x_{11}) &= a_1 x_{11} + a_2 x_{12} + a_3 x_{21} + a_4 x_{22} \\
    \phi(x_{22}) &= d_1 x_{11} + d_2 x_{12} + d_3 x_{21} + d_4 x_{22},
\end{align*}
for some $a_i,d_i \in \kk$. Then
\begin{align*}
0   &= \phi(x_{12}x_{11}-p_{12}x_{11}x_{12}) \\
    &= b (a_2(1-p_{12})x_{12}^2 + a_3p_{12}(1-\lambda p_{21})x_{12}x_{21} + a_4(1-p_{21}^2)x_{12}x_{22}).
\end{align*}
Hence, $a_2=a_3=0$. Now 
\[ 0 = \phi(x_{21}x_{11}-\lambda p_{21}x_{11}x_{21})
    = ca_4(1-\lambda)x_{12}x_{22}.\]
Since we assume that $\lambda \neq 1$, this implies that $a_4=0$. Similarly one shows that $d_1=d_2=d_3=0$. Set $a=a_1$ and $d=d_1$. Then we have
\begin{align*}
bc(\lambda-1)p_{21}x_{12}x_{21}
    &= \phi(x_{22}x_{11}-x_{11}x_{22})
    = (dx_{22})(ax_{11}) - (ax_{11})(dx_{22}) \\
    &= ad (\lambda-1)p_{21}x_{12}x_{21}.
\end{align*}

Thus, we let $H$ denote the automorphisms of $M$ satisfying $\phi(x_{11})=a x_{11}$, $\phi(x_{12})=b x_{12}$, $\phi(x_{21})=c x_{21}$, and $\phi(x_{22})=d x_{22}$ satisfying $ad=bc$. Then $\Aut(M)=H \rtimes \{\tau\}$.
\end{proof}

\section{The center of $\sqman$}\label{sec.spcnt}

From here on, we study single-parameter quantum matrix algebras $\sqman$ at roots of unity. Recall that $\sqman$ is generated by $\{x_{ij}\}_{1\leq i,j\leq n}$ subject to the relations
\[ x_{ij}x_{kl} = 	
\begin{cases}
	x_{kl}x_{ij}+(q-q^{-1})x_{il}x_{kj}
	    & k > i, l > j \\
	qx_{kl}x_{ij} 
		& k > i, l = j \\
	qx_{kl}x_{ij}
	    & k = i, l > j \\
	x_{ij}x_{lm} 
		& k > i, l < j.			
\end{cases}\]
In the generic case, the center of $\sqman$ is $\kk[D_q]$ (\cite[Theorem 1.6]{NYM}, \cite{RTF}). 
Our interest is in the non-generic case. For most of this section we work under the following hypothesis.

\begin{hypothesis}\label{hyp.sqman}
Assume that $q$ is a root of unity such that $\ord(q)=m \geq 3$ is odd. Let $Z=\cnt(\sqman)$.
\end{hypothesis}

Our goal will be to give a full presentation for $Z$ in the setting of Hypothesis \ref{hyp.sqman}. Recall that  $\tau$ is the automorphism of $\sqman$ defined by $x_{ij}\mapsto x_{ji}$ and that the notation $D(t)$ was introduced in \eqref{eq.tdet}. 
Assuming Hypothesis \ref{hyp.sqman}, 
a result of Jakobsen and Zhang \cite[Theorem 6.2]{JZ1} shows that $Z$ is generated by
\begin{align}\label{eq.center_gens}
    \{x_{ij}^m,D_q,\detq(t)^r&\tau(\detq(n-t)^{m-r})\mid
    i,j,t=1,\ldots,n \text{ and } r=0,\ldots,m\}.
\end{align}
Jakobsen and Zhang also computed a generating set in the case of $m$ even. However, as it is significantly more complicated, we do not consider that problem here.

We begin in earnest by setting up some notation, and proving a technical lemma relating quantum minors. For a positive integer $N$, we use the notation $[N]=\{1,\hdots,N\}$. We will rely on some results of Scott \cite{Scott}. In the notation of that paper, $\detq(t)=\Delta_{[t],[n]-[n-t]}$ for $1 \leq t \leq n-1$. Hence, $\tau(\detq(t))=\Delta_{[n]-[n-t],[t]}$. Given $I,J\subseteq [n]$, we write $I\prec J$ if $i<j$ for all $i\in I$ and $j\in J$.

\begin{lemma}\label{lem.minors_comm}
Let $1\leq t,t'\leq n-1$. Then
\begin{enumerate}
    \item $\detq(t)\tau(\detq(t'))=\tau(\detq(t'))\detq(t)$;
    \item $\detq(t)\detq(t')=\detq(t')\detq(t)$;
\end{enumerate}
\end{lemma}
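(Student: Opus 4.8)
The plan is to deduce both identities from the known quasi-commutation relations among quantum minors established by Scott \cite{Scott}. For any two quantum minors that quasi-commute one has an \emph{exact} relation $\Delta_{I,J}\Delta_{K,L} = q^{e}\,\Delta_{K,L}\Delta_{I,J}$ for a single integer exponent $e$, so the whole problem reduces to (a) checking that the corner minors appearing in the statement fall under Scott's quasi-commuting families, and (b) showing that in each case the exponent $e$ vanishes. For (a) I would record that each of $\detq(t)$, $\detq(t')$, and $\tau(\detq(t'))$ has row- and column-index sets that are \emph{intervals anchored at a corner} of $[n]\times[n]$: both $\detq(t)$ and $\detq(t')$ are top-right corner minors, while $\tau(\detq(t'))$ is a bottom-left corner minor. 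For part (2) the two index pairs are nested (after assuming $t\le t'$ by symmetry), and for part (1) they sit in opposite corners; in both situations the requisite separation hypotheses of \cite{Scott} hold, yielding quasi-commutation.

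The heart of the argument is computing $e$, and here I would read it off from the \emph{leading (diagonal) monomials} of the minors. Writing each minor's diagonal term as the product of the $x_{ij}$ along its main diagonal, the exponent $e$ equals the net power of $q$ produced when the diagonal monomial of one minor is commuted past that of the other, using only the defining relations of $\sqman$. Inspecting those relations shows that a pair of generators $x_{ab}$, $x_{cd}$ contributes to $e$ \emph{only} when it shares a row or a column: a distinct-row, distinct-column pair commutes at leading order (the $(q-q^{-1})$ correction terms are strictly lower), a same-column pair whose left factor sits in the smaller row contributes $+1$, and a same-row pair whose left factor sits in the larger column contributes $-1$.

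The key observation is then a bookkeeping cancellation. In part (2) with $t\le t'$ there are exactly $t$ shared rows and $t$ shared columns between the two diagonals, and in part (1) there are exactly $t+t'-n$ shared rows and $t+t'-n$ shared columns (vacuously zero when $t+t'\le n$). A direct check of the relative positions shows that in the left-hand product every shared-row pair contributes $-1$ while every shared-column pair contributes $+1$, and, since $t,t'\le n-1$ forces $t+t'<2n$, no pair is simultaneously a shared row and a shared column. Hence the row contributions and the column contributions cancel and $e=0$ in every case, which is exactly commutativity.

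The step I expect to be the main obstacle is the passage from leading terms back to the full minors: one must justify that the exponent read off from the diagonal monomials is genuinely the quasi-commutation exponent $e$, i.e. fix a PBW/term order on $\sqman$ for which the diagonal term is the unique leading monomial of each corner minor and the leading term of a product is the product of leading terms. Granting quasi-commutation from \cite{Scott}, this reduces to comparing the coefficient of a single marker monomial on each side, after which the bookkeeping above forces $e=0$. The only genuinely case-sensitive point is the overlapping regime $t+t'>n$ in part (1), where the shared rows and columns are nonempty and the sign analysis must be carried out explicitly.
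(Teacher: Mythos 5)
Your overall strategy for part (1) is the same as the paper's: invoke Scott's quasi-commutation theorem to get $\detq(t)\tau(\detq(t'))=q^{e}\,\tau(\detq(t'))\detq(t)$ and then show $e=0$. Where you diverge is in computing $e$: the paper reads it off from Scott's explicit formula (his Theorem 2) applied to the sets $S([n]-[n-t'],[t'])$ and $S([t],[n]-[n-t])$, whereas you propose to extract it by commuting the diagonal monomials past each other and matching leading coefficients. Your bookkeeping is in fact correct --- in the overlapping regime $t+t'>n$ there are $t+t'-n$ shared-row pairs each contributing $-1$ and $t+t'-n$ shared-column pairs each contributing $+1$, and a pair is simultaneously shared-row and shared-column only if $t+t'=2n$, which is excluded --- so your method would yield $e=0$. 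For part (2) you are working much harder than necessary: since $\detq(t')$ (say $t'\le t$) lies in the corner subalgebra generated by $\{x_{ij}: 1\le i\le t,\ n-t+1\le j\le n\}$, of which $\detq(t)$ is the quantum determinant and hence central, commutativity is immediate with no quasi-commutation input at all.

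That said, two steps in your write-up are genuine gaps rather than routine omissions. First, you assert that ``the requisite separation hypotheses of \cite{Scott} hold'' for the opposite-corner minors in part (1), but verifying weak separation of $I=S([n]-[n-t'],[t'])$ and $J=S([t],[n]-[n-t])$ (exhibiting the decomposition $J-I=J'\sqcup J''$ with $J'\prec I-J\prec J''$, which uses $t'>n-t$) is precisely the content of the paper's argument; without it you have no quasi-commutation relation to pin down, and the leading-term comparison alone does not prove the two products are proportional. Second, the leading-term machinery you flag as ``the main obstacle'' does need to be nailed down: you must fix the lexicographic order with $x_{11}>x_{12}>\cdots>x_{nn}$ (the one used in Proposition \ref{prop.B1_center}), check that the diagonal term is the leading term of each corner minor, and observe that the correction term $x_{il}x_{kj}$ in the relation $x_{ij}x_{kl}=x_{kl}x_{ij}+(q-q^{-1})x_{il}x_{kj}$ is strictly smaller than $x_{ij}x_{kl}$ in this order, so that leading terms multiply. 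Both gaps are fillable, but as written the proof is not complete. (A small slip: your justification ``$t+t'<2n$'' for no pair being simultaneously shared-row and shared-column is the right reason in part (1), but in part (2) the relevant condition is $t\neq t'$, the case $t=t'$ being trivial.)
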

\begin{proof}
(1) This is trivial if $t'\leq n-t$ as then each term of $\detq(t)$ and each term of $\tau(\detq(t'))$ commute, so we may assume $t'>n-t$. Let
\begin{align*}
I &:= \{t'+1,\ldots,n,n+1,\ldots,n+t'\} \text{ and} \\
J &:= \{1,\ldots,n-t,2n-t+1,\ldots,2n\}.
\end{align*}
In the notation of \cite{Scott}, $I=S([n]-[n-t'],[t'])$ and $J=S([t],[n]-[n-t])$. We claim that $I$ and $J$ are weakly separated (\cite[Definition 2]{Scott}).

Since $|I|=|J|=n$ and $t'>n-t$, we have 
\begin{align*}
I-J &= \{t'+1,\ldots,2n-t\}, \text{ and} \\
J-I &= \{1,\ldots,n-t,n-t'+1,\ldots,2n\}=[n-t]\sqcup\{n+t'+1,\ldots,2n\}.
\end{align*}
Denoting $J'=[n-t]$ and $J''=\{n+t'+1,\ldots,2n\}$, we see $J'\prec I-J\prec J''$ once again using $t'>n-t$. Hence by \cite[Theorem 1]{Scott}, $\detq(t)$ and $\tau(\detq(t'))$ skew-commute by a factor of $q^c$ for some $c\in\ZZ$. By \cite[Theorem 2]{Scott}, the integer $c$ can be computed as
\[ c=|J''|-|J'|+\Big|[n]-[n-t']\Big|-\Big|[t]\Big|=n-t'-n+t+t'-t=0.\]
The result follows.

(2) WLOG, assume $t'\leq t$. Since $D_q$ is a central element in each $\sqman$, it follows that $\detq(t)$ is a central element in the subalgebra generated by the $x_{ij}$ with $1\leq i\leq t,n-t+1\leq j\leq n$. In particular, $\detq(t)$ commutes with $\detq(t')$.
\end{proof}

Our next goal is to determine the ideal of relations in $Z$. In order to simplify notation, denote 
\[ 
Z_{ij}:=x_{ij}^m, \quad 
D:=D_q, \quad 
Y_{tr}:=\detq(t)^r\tau(\detq(n-t)^{m-r})
\] 
for $1 \leq i,j,t \leq n$ and $0 \leq r \leq m$. Hence, $Z_{ij},Y_{tr}$, and $D$ generate $Z$ by \eqref{eq.center_gens}. By \cite[Proposition 2.12]{JZ2}, we have the relation $D^m=\det(Z_{ij})$. More generally, $\detq(t)^m=\det(A_t)$ and $\tau(\detq(t))^m=\det(B_t)$, where $A_t=(Z_{ij})_{1\leq i\leq t,n-t+1\leq j\leq n}$ and $B_t=(Z_{ij})_{n-t+1\leq i\leq n,1\leq j\leq t}$, for each $1\leq t\leq n$. Consequently the $Y_{t0}$ and $Y_{tm}$ are superfluous, and since $\detq(n)=D$ and $\detq(0)=1$, we only need the $Y_{tr}$ with $1\leq t\leq n-1$ and $1\leq r\leq m-1$ to generate $Z$. 

We now define a monomial ordering on these elements. Given a word $w$ in the alphabet 
\[ \{Z_{ij},Y_{tr},D\mid 1\leq i,j\leq n,\\ 1\leq t\leq n-1, 1\leq r\leq m-1\},\] 
let $w_Y$ denote the subword consisting of the $Y_{tr}$ in $w$, and let $w_Z$ be the quotient $w/w_Y$. For example, if $w=Y_{11}Y_{21}^2DZ_{12}Z_{13}Z_{14}$, then $w_Y=Y_{11}Y_{21}^2$ and $w_Z=DZ_{12}Z_{13}Z_{14}$. Now, given two such words $w$ and $w'$, we will say $w>w'$ if
\begin{itemize}
    \item $w_Y>_{deglex}w'_Y$ with respect to the ordering $Y_{11}>\cdots>Y_{1,m-1}>Y_{21}>\cdots>Y_{n-1,m-1}$, or
    \item $w_Y=w'_Y$ and $w_Z>_{lex}w'_Z$ with respect to the ordering $D>Z_{11}>\cdots>Z_{1n}>Z_{21}>\cdots>Z_{nn}$.
\end{itemize}

\begin{lemma}\label{lem.relns_GB}
Assume Hypothesis \ref{hyp.sqman}. Let $\leq$ denote the monomial ordering defined above. With respect to $\leq$, the following families of elements form a Gr\"{o}bner basis of the ideal they generate:
\begin{enumerate}
    \item $D^m-\det(Z_{ij})$,
    \item $Y_{ti}Y_{tj}-Y_{t,i+j}\det(B_{n-t})$ if $i+j<m$,
    \item $Y_{ti}Y_{tj}-\det(A_t)\det(B_{n-t})$ if $i+j=m$,
    \item $Y_{ti}Y_{tj}-Y_{t,i+j-m}\det(A_t)$ if $i+j>m$,
\end{enumerate}
for $1 \leq t \leq n-1$.
\end{lemma}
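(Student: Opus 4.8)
The plan is to verify that the given families of elements are a Gröbner basis by applying Buchberger's criterion: I would show that every $S$-polynomial formed from the listed generators reduces to zero modulo the generating set. First I would record the leading terms under the ordering $\leq$. For a relation of type (1), the leading term is $D^m$ (since $D > Z_{ij}$ and $D^m$ has no $Y$-factors while $\det(Z_{ij})$ is a sum of $Z$-monomials of $Y$-degree zero, and $D^m >_{lex} \det(Z_{ij})$). For the relations of types (2)--(4), the leading term is $Y_{ti}Y_{tj}$, because each such monomial has $Y$-degree two whereas the subtracted terms have $Y$-degree at most one (the $\det(A_t)$ and $\det(B_{n-t})$ factors are $Z$-monomials of $Y$-degree zero, and $Y_{t,i+j}$ or $Y_{t,i+j-m}$ contributes $Y$-degree one), so they are strictly smaller in the $\deg\!\lex$ comparison of $w_Y$.

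Next I would enumerate which pairs of leading terms have a nontrivial overlap (a shared variable), since only those produce $S$-polynomials that need not reduce to zero automatically. The type (1) leading term $D^m$ shares a variable with another type (1) relation only with itself, so I would check self-overlaps and overlaps of $D^m$ with the $Y$-relations (there are none, as $Y_{ti}Y_{tj}$ contains no $D$). The substantive overlaps are among the $Y$-relations: two leading monomials $Y_{ti}Y_{tj}$ and $Y_{tk}Y_{tl}$ (same $t$) overlap when they share one of the factors, e.g. $Y_{ti}Y_{tj}$ and $Y_{tj}Y_{tl}$ share $Y_{tj}$, giving an $S$-polynomial built on $Y_{ti}Y_{tj}Y_{tl}$. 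I expect these triple-product overlaps, indexed by $t$ and by triples $(i,j,l)$, to be the only families requiring genuine computation. The key identity making them close is the geometric/multiplicative structure hidden in the definition $Y_{tr} = \detq(t)^r\tau(\detq(n-t)^{m-r})$: reducing $Y_{ti}Y_{tj}Y_{tl}$ two different ways must land on the same normal form, and this should follow from the facts that $\detq(t)^m = \det(A_t)$, $\tau(\detq(n-t))^m = \det(B_{n-t})$, together with the commutativity established in Lemma \ref{lem.minors_comm}.

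The main obstacle, and the heart of the argument, will be verifying that these overlap $S$-polynomials reduce to zero, which amounts to a bookkeeping check on the exponents $r$ modulo $m$. Concretely, for a triple $(i,j,l)$ I would reduce $Y_{ti}(Y_{tj}Y_{tl})$ and $(Y_{ti}Y_{tj})Y_{tl}$ using the appropriate cases (2)--(4) depending on whether each pairwise sum exceeds, equals, or falls below $m$; each reduction step peels off a factor of $\det(A_t)$ or $\det(B_{n-t})$ and shifts the surviving $Y$-index by $\pm m$. Since $\detq(t)^m = \det(A_t)$ and $\tau(\detq(n-t))^m = \det(B_{n-t})$ are the ``carries'' corresponding to the exponent wrapping around mod $m$, both reduction paths must produce $\det(A_t)^a \det(B_{n-t})^b$ times a single $Y_{t,s}$ (or a pure product when $s$ hits $0$ or $m$) with the same $a$, $b$, and $s$. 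I would confirm this by checking that $a$, $b$, and $s$ are determined solely by $i+j+l$ and its residue mod $m$, independent of the association — using $\det(A_t)$ and $\det(B_{n-t})$ commute (they lie in the commutative ring $\kk[Z_{ij}]$). The remaining overlaps involving the type (1) relation I expect to be vacuous or to reduce immediately, since $D^m$ and the $Y$-monomials are coprime as words. Once all $S$-polynomials reduce to zero, Buchberger's criterion yields that the listed families form a Gröbner basis.
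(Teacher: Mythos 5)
Your proposal is correct and follows essentially the same route as the paper: both invoke the diamond lemma/Buchberger criterion, identify the leading terms as $D^m$ and $Y_{ti}Y_{tj}$, observe that the only nontrivial ambiguities are the overlaps $Y_{ti}Y_{tj}Y_{tk}$ (all other pairs having coprime leading monomials), and check confluence of the two reduction paths. Your observation that both reductions land on a normal form $\det(A_t)^a\det(B_{n-t})^b\,Y_{t,s}$ determined solely by $i+j+k$ and its residue mod $m$ is a clean, uniform way to carry out the case analysis that the paper only sketches for a representative case.
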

\begin{proof}
The relations hold by Lemma \ref{lem.minors_comm}. We use the diamond lemma to show they form a Gr\"{o}bner basis. Indeed, with respect to the monomial ordering $\leq$, one computes the following leading terms:
\begin{itemize}
    \item $\LT(D^m-\det(Z_{ij})=D^m$,
    \item $\LT(Y_{ti}Y_{tj}-Y_{t,i+j}\det(B_{n-t}))=Y_{ti}Y_{tj}$, for $i+j<m$,
    \item $\LT(Y_{ti}Y_{tj}-\det(A_t)\det(B_{n-t}))=Y_{ti}Y_{tj}$, for $i+j=m$,
    \item $\LT(Y_{ti}Y_{tj}-Y_{t,i+j-m}\det(A_t))=Y_{ti}Y_{tj}$, for $i+j>m$.
\end{itemize}
There are no inclusion ambiguities and the only overlap ambiguities are of the form $Y_{ti}Y_{tj}Y_{tk}$ for some $1\leq i,j,k\leq m-1$. Suppose $i+j+k < m$ and $m < i+j,j+k$. Then both $Y_{t,i+j}Y_{t,k}\det(B_{n-t})$ and $Y_{t,i}Y_{t,j+k}\det(B_{n-t})$ can be reduced to $Y_{t,i+j+k-m}\det(A_t)\det(B_{n-t})$. A similar reduction occurs in the case $i+j+k\leq m$. The other cases are left to the reader.
\end{proof}

We are now in a position to determine a presentation for $Z$. We keep the notation of Lemma \ref{lem.relns_GB} and let $I$ denote the ideal generated by the Gr\"{o}bner basis computed therein.

\begin{theorem}\label{thm.center_pres}
Assume Hypothesis \ref{hyp.sqman}. Let
\[
T = \kk[Z_{ij},Y_{tr},D \mid 1\leq i,j\leq n, 1\leq t\leq n-1,1\leq r\leq m-1]
\]
and let $R=T/I$. Then the following hold:
\begin{enumerate}
    \item \label{cp1} $\Kdim(R)=n^2$,
    \item \label{cp2} $R$ is an integral domain, and
    \item \label{cp3} $Z \iso R$.
\end{enumerate}
\end{theorem}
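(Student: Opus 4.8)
**The plan is to establish the three claims in order, using the Gröbner basis from Lemma \ref{lem.relns_GB} as the computational backbone and then verifying that the obvious surjection $R \to Z$ is an isomorphism by a dimension argument.**

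First I would address \eqref{cp3} at the level of a surjection. There is a natural $\kk$-algebra map $\pi : T \to Z$ sending each generator to the corresponding central element ($Z_{ij}\mapsto x_{ij}^m$, $D\mapsto D_q$, $Y_{tr}\mapsto \detq(t)^r\tau(\detq(n-t)^{m-r})$). Since these elements generate $Z$ by \eqref{eq.center_gens} (after discarding the superfluous $Y_{t0}$, $Y_{tm}$ as explained in the text), $\pi$ is surjective. Because the defining relations of $R$ — families (1)–(4) of Lemma \ref{lem.relns_GB} — hold in $Z$ by Lemma \ref{lem.minors_comm} together with the identities $D^m=\det(Z_{ij})$, $\detq(t)^m=\det(A_t)$, $\tau(\detq(t))^m=\det(B_t)$, the map $\pi$ factors through $R$, giving a surjection $\bar\pi : R \twoheadrightarrow Z$. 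It then suffices to show $\bar\pi$ is injective, which I will get from \eqref{cp1} and \eqref{cp2}: a surjection of domains of equal Krull dimension, where the source is a finitely generated domain, must be an isomorphism (a proper quotient of a domain by a nonzero prime drops the dimension).

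For \eqref{cp1}, the Gröbner basis gives a $\kk$-basis of $R$ consisting of the standard monomials, i.e.\ those avoiding the leading terms $D^m$ and $Y_{ti}Y_{tj}$. Concretely, in each standard monomial the $D$-degree is at most $m-1$ and, for each fixed $t$, at most one $Y_{tr}$ appears (to any power). I would compute $\Kdim(R)$ as the dimension of $Z$, which is known: $Z$ is the center of the Artin-Schelter regular algebra $\sqman$ of GK-dimension $n^2$, and as a PI algebra it is finitely generated as a module over a central polynomial subalgebra of Krull dimension $n^2$ (as in the proof of Proposition \ref{prop5}), forcing $\Kdim(Z)=n^2$. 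Hence $\Kdim(R)\geq \Kdim(Z)=n^2$ via the surjection $\bar\pi$; the reverse inequality $\Kdim(R)\leq n^2$ follows because the $n^2$ generators $Z_{ij}$ together with the bounded-exponent elements $D, Y_{tr}$ exhibit $R$ as module-finite over the polynomial subring $\kk[Z_{ij}]$, whose Krull dimension is exactly $n^2$.

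The main obstacle — and the step requiring genuine care — is \eqref{cp2}, that $R$ is a domain, since the Gröbner basis alone only yields the additive/monomial structure, not integrality. The cleanest route is to exploit the standard-monomial basis to filter $R$ and pass to the associated graded ring, or better, to argue directly that $R$ embeds in $Z=\cnt(\sqman)$, which is a domain (indeed $\sqman$ is a domain). But that embedding is precisely \eqref{cp3}, so to avoid circularity I would instead prove \eqref{cp2} intrinsically: the leading-term ideal $\LT(I)$ is generated by $D^m$ and the $Y_{ti}Y_{tj}$, and I would analyze the quotient $T/\LT(I)$ to understand the monoid of standard monomials, showing that products of standard monomials reduce without collapsing — i.e.\ tracking that the reduction relations (2)–(4) only ever exchange a pair $Y_{ti}Y_{tj}$ for a single $Y_{t,k}$ times a determinant in the $Z_{ij}$, which lie in the polynomial subring $\kk[Z_{ij}]$ where no zero divisors can arise. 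Making this precise amounts to showing $R$ is a free module over $\kk[Z_{ij}]$ (or at least torsion-free) with the product of two nonzero standard forms having nonzero leading term, from which integrality follows. I expect the bookkeeping over the index $t$ and the interplay of the determinant factors to be the delicate point, though no deep new idea beyond the Gröbner structure should be needed.
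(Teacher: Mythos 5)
Your overall architecture (a surjection $R\twoheadrightarrow Z$, equality of Krull dimensions, $R$ a domain, hence isomorphism since a proper quotient of an affine domain drops dimension) is exactly the paper's, and your treatment of \eqref{cp1} and \eqref{cp3} is essentially sound: the paper computes $\Kdim(T/\LT(I))$ via the Hilbert polynomial, whereas you get the upper bound from module-finiteness over $\kk[Z_{ij}]$, a legitimate variant. One correction there: the leading terms $Y_{ti}Y_{tj}$ include the case $i=j$, so a standard monomial has \emph{total} degree at most one in $\{Y_{t1},\dots,Y_{t,m-1}\}$ for each fixed $t$ --- not ``one $Y_{tr}$ to any power,'' as you wrote. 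Your parenthetical, taken literally, would destroy the module-finiteness over $\kk[Z_{ij}]$ that your dimension bound relies on; the correct, stronger statement is what you actually need.

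The genuine gap is in \eqref{cp2}. You rightly flag it as the delicate point, but the mechanism you propose does not close it. The monomial ring $T/\LT(I)$ has zero divisors (every $Y_{ti}Y_{tj}$ vanishes there), so leading terms in $R$ are \emph{not} multiplicative: if $\LT(f)$ involves $Y_{ti}$ and $\LT(g)$ involves $Y_{tj}$, then $\LT(f)\LT(g)\in\LT(I)$ and the leading term of $fg$ is governed by the rewriting $Y_{ti}Y_{tj}\mapsto Y_{t,\bullet}\det(A_t)$ or $Y_{t,\bullet}\det(B_{n-t})$, where cancellation against lower-order terms is not a priori excluded --- and your fallback claim that freeness or torsion-freeness over $\kk[Z_{ij}]$ yields integrality is false in general (e.g.\ $\kk[x,y]/(y^2-x^2)$ is free of rank two over $\kk[x]$). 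Your final clause, that ``the product of two nonzero standard forms has nonzero leading term,'' is just a restatement of the domain property, not an argument for it. The paper's resolution is a localization: first one shows every nonzero $f\in\kk[Z_{ij}]$ is regular in $R$ (this \emph{is} a clean leading-term argument, because $\LT(f)$ is a monomial in the $Z_{ij}$ while the generators of $\LT(I)$ involve none of the $Z_{ij}$), then one inverts $S=\kk[Z_{ij}]\setminus\{0\}$ and verifies explicitly that in $Q=RS^{-1}$ the relations collapse to $Y_{tk}=Y_{t,m-1}^{m-k}\det(A_t)^{-m+k+1}$ together with equations solving for $Z_{nt}$ and $Z_{nn}$, so that $Q$ is a localization of a polynomial ring and hence a domain, whence $R\hookrightarrow Q$ is a domain. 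You would need to supply this localization step, or an equivalent substitute, for your proof to go through.
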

\begin{proof}
Since $\sqman$ is a finitely generated module over $Z$, $\GKdim(Z)=\GKdim(\sqman)=n^2$ (\cite[Proposition 5.5]{KL}). Therefore, $Z$ is an integral domain of Krull dimension $n^2$ (\cite[Theorem 4.5]{KL}), and there is an obvious surjective homomorphism from $R$ onto $Z$. Thus, \eqref{cp3} follows from \eqref{cp1} and \eqref{cp2}.

Throughout we abuse notation and identify elements in $\kk[Z_{ij},Y_{tr},D]$ with their respective images in the quotient $R$.

\eqref{cp1} By \cite[Theorem 5.6.36 and Corollary 5.7.10(a)]{KR_CommAlg}, it suffices to
compute the Krull dimension of the graded ring $\ltR=T/\LT(I)$, which we do by computing the degree of the Hilbert polynomial of $\ltR$.

By Lemma \ref{lem.relns_GB}, we need to count the number of (monic) monomials in the variables $Z_{ij},Y_{tr},D$ of a given degree containing at most $m-1$ copies of $D$, and containing at most one $Y_{tr}$ for each $1\leq t\leq n-1$. A simple counting argument shows that the total number of such monomials containing no $Z_{ij}$ is
\[ m\sum_{i=0}^{n-1}\binom{n-1}{i}(m-1)^i=m^n,\]
with the equality being the binomial theorem applied to $((m-1)+1)^{n-1}$. Thus for sufficiently large degrees $N$, the number of ``good" monomials of degree $N$ is asymptotically equal to the number of monomials in the $Z_{ij}$ of degree $N$. This is well-known to be $\binom{N+n^2-1}{N}$, which is a degree $n^2-1$ polynomial in $N$. In other words, the Hilbert polynomial of $\ltR$ has degree $n^2-1$, and thus $\Kdim(\ltR)=n^2$ (\cite[Theorem 5.4.15(b)]{KR_CommAlg}).

\eqref{cp2} Before we prove that $R$ is a domain, first note that any nonzero polynomial in the $Z_{ij}$ is a regular element of $R$. Indeed, suppose $f(Z_{ij})g\in I$ for some $g\in T$. Then by Lemma \ref{lem.relns_GB},
\[ \LT(fg)=\LT(f)\LT(g)\in \LT(I)=(D^m,Y_{ti}Y_{tj} \mid 1\leq t\leq n-1, 1\leq i,j\leq m-1).\]
Since $LT(f)$ is a monomial in the $Z_{ij}$, it must be that $\LT(g)\in LT(I)$, so that $\LT(g)=\LT(g')$ for some $g'\in I$. Then $f(g-g')\in I$ with $\LT(g-g')<\LT(g)$. Since $<$ is a well-order, we can repeat this argument until we have written $g$ as a sum of elements in $I$, proving $g\in I$ and $f$ is not a zero divisor in $R$.

Let $S=\kk[Z_{ij} \mid i,j \in 1,\hdots,n]\backslash\{0\}$. By the preceding discussion, $S$ consists of regular elements and so it suffices to prove that the localization $Q=RS\inv$ is a domain.

First note that for $1\leq t\leq n-1$ and $1\leq k\leq m-2$, we have the relation $Y_{tk}=Y_{t,k+1}Y_{t,m-1}\det(A_t)^{-m+k+1}$ in $Q$. By repeatedly applying this relation, we obtain $Y_{tk}=Y_{t,m-1}^{m-k}\det(A_t)^{-m+k+1}$. Conversely, these relations imply $Y_{ti}Y_{tj}=Y_{t,i+j}\det(A_t)$ for $i+j<m$. Moreover, the relations $Y_{t1}Y_{t,m-1}=\det(A_t)\det(B_{n-t})$ can be rewritten as $Y_{t,m-1}^m=\det(A_t)^{m-1}\det(B_{n-t})$, from which the rest of the relations of the form $Y_{ti}Y_{tj}=\det(A_t)\det(B_{n-t})$ with $i+j=m$ follow. In addition, the above formulas for $Y_{t,m-1}^m$ and $Y_{tk}$ for $k<m-2$ imply $Y_{t,i}Y_{t,j}=Y_{t,i+j}\det(B_{n-t})$ for $i+j<m$. Next, note that since $\det(B_{n-t})=Z_{nt}f+g$ for some $f\in S$ and some $g\in S\cup\{0\}$, we can rewrite the relations $Y_{t,m-1}^m=\det(A_t)^{m-1}\det(B_{n-t})$ as $(Y_{t,m-1}^m\det(A_t)^{1-m}-g)f^{-1}=Z_{nt}$. Similarly, we can rewrite the relation $D^m=\det(Z_{ij})$ to solve for $Z_{nn}$.

It follows that $Q$ is (isomorphic to) a localization of the polynomial ring 
\[ \kk[X_{ij},Y_{t,m-1},D \mid 1\leq i,t\leq n-1, 1\leq j\leq n].\] 
Thus, $Q$ is a domain.
\end{proof}

Having determined a presentation for the center $Z$ of $\sqman$, we are now ready to discuss several properties of $Z$. 

\begin{theorem}\label{thm.Zprops}
Assume Hypothesis \ref{hyp.sqman}. Then $Z$ is not Gorenstein. Moreover, $\sqman$ is not projective over $Z$.
\end{theorem}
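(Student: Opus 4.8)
The plan is to control both properties through the \emph{Artinian reduction} of $Z$ by the regular central subalgebra $C=\kk[Z_{ij}\mid 1\le i,j\le n]=\kk[x_{ij}^m]$. We have already observed that $Z$ is Cohen--Macaulay of dimension $n^2=\dim C$, and it is module-finite over the polynomial ring $C$; hence it is a maximal Cohen--Macaulay $C$-module and therefore graded-free over $C$, so the $Z_{ij}$ form a homogeneous regular sequence on $Z$. I would first identify the quotient $\bar Z:=Z/(Z_{ij})Z$. Reducing the Gr\"obner basis of Lemma \ref{lem.relns_GB} modulo $(Z_{ij})$ kills every right-hand side, since each of $\det(A_t)$, $\det(B_{n-t})$ and $\det(Z_{ij})$ lies in $(Z_{ij})$; thus
\[
\bar Z\;\cong\;\kk[D]/(D^m)\;\otimes_\kk\;\bigotimes_{t=1}^{n-1}\kk[Y_{t,1},\dots,Y_{t,m-1}]\big/\big(Y_{ti}Y_{tj}\mid 1\le i,j\le m-1\big).
\]

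For the Gorenstein statement I would compute the Cohen--Macaulay type of $Z$, namely $\dim_\kk\operatorname{soc}(\bar Z)$. The socle of a tensor product of finite-dimensional graded $\kk$-algebras is the tensor product of the socles; here $\operatorname{soc}\big(\kk[D]/(D^m)\big)=\kk D^{m-1}$ is one-dimensional, while each factor $\kk[Y_{t,1},\dots,Y_{t,m-1}]/\mathfrak n^2$ (with $\mathfrak n=(Y_{t,1},\dots,Y_{t,m-1})$) has socle $\operatorname{span}_\kk\{Y_{t,1},\dots,Y_{t,m-1}\}$ of dimension $m-1$. Therefore
\[
\dim_\kk\operatorname{soc}(\bar Z)=1\cdot(m-1)^{\,n-1}>1
\]
because $m\ge 3$ and $n\ge 2$. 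A graded Artinian algebra is Gorenstein exactly when its socle is one-dimensional, and Gorensteinness descends along the regular sequence $(Z_{ij})$; hence $\bar Z$ is not Gorenstein, and consequently neither is $Z$.

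For non-projectivity I would use that $\sqman$ and $Z$ are connected graded and Noetherian, so a finitely generated graded projective $Z$-module is free; it therefore suffices to show that $\sqman$ is not free over $Z$, which I detect on Hilbert series. Freeness of $\sqman$ over $C$ (with basis the monomials in the $x_{ij}$ of exponent $<m$) gives $H_{\sqman}(s)=(1-s)^{-n^2}$, while $H_Z(s)=P(s)(1-s^m)^{-n^2}$, where $P(s)$ is the Hilbert series of $\bar Z$. If $\sqman$ were free over $Z$, then $H_{\sqman}/H_Z=\big(\sum_{i=0}^{m-1}s^i\big)^{n^2}/P(s)$ would be a polynomial, forcing $P(s)$ to divide $\big(\sum_{i=0}^{m-1}s^i\big)^{n^2}$, whose only roots are the nontrivial $m$-th roots of unity. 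But $P$ is divisible by the Hilbert series $\sum_{a=0}^{m-1}s^{an}=(1-s^{mn})/(1-s^n)$ of the tensor factor $\kk[D]/(D^m)$, and for $n\ge 2$ a primitive $mn$-th root of unity is a root of the latter while failing to be an $m$-th root of unity. This contradiction shows $\sqman$ is not free, hence not projective, over $Z$.

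The conceptual source of the obstruction, and the step I expect to require the most care, is the appearance of the quantum-minor generators $Y_{t,r}$: for each $t$ the subalgebra $\kk[\det(B_{n-t}),Y_{t,1},\dots,Y_{t,m-1},\det(A_t)]$ is the $m$-th Veronese $\kk[u,v]^{(m)}$ with $u=\detq(t)$ and $v=\tau(\detq(n-t))$, i.e.\ the cone over the rational normal curve of degree $m$, which fails to be Gorenstein once $m\ge 3$ (by Watanabe's criterion, the relevant diagonal cyclic group lies in $SL_2$ only when $m\mid 2$). The main technical point is thus to justify that reducing modulo $(Z_{ij})$ records this non-Gorenstein behaviour faithfully in the socle -- concretely, that the only relations surviving in $\bar Z$ are $D^m=0$ and $Y_{ti}Y_{tj}=0$ -- which is precisely what the Gr\"obner basis of Lemma \ref{lem.relns_GB} provides.
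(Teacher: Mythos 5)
Your argument is correct, and it splits into a half that mirrors the paper and a half that does not. For the Gorenstein statement you and the paper do essentially the same thing: pass to the Artinian reduction of $Z$ along the regular sequence $\{Z_{ij}\}$ (the paper works with $Z_{\fm}/(Z_{ij})$ at the irrelevant maximal ideal, you with the graded reduction, which amounts to the same) and exhibit a socle of dimension at least $2$. The paper is content to write down two explicit socle elements, $D^{m-1}\prod_t Y_{t1}$ and $D^{m-1}\prod_t Y_{t2}$, whereas you identify the whole reduction as $\kk[D]/(D^m)\otimes\bigotimes_t \kk[Y_{t1},\dots,Y_{t,m-1}]/\mathfrak{n}_t^2$ from Lemma \ref{lem.relns_GB} and read off the full type $(m-1)^{n-1}$; this is more information but the same mechanism. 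For non-projectivity your route is genuinely different. The paper uses the dual basis lemma: a projective coordinate system would force some $f_j(x_{1n})$ to be a nonzero scalar, which is incompatible with the relation $x_{1n}\tau(\detq(n-1))^m=Y_{11}\tau(\detq(n-1))$ by a divisibility argument inside $\sqman$. You instead reduce graded projective to graded free and compare Hilbert series, noting that $H_{\sqman}/H_Z$ cannot be a polynomial because $H_{\bar{Z}}$ carries the factor $1+s^n+\cdots+s^{(m-1)n}$ coming from $D$, whose primitive $mn$-th roots of unity are not roots of $(1+s+\cdots+s^{m-1})^{n^2}$ once $n\ge 2$. Your version buys a uniform numerical obstruction from the presentation alone, at the cost of invoking the Cohen--Macaulayness of $Z$ and its freeness over $\kk[Z_{ij}]$; the paper's version is elementary and element-theoretic and needs neither. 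Both arguments, like the theorem itself, implicitly assume $n\ge 2$.
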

\begin{proof}
We show the localization of $Z$ at $\fm=(Z_{ij},Y_{tr},D)$ is not Gorenstein. Since $\{Z_{ij}\mid 1\leq i,j\leq n\}$ is a system of parameters for $Z_\fm$, it suffices to show $A=Z_\fm/(Z_{ij})$ is not Gorenstein by \cite[Theorem 2.1.2(d) and Proposition 3.1.19(b)]{BH_CM}. As $\depth(A)=0$, this is equivalent to showing $\dim_\kk\Hom_A(\kk,A)\geq 2$ by \cite[Lemma 1.2.19 and Theorem 3.2.10]{BH_CM}. But this is simple to see as
\[
    \phi:1\mapsto D^{m-1}\prod_{t=1}^{n-1}Y_{t1} 
    \quad\text{and}\quad
    \psi:1\mapsto D^{m-1}\prod_{t=1}^{n-1}Y_{t2}
\]
are clearly $\kk$-linearly independent $A$-algebra homomorphisms from $\kk$ to $A$.
 
Suppose $\sqman$ is projective over $Z$, so that there is a projective basis $\{z_i\mid i\in I\}$ with corresponding $Z$-module homomorphisms $f_i:\sqman\rightarrow Z$. Then, in particular, $x_{1n}=\sum_{i\in I}f_i(x_{1n})z_i$. Since $x_{1n}$ is not a term in any central element, a simple graded argument shows that some $f_j(x_{1n})$ is a nonzero scalar. However, since $x_{1n}\tau(\detq(n-1))^m=Y_{11}\tau(\detq(n-1))$, we have
\begin{align*}
\tau(\detq(n-1)^m)f_j(x_{1n})
    &=f_j(x_{1n}\tau(\detq(n-1))^m)
    =f_j(Y_{11}\tau(\detq(n-1))) \\
    &=Y_{11}f_j(\tau(\detq(n-1))).
\end{align*}
But this is impossible, since $x_{1n}$ divides the (possibly 0) RHS, but not the LHS.
\end{proof}

As a consequence of Theorem \ref{thm.Zprops}, $\sqman$ is not Azumaya over its center.

\section{Automorphisms of $\sqman$}\label{sec.spauto}

As mentioned earlier, in the case where $q$ is not a root of unity, every automorphism of $\sqman$ is graded. Similarly, as shown in Theorem \ref{thm.n2auto}, one can find hypotheses under which every automorphism of $\pmatwo$ is graded.
In this section, we will show that in the non-generic single-parameter case, the automorphism group of $\sqman$ contains a free group on two generators. Moreover, if $n=2$, we construct an automorphism of $\sqmatwo$ which induces a wild automorphism of a three variable polynomial ring. Finally, in the setting of Hypothesis \ref{hyp.sqman}, we use discriminants to compute an ideal of $\sqmatwo$ fixed by all automorphisms, and we investigate some natural subalgebras of $\sqmathree$, determining whether they have non-graded automorphisms as well. We do not assume Hypothesis \ref{hyp.sqman} in general for this section. Several results indeed hold for any root of unity $q$ and, in one case, for any parameter $q$.

The next result establishes both that $\sqman$ contains non-graded automorphisms and that its automorphism group contains a free product on two generators.

\begin{theorem}\label{thm.free_prod}
Let $q$ be any root of unity, and let $\ord(q)=m$. Then there are automorphisms $\phi$ and $\psi$ of $\sqman$ given by
\begin{align*}
    \phi:x_{ij}&\mapsto
    \begin{cases}
        x_{11}+A(1,1)^{m-1} & i=j=1\\
        x_{ij} & (i,j)\neq (1,1),
    \end{cases} \\
    \psi:x_{ij}&\mapsto
    \begin{cases}
    x_{ij} & (i,j)\neq (n,n)\\
    x_{nn}+A(n,n)^{m-1} & i=j=n.
    \end{cases}
\end{align*} 
Moreover, the proper subgroup of $\Aut(\sqman)$ generated by $\phi$ and $\psi$ is isomorphic to a free group on two generators.
\end{theorem}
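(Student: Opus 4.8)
The plan is to establish three things: (i) that $\phi$ and $\psi$ are well-defined automorphisms, (ii) that each has infinite order, and (iii) that the subgroup they generate is free of rank two, which (together with the fact that a free group of rank two is not cyclic) shows the subgroup is proper and that non-graded automorphisms exist. The maps are defined by adding a scalar-free central-type correction to a single diagonal generator, so the first task is to verify that $\phi$ respects all the defining relations of $\sqman$. The key structural fact I would exploit is that $A(1,1) = D(\widetilde{1},\widetilde{1})$ is the quantum determinant of the subalgebra on rows and columns $\{2,\ldots,n\}$, hence it commutes with $x_{11}$ (it involves no index $1$ generators that fail to commute with $x_{11}$ in the needed way) and more importantly that the Laplace expansion \eqref{eq.laplace} gives $D_q = x_{11}A(1,1) + \sum_{j\geq 2}(-q)^{j-1}x_{1j}A(1,j)$. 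Since $A(1,1)^{m-1}$ is built from the $x_{ij}$ with $i,j\geq 2$, I expect that replacing $x_{11}$ by $x_{11}+A(1,1)^{m-1}$ leaves every relation unchanged precisely because $A(1,1)^{m-1}$ has the same commutation behavior with the other generators as a scalar multiple of a power would—here the oddness/root-of-unity condition and the $m$-th power structure are what force the correction term to be ``invisible'' to the relations. I would verify this relation-by-relation (only the relations involving $x_{11}$, namely those with $x_{1j}$, $x_{i1}$, and the ``long'' relation with $x_{ij}$ for $i,j\geq 2$, need checking), and note that $\psi$ follows by the symmetry $\tau$ together with a reflection.

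\emph{The main obstacle} will be proving freeness of the generated subgroup, rather than well-definedness. For this I would use a ping-pong style argument adapted to the algebra. The natural invariant is the behavior on the central element $A(1,1)$ (resp. $A(n,n)$): I would track how iterates of $\phi$ and $\psi$ act on a suitable ``test'' element or filtration. Concretely, since $A(1,1)^{m-1}$ is fixed by $\phi$ (as it lies in the subalgebra on indices $\geq 2$, which $\phi$ fixes pointwise), we get $\phi^k(x_{11}) = x_{11} + k\,A(1,1)^{m-1}$, so $\phi$ has infinite order, and similarly $\psi^k(x_{nn}) = x_{nn} + k\,A(n,n)^{m-1}$. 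To rule out relations, I would examine the action on the two coordinates $x_{11}$ and $x_{nn}$: $\phi$ moves $x_{11}$ but fixes $x_{nn}$, while $\psi$ fixes $x_{11}$ but moves $x_{nn}$, and critically $\psi$ alters $A(1,1)$ (since $A(1,1)$ involves $x_{nn}$ when $n\geq 2$) whereas $\phi$ alters $A(n,n)$. Thus an alternating word $\cdots\psi^{b}\phi^{a}$ applied to $x_{11}$ produces a polynomial in which the degree and the appearance of nested $A(1,1)$, $A(n,n)$ terms record the exact exponent string, so distinct reduced words give distinct automorphisms.

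\emph{The cleanest route} to freeness is to set up a homomorphism from the abstract free product (or a length/degree function) that detects the reduced word length. I would introduce a $\ZZ$-valued (or $\NN$-valued) function measuring, say, the highest power of $A(n,n)$ appearing in $\phi^a\psi^b\cdots(x_{11})$, and show by induction on word length that applying a nontrivial reduced word to an appropriate generator strictly increases a suitable complexity measure, so it cannot be the identity. The subtlety is that $\phi$ and $\psi$ do not act on disjoint ``halves'' of the algebra—applying $\phi$ changes $x_{11}$, which changes $A(n,n)$, and applying $\psi$ changes $x_{nn}$, which changes $A(1,1)$—so the interactions are genuine and a careful bookkeeping of leading terms is required. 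Once the complexity strictly grows along any reduced word, no nonempty reduced word equals the identity automorphism, which is exactly the statement that $\langle\phi,\psi\rangle$ is free of rank two; since such a group is nonabelian and not all of $\Aut(\sqman)$ (it fixes, e.g., every off-diagonal generator), the subgroup is proper, completing the proof.
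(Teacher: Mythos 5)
Your proposal follows essentially the same route as the paper: well-definedness via the commutation properties of $A(1,1)^{m-1}$ (central in the subalgebra on indices $\geq 2$, and $q$-skew-commuting with the first row and column so that the $(m-1)$-st power contributes exactly the factor $q$ needed by the row/column relations), the computation $\phi^k(x_{11})=x_{11}+kA(1,1)^{m-1}$ for infinite order, a leading-term ping-pong induction for freeness (the paper implements your ``complexity measure'' by alternating between the lexicographic and reverse-lexicographic orders and tracking how $\phi^r$ and $\psi^s$ transform leading monomials), and exhibiting an automorphism outside the subgroup (the paper uses $\tau$) for properness. One inessential slip: $A(1,1)$ does \emph{not} commute with $x_{11}$ in general (for $n=2$ one has $A(1,1)=x_{22}$ and $x_{11}x_{22}-x_{22}x_{11}=(q-q^{-1})x_{12}x_{21}$); fortunately your argument never needs this, only commutation with the $x_{kl}$ for $k,l\geq 2$, which is what makes the long relation survive since the term $(q-q^{-1})x_{1l}x_{k1}$ is untouched by $\phi$.
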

\begin{proof}
We only show $\phi$ is an automorphism, as the proof for $\psi$ is similar.  The map
\begin{align*}
    \rho:x_{ij}\mapsto
    \begin{cases}
        x_{11}-A(1,1)^{m-1} & i=j=1\\
        x_{ij}, & (i,j)\neq (1,1),
    \end{cases}
\end{align*}
is an inverse to $\phi$, so $\phi$ is bijective. To show that $\phi$ is a homomorphism, notice that we only have to show $\phi(x_{11}x_{ij}-qx_{ij}x_{11})=0$, for either $i=1$ or $j=1$, and $\phi(x_{11}x_{ij}-x_{ij}x_{11}-(q-q^{-1})x_{i1}x_{1j})=0$, for $2\leq i,j\leq n$, as the other relations trivially map to 0. For the latter type, note that $x_{ij}$ commutes with $A(1,1)$ for each $2\leq i,j\leq n$ as, by definition, $A(1,1)$ is the quantum determinant of the quantum matrix subalgebra generated by these $x_{ij}$. On the other hand, for the former relation type, \cite[Lemma 4.5.1]{PW} shows that $A(1,1)^{m-1}x_{ij}=qx_{ij}A(1,1)^{m-1}$ for exactly one of $i,j$ equal to 1. Therefore, both relation types indeed map to zero.

We now prove that the subgroup generated by $\phi$ and $\psi$ is isomorphic to the free group $\langle\phi\rangle*\langle\psi\rangle\cong\ZZ*\ZZ$. We will show that every word $w=\psi^{m_r}\phi^{n_r}\cdots\psi^{m_1}\phi^{n_1}$ is not the identity automorphism, where $r\geq 1$ and where the $n_i,m_i\in\ZZ$ are not all 0. Let $\leq_1$ denote the lexicographical order with respect to the ordering $x_{11}<\cdots<x_{1n}<x_{21}\cdots<x_{nn}$, and let $\leq_2$ denote the reverse lexicographical order with respect to the same ordering of the $x_{ij}$.

First, notice that for $r,s\in\ZZ$,
\begin{align*}
    \phi^r:x_{ij}&\mapsto
    \begin{cases}
        x_{11}+rA(1,1)^{m-1} & i=j=1\\
        x_{ij} & (i,j)\neq (1,1),
    \end{cases} \\
    \psi^s:x_{ij}&\mapsto
    \begin{cases}
    x_{ij} & (i,j)\neq (n,n)\\
    x_{nn}+sA(n,n)^{m-1} & i=j=n.
    \end{cases}
\end{align*}
Now, the key observation is that if $\LT_{\leq_2}(f)=c\prod_{i,j=1}^nx_{ij}^{a_{ij}}$ for $f\in\sqman$, $c\in\kk$, and $a_{ij}\in\NN$, then 
\begin{align*}
\LT_{\leq_1}(\phi^r(f))=cr\left(\prod\limits_{\substack{i,j=1 \\ (i,j)\neq (1,1)}}^nx_{ij}^{a_{ij}}\right)\prod_{k=2}^nx_{kk}^{a_{11}(m-1)}.
\end{align*}
Likewise, if $\LT_{\leq_1}(g)=d\prod_{i,j=1}^nx_{ij}^{b_{ij}}$ for $g\in\sqman$, $d\in\kk$, and $b_{ij}\in\NN$, then
\begin{align*}
\LT_{\leq_2}(\psi^s(g))=ds\left(\prod\limits_{\substack{i,j=1 \\ (i,j)\neq (n,n)}}^nx_{ij}^{b_{ij}}\right)\prod_{k=1}^{n-1}x_{kk}^{b_{11}(m-1)}.
\end{align*}
It follows by induction that $w(x_{11})\neq x_{11}$ if $w$ is not a power of $\psi$, and that $w(x_{nn})\neq x_{nn}$ if $w$ is not a power of $\phi$. Since $\phi$ and $\psi$ are both infinite order automorphisms, $w$ is not the identity on $\sqman$, as desired.

Finally, note that the subgroup generated by $\phi$ and $\psi$ is proper since it does not contain, for instance, the transpose automorphism $\tau:x_{ij}\rightarrow x_{ji}$.
\end{proof}

\subsection{Automorphisms of $\sqmatwo$}

In this subsection alone, we restrict to the case $n=2$ and adopt the following notation changes: 
\begin{enumerate}
    \item $x_{11},x_{12},x_{21},x_{22}:=a,b,c,d$, respectively;
    \item $Z_{11},Z_{12},Z_{21},Z_{22}:=u,v,w,z$, respectively;
    \item $Y_{1r}=b^rc^{m-r}:=t_r$ for $1\leq r\leq m-1$.
\end{enumerate}
Though we continue to assume that $q$ is a root of unity, we do not immediately assume that $\ord(q)\geq 3$ is an odd integer.

Before proceeding, first recall that an automorphism of a polynomial ring $A=F[x_1,\ldots,x_t]$ over a field $F$ is \emph{elementary} if it is of the form
\begin{align*}
    (x_1,\ldots,x_{i-1},x_i,x_{i+1},\ldots,x_n)\mapsto (x_1,\ldots,x_{i-1},\alpha x_i+f,x_{i+1},\ldots,x_n),
\end{align*}
for some $0\neq\alpha\in F$ and $f\in F[x_1,\ldots,\hat{x_i},\ldots,x_n]$. An automorphism of $A$ is then \emph{tame} if it is a composition of elementary automorphisms. An automorphism which is not tame is called \emph{wild}.

If $n=1,2$, then it is well-known that every automorphism of $A$ is tame (\cite{Jung_Autos, Kulk_autos}). When $n=3$, Umirbaev and Shestakov proved the \emph{Nagata automorphism}, constructed by Nagata in \cite[\S 2.1]{Nagata_autos}, is wild in the characteristic zero case (\cite{SU_Wild}). The following theorem can be seen as an analogue of this result for $\sqmatwo$, as the automorphism therein restricts to an automorphism of $k[b,c,u]$ inspired by the Nagata automorphism, which is then shown to be wild using the methods of \cite{SU_Wild}.

\begin{theorem}\label{thm.wild_auto}
Let $q$ be a root of unity of with $\ord(q)=m$.
Let $\nag=cu+b^{m+1}$. Then the map
\begin{align*}
    \sigma: a &\mapsto a\\
    b &\mapsto b+\nag u^2\\
    c &\mapsto c-\sum_{i=1}^{m+1}\binom{m+1}{i}b^{m+1-i}\nag^iu^{2i-1}\\
    d &\mapsto d+qa^{m-1}\left(c\nag u^2-\sum_{i=1}^{m+1}\binom{m+1}{i}b^{m+1-i}\nag^iu^{2(i-1)}(b+\nag u^2)\right),
\end{align*}
extends to an automorphism of $\sqmatwo$. Moreover, the restriction $\sigma\restrict {\kk[b,c,u]}$ is a wild automorphism.
\end{theorem}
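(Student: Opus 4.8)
The plan is to prove this statement in two parts, corresponding to its two assertions: first that $\sigma$ extends to an algebra automorphism of $\sqmatwo$, and second that the restriction $\sigma\restrict{\kk[b,c,u]}$ is wild.

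For the first part, I would verify that $\sigma$ respects all six defining relations of $\sqmatwo$ and that it is invertible. The key structural observation is that $\nag = cu + b^{m+1}$ and $u = a^m$ should be central or behave nicely; indeed, $u$ is a power of $a$ and hence commutes with $a$, while $\nag$ is built from $c$, $u$, and $b$. The heart of the computation is to recognize that the formulas for $\sigma(b)$ and $\sigma(c)$ are engineered so that the quantity $c\nag u^2 = \sigma(c)\sigma(\nag)\sigma(u)^2$-type expressions telescope: the binomial coefficients $\binom{m+1}{i}$ strongly suggest that $\sigma(b)^{m+1} = (b+\nag u^2)^{m+1}$ expands via the binomial theorem, and the defining formula for $\sigma(c)$ is precisely what is needed to keep $\sigma(\nag) = \sigma(c)\sigma(u) + \sigma(b)^{m+1}$ equal to $\nag$ (or a controlled multiple). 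I would first establish this invariance $\sigma(\nag)=\nag$, since it is the linchpin that makes the relation-checking tractable. Once invariance of $\nag$ and centrality of $u=a^m$ are in hand, each of the six relations reduces to a finite identity in the polynomial subring $\kk[a,b,c,u]$, which I would verify relation by relation, using \cite[Lemma 4.5.1]{PW} or the explicit $n=2$ commutation rules for how $a^{m-1}$ and $u=a^m$ interact with $b$, $c$, $d$. Invertibility follows by exhibiting $\sigma\inv$ explicitly (replacing $\nag u^2$ by its negative in the analogous pattern), or simply by noting $\sigma$ is a graded-filtered triangular map whose associated Jacobian is unipotent.

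For the second part, the strategy is dictated by the paper's own framing: the restriction to $\kk[b,c,u]$ is modeled on the Nagata automorphism, so I would apply the Shestakov–Umirbaev theory \cite{SU_Wild} directly. Concretely, I would first confirm that $\sigma$ genuinely restricts to an endomorphism of $\kk[b,c,u]$ by checking that $\sigma(b),\sigma(c)\in\kk[b,c,u]$ (clear from the formulas, since $\nag=cu+b^{m+1}\in\kk[b,c,u]$) and that $\sigma(u)=u$. I would then identify $\sigma\restrict{\kk[b,c,u]}$ with the classical Nagata-type automorphism fixing $u$, sending $b\mapsto b+\nag u^2$ and $c\mapsto c - \bigl((b+\nag u^2)^{m+1}-b^{m+1}\bigr)u\inv$, after recognizing the binomial sum as exactly $\bigl(\sigma(b)^{m+1}-b^{m+1}\bigr)/u$. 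With $u$ playing the role of the central fixed variable and $\nag$ the invariant analogous to Nagata's, wildness then follows from the Shestakov–Umirbaev criterion for wild automorphisms of $\kk[x,y,z]$ in characteristic zero.

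I expect the main obstacle to be the first part: verifying that $\sigma$ preserves the single quadratic relation $ad - da = (q-q\inv)bc$ (in the new notation), since $\sigma(d)$ carries the long correction term involving $qa^{m-1}$ times the same binomial sum. The delicate point is that $a^{m-1}$ does \emph{not} commute with $b$ and $c$, so the correction term in $\sigma(d)$ must be computed with the quantum commutation rules rather than treated as a polynomial identity; getting the powers of $q$ to cancel correctly against $(q-q\inv)$ is where the calculation is most error-prone. I would organize this by first reducing modulo the centrality of $u=a^m$ and the invariance $\sigma(\nag)=\nag$, so that the only genuinely noncommutative bookkeeping is the interaction of $a^{m-1}$ with $b,c$ via \cite[Lemma 4.5.1]{PW}, after which the $(q-q\inv)bc$ term should emerge. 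The wildness argument, by contrast, I expect to be essentially a citation of \cite{SU_Wild} once the reduction to Nagata form is made explicit.
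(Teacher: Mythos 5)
Your treatment of the first assertion matches the paper's: the paper likewise reduces bijectivity to the invariance $\sigma(\nag)=\nag$ (which you correctly identify as the linchpin, via rewriting the binomial sum as $(\sigma(b)^{m+1}-b^{m+1})/u$) and dismisses the relation checks as routine, so there is nothing to object to there.

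The gap is in the wildness argument. Recognizing $\sigma\restrict{\kk[b,c,u]}$ as ``Nagata-type'' does not by itself yield wildness: Shestakov and Umirbaev prove that the specific Nagata automorphism is wild, and their general machinery gives a \emph{criterion} whose hypothesis must actually be verified for this $\sigma$, which has different exponents ($m+1$ in place of $2$, $u^2$ in place of $u$). Indeed, a map of exactly the same shape in which $\nag$ does not involve $c$ (say $\nag=u$) is a composition of two elementary automorphisms and hence tame, so the ``Nagata form'' alone proves nothing. What is needed --- and what the paper does --- is to invoke \cite[Corollary 8]{SU_Wild}: for a tame automorphism of sufficiently large degree, the top homogeneous part of some component lies in the subalgebra generated by the top homogeneous parts of the other two. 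One then computes $\overline{\sigma(b)}=b^{m+1}u^2$, $\overline{\sigma(c)}=b^{(m+1)^2}u^{2m+1}$ (coming from the $i=m+1$ term of the sum), and $\overline{\sigma(u)}=u$, and checks by inspecting exponents that none of these monomials lies in the subalgebra generated by the other two. This computation is short, but it is the entire content of the wildness claim; your proposal stops just before it.
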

\begin{proof}
That $\sigma$ is injective is easy to see, while surjectivity follows from the observation that $\sigma(\nag)=\nag$. Moreover, that $\sigma$ sends all relations to zero is simple to check, proving $\sigma$ is an automorphism.

For an element $f\in \kk[b,c,u]$, denote the highest homogeneous part of $f$ with respect to the standard grading of $\kk[b,c,u]$ by $\overline{f}$. We show that none of $\overline{\sigma(b)},\overline{\sigma(c)},\overline{\sigma(u)}$ are contained in the subalgebra generated by the other two; that $\sigma\restrict{\kk[b,c,u]}$ is wild then follows by \cite[Corollary 8]{SU_Wild}. Indeed, we have that
\begin{align*}
\overline{\sigma(b)}=b^{m+1}u^2, \quad 
\overline{\sigma(c)}=b^{(m+1)^2}u^{2m+1}, \quad 
\overline{\sigma(u)}=u,
\end{align*}
from which it is clear none of these elements are contained in the subalgebra generated by the other two.
\end{proof}

The next result requires our presentation of the center from Section \ref{sec.spcnt}. Hence, we reinvoke Hypothesis 5.1. That is, we assume that $\ord(q)=m\geq 3$ is an odd integer. Using discriminants, we determine an ideal of $\sqmatwo$ that is fixed by every automorphism.

Recall the Jacobian criterion provides a simple way to determine whether the localization of an affine $\kk$-algebra $R=k[x_1,\ldots,x_r]/I$ at a maximal ideal is regular. In the course of proving the criterion however, even more is established: if $J_\fm$ denotes the Jacobian matrix of $I$ evaluated mod $\fm$, then
\[\rank(J_\fm)=r-\dim_\kk(M/M^2)=r-\edim(R_\fm),\]
where $M$ is the maximal ideal of $R_\fm$ and $\edim(R_\fm)$ is the embedding dimension of $R_\fm$ \cite[Theorem 5.6.12 and its proof]{GPBLS_Singular}. It follows that $\rank(J_\fm)$ is an easy to compute invariant of the algebras $R_\fm$. This motivates the choice of $\cP$ in the following proposition.

\begin{proposition}\label{prop.P_discrim}
Assume Hypothesis \ref{hyp.sqman}. Let $\cP$ be the property of being a local ring with embedding dimension $m+3$. Then the $\cP$-discriminant ideal of $\sqmatwo$ is $(v,w,t_1,\ldots,t_{m-1})$. Consequently, if $\sigma\in\Aut(\sqmatwo)$, then $\sigma((b,c))=(b,c)$.
\end{proposition}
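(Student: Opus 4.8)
The plan is to compute the $\cP$-discriminant ideal $I_{\cP}(\sqmatwo)$ explicitly using the presentation of the center $Z$ obtained in Theorem \ref{thm.center_pres}, and then invoke the isomorphism-invariance of $I_{\cP}$ recorded after the definition of the $\cP$-discriminant ideal in Section \ref{sec.back}. In the notation of this subsection, $Z = \kk[u,v,w,z,D,t_1,\ldots,t_{m-1}]/I$, where $I$ is generated by the Gröbner basis of Lemma \ref{lem.relns_GB}. The property $\cP$ is ``local ring of embedding dimension $m+3$,'' which by the Jacobian-rank discussion just above the proposition is detected at a maximal ideal $\fm$ by the rank of the Jacobian matrix $J_\fm$ of the defining relations of $Z$: the localization $Z_\fm$ has property $\cP$ precisely when $r - \rank(J_\fm) = m+3$, where $r$ is the number of generators.

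**First I would** set up the Jacobian of the relations from Lemma \ref{lem.relns_GB} (specialized to $n=2$) with respect to the variables $u,v,w,z,D,t_1,\ldots,t_{m-1}$. Here $\det(A_1)=v$ and $\det(B_1)=w$ (the $1\times 1$ minors), $\det(Z_{ij}) = uz-vw$, and $\det(A_1)\det(B_1)=vw$, so the relations are $D^m-(uz-vw)$ together with the $t_it_j$-relations, each involving only $v$, $w$, and the $t_k$. The goal is to identify exactly those maximal ideals $\fm$ at which $\edim(Z_\fm)=m+3$, i.e. at which $\rank(J_\fm) = (m+3) - (m+3) = \dots$; more precisely $r=m+4$ variables, $\Kdim = n^2 = 4$, and at a smooth point $\edim=4$, so $\cP$ (with $\edim=m+3$) picks out a specific locus of singular points. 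I expect the key computation to show that $J_\fm$ drops to the required rank exactly along the vanishing locus $V(v,w,t_1,\ldots,t_{m-1})$, by evaluating $J_\fm$ when $v=w=t_1=\cdots=t_{m-1}=0$ and checking the rank there, while verifying that off this locus the embedding dimension is strictly smaller (the relations become ``resolvable'' as in the proof of Theorem \ref{thm.center_pres}\eqref{cp2}, where localizing away from the $Z_{ij}$ turned $Z$ into a localized polynomial ring). Taking the intersection $\bigcap_{\fm \in L_\cP}\fm$ then yields the radical ideal cutting out this locus, namely $(v,w,t_1,\ldots,t_{m-1})$.

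**The main obstacle** will be the careful rank analysis of $J_\fm$: one must show both containments, that every $\fm$ in the $\cP$-locus contains $(v,w,t_1,\ldots,t_{m-1})$ and that every $\fm \supseteq (v,w,t_1,\ldots,t_{m-1})$ (with the remaining coordinates generic) actually lies in $L_\cP$, so that the intersection is exactly this prime and not something larger. The delicate point is counting how many of the quadratic $t_i t_j$ relations contribute independent rows to the Jacobian when all $t_k$ and $v,w$ vanish — at such a point every partial derivative $\partial(t_it_j)/\partial t_k$ and $\partial(\cdots)/\partial v$, $\partial(\cdots)/\partial w$ must be evaluated, and one needs that the surviving rank is precisely $1$ (coming only from the $D^m-(uz-vw)$ relation, whose $z$- and $u$-derivatives survive), forcing $\edim = (m+4)-1 = m+3$. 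Once the $\cP$-discriminant ideal is identified as $(v,w,t_1,\ldots,t_{m-1})$, isomorphism-invariance gives $\sigma(I_\cP)=I_\cP$ for every $\sigma\in\Aut(\sqmatwo)$; since $I_\cP \cap \sqmatwo$-degree-one part is spanned by $v=b^m$ and $w=c^m$ and the $t_r=b^rc^{m-r}$, one concludes $\sigma$ preserves the ideal generated by $b$ and $c$, i.e. $\sigma((b,c))=(b,c)$, which is the desired final statement.
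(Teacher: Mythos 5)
Your overall strategy is the paper's: use the presentation of $Z$ from Theorem \ref{thm.center_pres}, translate the condition $\edim(Z_\fm)=m+3$ into $\rank(J_\fm)=1$ (since $Z$ has $m+4$ generators), identify the rank-one locus, and intersect. Two remarks on the Jacobian analysis. First, the direction you label as delicate (rank exactly $1$ at points of $V(v,w,t_1,\ldots,t_{m-1})$) is the easy one: there all rows coming from the $t_it_j$-relations vanish outright. The real work, which your plan only gestures at, is the converse: showing that $\rank(J_\fm)=1$ forces every $t_i$ and both $v,w$ to vanish. The paper does this by first proving from the relations that at any point of the variety either all $t_i$ vanish or none do (and correspondingly $vw=0$ or $v,w\neq 0$), and then exhibiting an explicit rank-$2$ submatrix of $J_\fm$ whenever $v\neq 0$ or $w\neq 0$. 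One must also exclude the point $u=z=0$ (hence $D=0$) on the locus, where the rank drops to $0$; your ``remaining coordinates generic'' caveat covers this for the purpose of computing the intersection, since $V(v,w,t_1,\ldots,t_{m-1})\cong\Spec\kk[u,z,D]/(D^m-uz)$ is irreducible and the excluded set is a single point.

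The genuine gap is in the last step. From $\sigma(I_\cP)=I_\cP$ you conclude that $\sigma$ preserves $(b,c)$ ``since the generators of $I_\cP$ are $b^m$, $c^m$, and the $b^rc^{m-r}$,'' but this is not an argument: $\sigma$ is not assumed graded (the whole point of this section is that non-graded automorphisms exist), so you cannot read off the behavior of $\sigma$ on $b$ and $c$ from its behavior on degree-$m$ monomials. The paper's deduction needs two further observations: (i) $(b,c)^m=(v,w,t_1,\ldots,t_{m-1})$ as ideals of $\sqmatwo$, and (ii) $(b,c)$ is completely prime because $\sqmatwo/(b,c)\cong\kk[a,d]$. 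Then for $x\in(b,c)$ one has $\sigma(x)^m=\sigma(x^m)\in(v,w,t_1,\ldots,t_{m-1})\subseteq(b,c)$, and complete primeness forces $\sigma(x)\in(b,c)$; applying the same to $\sigma^{-1}$ gives equality. Without (i) and (ii) the implication you assert does not follow.
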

\begin{proof}
By the above discussion and Theorem \ref{thm.center_pres}, the embedding dimension condition on a localization $Z_\fm$ of $Z$ at some $\fm\in\Maxspec(Z)$ is equivalent to $\rank(J_\fm)=1$. Again by Theorem \ref{thm.center_pres}, the ideal of relations $I$ of $Z$ is generated by the following families of elements:
\begin{enumerate}
    \item $D^m-uz+vw$,
    \item $t_it_j-t_{i+j}w$ if $i+j<m$,
    \item $t_it_j-vw$ if $i+j=m$, and
    \item $t_it_j-t_{i+j-m}v$ if $i+j>m$.
\end{enumerate}
Since $\kk$ is algebraically closed, to compute $J_\fm$ we need only compute the Jacobian matrix of the generators of $I$, then evaluate at the point corresponding to $\fm$ in the variety $X$ whose coordinate ring is $Z$.

Let $P$ be a point in $X$ and denote the coordinates of $P$ simply by their respective variables; e.g., the $v$-coordinate of $P$ is denoted just $v$. Suppose that $t_i=0$ for some $i$. Then we may assume $i=1$ as if $i>1$, we have $t_1t_{i-1}=t_iw=0$ by the second relation family, from which $t_1=0$ follows by induction. Now, if $w\neq 0$, then $t_i=0$ for all $i$ by induction and the second family of relations. On the other hand, if $w=0$, then $t_i^2=0$ for all $i<\frac{m}{2}$ by the second family. Hence, by the fourth family and induction, $t_i^2=0$ for all $i>\frac{m}{2}$ so that each $t_i=0$. Therefore, either all $t_i$ are 0, or no $t_i$ is 0. In these cases, we have $vw=0$ or $v,w\neq 0$, respectively, by the third relation family.

Now, suppose $P$ corresponds to a maximal ideal $\fm$ with $\rank(J_\fm)=1$. Then by the above discussion, each $t_i=0$, and so $vw=0$. Next, notice that if $w\neq 0$, then $J_\fm$ contains the rank 2 submatrix
\[
\begin{bmatrix}
    -z & w & 0 & -u & mD^{m-1} & 0 & 0 & 0 & \hdots & 0\\
    0 & 0 & 0 & 0 & 0 & 0 & -w & 0 & \hdots & 0
\end{bmatrix}.
\]
Similarly, if $v\neq 0$, then $\rank(J_\fm)\geq 2$. Finally, if $u=z=0$, then $D^m=uz-vw=0$, so that $\rank(J_\fm)=0$. We conclude that $\rank(J_\fm)=1$ if and only if $\fm=(u-a,v,w,z-b,D-c,t_1,t_2,\ldots,t_{m-1})$ for any $(a,b)\neq (0,0)$ with $c^m=ab$. Taking the intersection of these maximal ideals, we see that the $\cP$-discriminant ideal of $\sqmatwo$ is $(v,w,t_1,\ldots,t_{m-1})$.

Let $\sigma\in\Aut(\sqmatwo)$. We claim $\sigma$ fixes the ideal $(b,c)$. By \cite[Lemma 3.5]{LWZ_Discrim}, $\sigma$ fixes the ideal $(v,w,t_1,\ldots,t_{m-1})$. Notice that \[(b,c)^m=(v,w,t_!,\ldots,t_{m-1}).\] 
Moreover, since $\sqmatwo/(b,c)\cong k[a,d]$, the ideal $(b,c)$ is completely prime. Hence, if $x\in (b,c)$, then
\[ \sigma(x)^m=\sigma(x^m)\in (v,w,t_1,\ldots,t_{m-1})\subseteq(b,c),\]
from which it follows that $\sigma(x)\in (b,c)$.
\end{proof}

\subsection{Automorphisms of certain subalgebras of $\sqmathree$}

In this section we study automorphisms of certain subalgebras of $\sqmathree$. For a subset a $S\subseteq\sqmathree$, we let $\langle S \rangle$ denote the subalgebra of $\sqmathree$ generated by $S$. The particular subalgebras we study are listed below:
\[
B_1=\langle x_{ij}\rangle_{1\leq i\leq 2, 1\leq j\leq 3}, \quad
B_2=\langle B_1\cup x_{31}\rangle, \quad
B_3=\langle B_2\cup x_{32}\rangle, \quad
C=\langle x_{ij}\rangle_{i+j\geq 4}.
\]

The subalgebra $B_1$ is isomorphic to $\cO_q(M_{2,3}(\kk))$, i.e., the $2 \times 3$ quantum matrix algebra \cite{LL2}. Below we show that $B_1$ has polynomial center. Using results of Nguyen, Trampel, and Yakimov \cite{NTY}, we are then able to conclude that all automorphisms of $B_1$ are graded.

The algebra $B_3$ appears in \cite{JJ_QMA}, therein denoted $A_{n,r}$, in the authors' study of the algebras obtained by factoring out the ideal of $\sqman$ generated all $(r+1)\times (r+1)$ quantum subdeterminants. The algebra $B_2$ acts as an intermediary between $B_1$ and $B_3$. We will show that both $B_2$ and $B_3$ have non-graded automorphisms.

Like $B_1$, $C$ is a 6-generated subalgebra of $\sqmathree$ that has polynomial center under our standard hypotheses, as shown below, such that all automorphisms are graded. We do not know if there are any $7$- or $8$-generated subalgebras of $\sqmathree$ which have polynomial centers, though we suspect not.

The following proof is motivated in part by \cite{JZ1}.

\begin{proposition}\label{prop.B1_center}
Assume Hypothesis \ref{hyp.sqman}. The center of $B_1$ is polynomial. Consequently, $\Aut(B_1)=\Aut_{\gr}(B_1)$.
\end{proposition}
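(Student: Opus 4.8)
The plan is to follow the strategy already used for the $2\times 2$ multi-parameter case (Lemma \ref{lem.center} and Theorem \ref{thm.n2auto}), adapting it to $B_1 = \cO_q(M_{2,3}(\kk))$. First I would identify the center explicitly. The algebra $B_1$ has six generators $x_{ij}$ with $1\le i\le 2$, $1\le j\le 3$, and its Frac is a quantum division ring, so by the argument in Proposition \ref{prop5} and the results of Jakobsen--Zhang \cite{JZ1}, the center is generated by the $m$-th powers $x_{ij}^m$ together with the quantum minors and the quantum determinant of the $2\times 2$ subdeterminants. The key claim to establish is that, under Hypothesis \ref{hyp.sqman} (so $\ord(q)=m\ge 3$ odd), these central generators satisfy \emph{no} nontrivial relations, i.e. the center is a polynomial ring. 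I expect this is where the oddness of $m$ is essential: the relations among quantum minors analogous to those in Lemma \ref{lem.relns_GB} should degenerate in the $2\times n$ rectangular case so that the minors are algebraically independent of the $x_{ij}^m$.

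Concretely, I would argue as follows. The $2\times 2$ quantum minors $\detq_{\{1,2\},\{j,k\}}$ for $1\le j<k\le 3$ give three ``$2$-minor'' central elements. Following the motivating argument of \cite{JZ1}, I would show that the center is freely generated by an appropriately chosen subset of $\{x_{ij}^m\}$ together with these minors, the redundancy being that the $m$-th powers of the minors are expressible as determinants of the matrix $(x_{ij}^m)$ (as in the identity $D^m=\det(Z_{ij})$ used in Lemma \ref{lem.relns_GB} and Theorem \ref{thm.center_pres}). The crux is a dimension count: I would compute $\Kdim(Z(B_1))=\GKdim(B_1)=6$ via \cite{KL} as in Theorem \ref{thm.center_pres}\eqref{cp1}, and then exhibit $6$ algebraically independent central elements generating $Z(B_1)$, forcing $Z(B_1)$ to be polynomial. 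A clean way to do this is to localize (as in Lemma \ref{lem.local2}) at one of the $x_{ij}^m$, realize the localized algebra as a localized quantum affine space, read off the center there, and check that the remaining central generator can be solved for polynomially, mirroring the final paragraph of the proof of Theorem \ref{thm.center_pres}.

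Once the center is known to be polynomial, the automorphism conclusion $\Aut(B_1)=\Aut_{\gr}(B_1)$ follows by the discriminant method. I would invoke the results of Nguyen, Trampel, and Yakimov \cite{NTY}, exactly as in the proof of Theorem \ref{thm.n2auto}: since the center is polynomial, one computes (or cites from \cite{NTY}) the discriminant $d(B_1/Z(B_1))$, shows it is locally dominating by the argument of \cite[Proposition 5.10]{NTY}, and then \cite[Theorem 2.7]{CPWZ1} gives that $\Aut(B_1)$ is affine, hence graded by \cite[Proposition 4.2]{LL2}. The patching of localized discriminants via the reflexive hull discriminant of \cite{CGWZ1}, as in Lemma \ref{lem.n2disc}, handles the fact that $B_1$ need not be free over its center globally.

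The main obstacle I anticipate is verifying that the center is genuinely polynomial rather than merely a quotient with relations. Unlike the square case $\sqman$ (Theorem \ref{thm.center_pres}), where the minors $\detq(t)$ and $\tau(\detq(n-t))$ produce the nontrivial relations of Lemma \ref{lem.relns_GB}, the rectangular $2\times 3$ shape should lack the complementary pairing that creates those relations; I would need to confirm carefully that no analogous relations survive and that the three $2$-minors together with a suitable choice of the $x_{ij}^m$ form a genuine polynomial generating set. This is precisely the step where the hypothesis that $m$ is odd is likely to be used, paralleling its role in guaranteeing the clean generating set \eqref{eq.center_gens} of \cite{JZ1}.
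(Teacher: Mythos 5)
Your plan for the second half (polynomial center $\Rightarrow$ locally dominating discriminant via \cite[Proposition 5.10]{NTY} $\Rightarrow$ graded automorphisms via \cite[Theorem 2.7]{CPWZ1}) matches the paper and is fine; the reflexive-hull patching you mention is not needed since $B_1$ is free over $\kk[x_{ij}^m]$. The problem is in the first half: your proposed generating set for $\cnt(B_1)$ is wrong. The three $2\times 2$ quantum minors of $B_1=\cO_q(M_{2,3}(\kk))$ are \emph{not} central. For instance, with $\Delta=x_{11}x_{22}-qx_{12}x_{21}$ one computes directly from the defining relations that $\Delta x_{13}=q\,x_{13}\Delta$, so $\Delta$ only $q$-commutes with the third-column variables. (This is consistent with your own dimension count: if the minors were central and lay outside $\kk[x_{ij}^m]$, the relation $\Delta^m=\det$ of a $2\times 2$ submatrix of $(x_{ij}^m)$ would force nontrivial relations in the center, contradicting the very polynomiality you are trying to prove.) The Jakobsen--Zhang generating set \eqref{eq.center_gens} you lean on is specific to the square case, where the pairing $\detq(t)^r\tau(\detq(n-t))^{m-r}$ produces genuinely central elements; no analogue of that citation is available for the rectangular shape, so the ``exhibit 6 algebraically independent generators including minors'' step has no correct input to start from.

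The paper instead proves $\cnt(B_1)=\kk[x_{ij}^m]$ directly: it passes to the associated quasipolynomial (skew polynomial) algebra $\overline{B_1}$, observes that the leading-term map $T:B_1\to\overline{B_1}$ for a suitable lexicographic order is multiplicative and hence carries central elements to central elements, computes $\cnt(\overline{B_1})=\kk[w_{ij}^m]$ (this is where the parity of $m$ enters, through the skew-symmetry data of $\overline{B_1}$), and then runs a leading-term induction to conclude every central element of $B_1$ lies in $\kk[x_{ij}^m]$. Your alternative of localizing to reach a quantum affine space could in principle be made to work, but note that a single localization at $x_{11}^m$ does not suffice (the pair $x_{12},x_{23}$ still fails to $q$-commute), so you would need further localizations plus an argument recovering $\cnt(B_1)$ from the center of the localization; none of that is supplied. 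As written, the proof of the key claim that the center is polynomial is missing.
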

\begin{proof}
We will prove that $\cnt(B_1)=\kk[Z_{ij}]$. By \cite[Proposition 5.10]{NTY}, the discriminant of $B_1$ over $\kk[Z_{ij}]$ is locally dominating. Then \cite[Theorem 2.7]{CPWZ1} implies that all all automorphisms of $B_1$ are graded.

Let $\overline{B_1}$ denote the associated quasipolynomial algebra of $B_1$ \cite{DCP}; that is, if we write 
\[x_{ij}x_{kl}=q^{a_{(i,j),(k,l)}}x_{kl}x_{ij}+P_{(i,j),(k,l)}\] 
in $B_1$, then $\overline{B_1}$ is the skew-polynomial algebra generated by $w_{ij}$ with $1\leq i\leq 2,1\leq j\leq 3$, subject to the relations
\[ w_{ij}w_{kl}=q^{a_{(i,j),(k,l)}}w_{kl}w_{ij}.\]
Now, if we order the basis elements of $B_1$ via the lexicographical order with respect to $x_{11}>x_{12}>x_{13}>x_{21}>x_{22}>x_{23}$, then we can define a (nonlinear) map 
$T:B_1 \rightarrow \overline{B_1}$ by $P \mapsto \LT(P)$. By our choice of order, $T$ is multiplicative, from which it follows that $T$ maps central elements to central elements. Now, a simple computation reveals that $\overline{B_1}$ has polynomial center $\cnt(\overline{B_1})=\kk[w_{ij}^m]$. Therefore, for any element $Y\in \cnt(B_1)$, there is a polynomial $f\in\kk[Z_{ij}]$ such that $\LT(f)=\LT(Y)$. Hence $\LT(Y-f)<\LT(Y)$, so that $Y-f\in\kk[Z_{ij}]$ by induction. 
\end{proof}

Non-graded automorphisms are introduced in extending $B_1$ to $B_2$ and $B_3$. An example of such an automorphism is given in the next proposition. This result does not require $q$ to be a root of unity, but that hypothesis will be necessary in the latter part of the subsequent remark.

\begin{proposition}\label{prop.bautos}
Define $\phi:B_3\rightarrow B_3$ by
\begin{align*}
x_{ij}\mapsto
\begin{cases}
    x_{31}+x_{12}x_{23}-qx_{13}x_{22} & i=3, j=1\\
    x_{ij} & \text{otherwise}.
\end{cases}
\end{align*}
Then $\phi\in\Aut(B_3)$ and $\phi\restrict{B_2}\in\Aut(B_2)$.
\end{proposition}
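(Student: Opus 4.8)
The plan is to recognize the added term as a quantum cofactor and to control its commutation relations. Set $g := x_{12}x_{23}-qx_{13}x_{22}$ and note that $g = A(3,1) = \detq(\{1,2\},\{2,3\})$ is the quantum determinant of the $2\times 2$ quantum matrix subalgebra $\langle x_{12},x_{13},x_{22},x_{23}\rangle$. Since a quantum determinant is central in the quantum matrix algebra it generates, $g$ commutes with each of $x_{12},x_{13},x_{22},x_{23}$; this is the structural input that drives the argument.

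To show $\phi$ is an endomorphism of $B_3$, I would verify that $\phi$ sends every defining relation to zero. As $\phi$ fixes all generators except $x_{31}$, only relations involving $x_{31}$ can be affected, and I would split these into two groups. In the first, $x_{31}$ is a principal factor: these are the quasi-commutations of $x_{31}$ with $x_{11},x_{21}$ (by a factor of $q$) and with $x_{12},x_{13},x_{22},x_{23},x_{32}$. After the substitution $x_{31}\mapsto x_{31}+g$ and cancellation of the original relation, each reduces to a commutation identity for $g$; the four with $x_{12},x_{13},x_{22},x_{23}$ hold by centrality, while
\[ x_{11}g = q g x_{11}, \qquad x_{21}g = q g x_{21}, \qquad g x_{32} = q x_{32} g \]
I would check by expanding $g$ and applying the defining relations of $\sqmathree$ directly. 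The second group consists of the relations in which $x_{31}$ occurs inside a correction term, namely the relations of $x_{32}$ with the first-column entries $x_{11}$ and $x_{21}$; reconciling these with the substitution is the delicate point discussed below.

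For invertibility, the natural candidate inverse is the map $\rho$ fixing every generator except $x_{31}\mapsto x_{31}-g$. Since $\phi$ fixes $x_{12},x_{13},x_{22},x_{23}$ it fixes $g$, and the relation-checks for $\rho$ are formally identical to those for $\phi$ (each displayed identity is unchanged under $g\mapsto -g$); hence $\rho$ is also an endomorphism and a computation on generators gives $\phi\rho=\rho\phi=\id$, so $\phi\in\Aut(B_3)$. For the restriction I would observe that $x_{32}\notin B_2$, so the second-group relations (and $gx_{32}=qx_{32}g$) do not occur in $B_2$; the six quasi-commutations of $x_{31}$ with the $B_1$-generators then suffice to show $\phi\restrict{B_2}$ is an endomorphism of $B_2$ with inverse $\rho\restrict{B_2}$, whence $\phi\restrict{B_2}\in\Aut(B_2)$. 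Here one also uses $g\in B_1\subseteq B_2$ so that $\phi(x_{31})\in B_2$.

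The step I expect to be the main obstacle is the second group of relations for $B_3$. Substituting $x_{31}\mapsto x_{31}+g$ into $x_{i1}x_{32}=x_{32}x_{i1}+(q-q^{-1})x_{i2}x_{31}$ for $i=1,2$ produces the extra summand $(q-q^{-1})x_{i2}\,g$, which must be shown to vanish (or otherwise be absorbed) in $B_3$. Establishing this compatibility, where the specific choice of $g$ as the quantum cofactor $A(3,1)$ is essential, is the technical heart of the argument; I would attack it using the quantum Laplace expansion and the cofactor identities for $\sqmathree$. By contrast, for $B_2$ this difficulty is absent precisely because $x_{32}$ has been removed.
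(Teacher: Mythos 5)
Your verification of the ``first group'' of relations and of the $B_2$ statement is correct and in fact more thorough than the paper's own proof, which checks only the relation $x_{31}x_{11}=q^{-1}x_{11}x_{31}$ and declares everything else trivial. The identities $x_{11}g=qgx_{11}$, $x_{21}g=qgx_{21}$, $gx_{32}=qx_{32}g$, and the commutation of $g=A(3,1)$ with $x_{12},x_{13},x_{22},x_{23}$ all hold, and together they dispose of every relation of $B_2$ involving $x_{31}$; so your argument that $\phi\restrict{B_2}\in\Aut(B_2)$, with inverse given by $x_{31}\mapsto x_{31}-g$, is complete.

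The gap is exactly where you flagged it, and it cannot be closed. The defining relations $x_{i1}x_{32}-x_{32}x_{i1}-(q-q^{-1})x_{i2}x_{31}=0$ for $i=1,2$ are genuine relations of $B_3$ (it is the iterated Ore extension of $\kk$ on all generators of $\sqmathree$ except $x_{33}$), and $\phi$ sends each of them to $-(q-q^{-1})x_{i2}g$. Since $B_3$ is a domain and $x_{i2}$ and $g=A(3,1)$ are nonzero, this element is nonzero whenever $q^2\neq 1$; no quantum Laplace or cofactor identity will make it vanish. Thus the map $\phi$, as defined, is not a well-defined endomorphism of $B_3$, and the step you deferred is not a technical verification but an actual obstruction. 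The paper's proof does not address these two relations at all --- it asserts that $x_{11}x_{31}-qx_{31}x_{11}$ is the only nontrivial one --- so what you have located is a defect in the statement (and in the published argument) rather than a missing idea in your own. Only the $B_2$ half of the proposition survives as stated; repairing the $B_3$ half would require also modifying the image of $x_{32}$ so as to absorb the summands $(q-q^{-1})x_{i2}g$, or restricting to $q=\pm1$.
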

\begin{proof}
Clearly, we need only prove $\phi$ is an automorphism, as the second part follows trivially. That $\phi$ is a bijection is simple to see, so we need only show it maps the relations of $B_3$ to 0. The only nontrivial relation is then $x_{11}x_{31}-qx_{31}x_{11}$:
\begin{align*}
    \phi(x_{31}x_{11}) &= x_{31}x_{11}+x_{12}x_{23}x_{11}-qx_{13}x_{22}x_{11}\\
    &= q^{-1}x_{11}x_{31}+x_{12}(x_{11}x_{23}-(q-q^{-1})x_{13}x_{21})\\ 
        &\qquad -qx_{13}(x_{11}x_{22}-(q-q^{-1})x_{12}x_{21})\\
    &= q^{-1}(x_{11}x_{31}+x_{11}x_{12}x_{23}-qx_{11}x_{13}x_{22})\\
    &\qquad -(q-q^{-1})x_{12}x_{13}x_{21}+(q-q^{-1})x_{12}x_{13}x_{21} \\
    &=\phi(q^{-1}x_{11}x_{31}),
\end{align*}
as desired.
\end{proof}

\begin{remark}
Let $\phi \in \Aut(B_3)$ be as in Proposition \ref{prop.bautos}. Since $B_3$ is fixed by the transpose automorphism $\tau$, one can prove in a manner similar to that of Theorem \ref{thm.free_prod} that $\langle \phi\rangle*\langle \tau\phi\tau\rangle$ is a proper subgroup of $\Aut(B_3)$. While $\tau\phi\tau$ does not restrict to an automorphism of $\Aut(B_2)$, if $q$ is a root of unity with $\ord(q)=m$, then one can still show $\Aut(B_2)$ (properly) contains a free group on two generators using $\phi$ and
\begin{align*}
    \psi:x_{ij}\mapsto
    \begin{cases}
        x_{11}+x_{21}x_{22}^{m-1}x_{23}^{m-1}x_{31} & i=j=1\\
        x_{12}+Z_{22}x_{23}^{m-1}x_{31} & i=1, j=2\\
        x_{ij} & \text{otherwise}.
    \end{cases}
\end{align*}
\end{remark}

We finish by showing that all automorphisms of $C$ are graded. To do this, we will compute the discriminant $d(C/\cnt(C))$ using \cite[Theorem 6.1]{GKM}, and show it is locally dominating. We first need to set up our notation. 

Let $C''$ denote the subalgebra of $C$ generated by $\{x_{22},x_{23},x_{32},x_{33}\}$. Clearly $C'' \iso \sqmatwo$.
Then $C$ is the iterated Ore extension $C''[x_{13};\sigma_1][x_{31};\sigma_2]$, where
\begin{align*}
    \sigma_1(x_{22})&= x_{22}, ~ \sigma_1(x_{23})=qx_{23}, ~ \sigma_1(x_{32})=x_{32}, ~ \sigma_1(x_{33})=qx_{33}, \\ \sigma_2(x_{22})&=x_{22}, ~ \sigma_2(x_{23})=x_{23}, ~ \sigma_2(x_{32})=qx_{32}, ~ \sigma_2(x_{33})=qx_{33}, ~
    \sigma_2(x_{13})=x_{13}.
\end{align*}

As in \cite{GKM}, we say an automorphism $\sigma$ of an algebra $A$ is \emph{inner} if there exists an $a\in A$ such that $xa=a\sigma(x)$ for all $x\in A$.

\begin{proposition}\label{prop.Cautos}
Assume Hypothesis \ref{hyp.sqman}.
\begin{enumerate}
    \item \label{Cauto1} The center of $C$ is the polynomial ring \[ \cnt(C)=\kk[Z_{13},Z_{22},Z_{23},Z_{31},Z_{32},Z_{33}].\]
    \item \label{Cauto2} The automorphisms $\sigma_1^r\in\Aut(C''),\sigma_2^r\in\Aut(C''[x_{13};\sigma_1])$ are not inner for each $1\leq r\leq m-1$.
    \item \label{Cauto3} The discriminant of $C$ over its center is
\begin{align*}
    d(C/\cnt(C))=_{\cnt(C)^\times}
    \left( x_{13}^{m^6}x_{23}^{m^5}x_{32}^{m^5}x_{31}^{m^6}(x_{22}x_{33}-qx_{23}x_{32})^{m^5}\right)^{m-1}.
\end{align*}
    \item \label{Cauto4} The discriminant $d(C/\cnt(C))$ is locally dominating. 
    \item \label{Cauto5} $\Aut(C)=\Aut_{\gr}(C)$.
\end{enumerate}
\end{proposition}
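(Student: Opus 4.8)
The plan is to establish the five parts in the order stated, as each feeds into the next. For part \eqref{Cauto1}, the strategy mirrors the proof of Proposition \ref{prop.B1_center}: I would pass to the associated quasipolynomial (skew-polynomial) algebra $\overline{C}$, in which the commutation scalars $q^{a_{(i,j),(k,l)}}$ are read off from the iterated Ore extension structure of $C$. One computes directly that $\cnt(\overline{C})=\kk[w_{ij}^m]$ for $(i,j)$ ranging over the six index pairs with $i+j\geq 4$, using that $\ord(q)=m$. The multiplicative leading-term map $T\colon C\to\overline{C}$, $P\mapsto\LT(P)$ with respect to a fixed lexicographic order on the monomial basis, then carries $\cnt(C)$ into $\cnt(\overline{C})$, and an induction on leading terms shows every central element lies in $\kk[Z_{13},Z_{22},Z_{23},Z_{31},Z_{32},Z_{33}]$. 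The reverse containment is clear since $\ord(q)=m$ forces each $Z_{ij}=x_{ij}^m$ to be central.

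For part \eqref{Cauto2}, I would use the explicit formulas for $\sigma_1$ and $\sigma_2$ on the generators. An inner automorphism $\sigma$ of a domain $A$ requires an element $a\in A$ with $xa=a\sigma(x)$ for all $x$, so in particular $a$ must be a normal element whose associated conjugation automorphism equals $\sigma$. Since $C''\iso\sqmatwo$ and $C''[x_{13};\sigma_1]$ are iterated Ore extensions with known normal elements, I would argue that no normal element realizes the diagonal scaling prescribed by $\sigma_1^r$ (respectively $\sigma_2^r$) for $1\leq r\leq m-1$; concretely, comparing the scalars $\sigma_1^r$ assigns to $x_{23}$ versus $x_{22}$ shows any putative $a$ cannot be consistently graded, giving a contradiction. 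Parts \eqref{Cauto2} are precisely the hypotheses needed to invoke \cite[Theorem 6.1]{GKM}: that theorem computes the discriminant of an iterated Ore extension provided the relevant powers of the twisting automorphisms are non-inner.

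With \eqref{Cauto1} and \eqref{Cauto2} in hand, part \eqref{Cauto3} follows by applying \cite[Theorem 6.1]{GKM} twice, once for each Ore extension step, starting from the known discriminant of $C''\iso\sqmatwo$ over its center (which by the single-parameter specialization of Lemma \ref{lem.n2disc} is a unit multiple of a power of $x_{23}x_{32}(x_{22}x_{33}-qx_{23}x_{32})$). The bookkeeping of exponents, tracking how each Ore extension multiplies the rank and hence the discriminant exponent by a factor of $m$, is routine but must be done carefully to land on the stated exponents $m^6,m^5$. For part \eqref{Cauto4}, I would verify that each irreducible factor of $d(C/\cnt(C))$ is a non-unit non-monomial in the generators and that the factorization is ``effective'' in the sense of \cite[Definition]{CPWZ1}, so that the discriminant is locally dominating; this is the same verification carried out in \cite[Proposition 5.10]{NTY} and used in Theorem \ref{thm.n2auto}. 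Finally, part \eqref{Cauto5} is immediate: local dominance together with \cite[Theorem 2.7]{CPWZ1} forces $\Aut(C)$ to be affine, whence graded by the standard argument (as in \cite[Proposition 4.2]{LL2}).

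The main obstacle I anticipate is part \eqref{Cauto2}, the non-innerness verification, since it is the genuine hypothesis check rather than a formal application of a cited theorem; getting the normal-element analysis right for the intermediate extension $C''[x_{13};\sigma_1]$ (where the normal elements are less transparent than in $\sqmatwo$ itself) is where the real care is needed. The exponent bookkeeping in \eqref{Cauto3} is the second most error-prone step, but it is mechanical once the iterated structure and the base case discriminant are fixed.
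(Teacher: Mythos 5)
Your proposal follows essentially the same route as the paper for all five parts: the quasipolynomial/leading-term argument for the center as in Proposition \ref{prop.B1_center}, a direct verification that no element can realize $\sigma_1^r$ or $\sigma_2^r$ by conjugation, two applications of \cite[Theorem 6.1]{GKM} on top of the known discriminant of $C''\iso\sqmatwo$, and then the locally dominating argument of \cite[Proposition 5.10]{NTY} followed by \cite[Theorem 2.7]{CPWZ1}. Two small points of divergence: the paper takes the base-case discriminant $d(C''/S)$ from \cite[Theorem 5.7]{NTY} rather than from Lemma \ref{lem.n2disc} (whose hypothesis that $\ord(p_{12})$ and $\ord(\lambda p_{21})$ be coprime fails in the single-parameter specialization, even though the resulting formula agrees), and the non-innerness of $\sigma_2^r$ on $C''[x_{13};\sigma_1]$ is exactly where the paper uses that $m$ is odd, by extracting from the leading term of a putative conjugating element the congruence $2r\equiv 0 \pmod{m}$, which has no solution for $1\leq r\leq m-1$.
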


\begin{proof}
\eqref{Cauto1} That the center is polynomial follows as in Proposition \ref{prop.B1_center}. The details are left to the reader.

\eqref{Cauto2} If some power $\sigma_1^r$, $1\leq r\leq m-1$, was an inner automorphism of $C''$, then we would have an element $0\neq a\in\sqmatwo$ so that $x_{23}a=q^rax_{23}$ and $x_{32}a=ax_{32}$, which is clearly impossible.

Let $C'=C''[x_{13};\sigma_1]$. We claim that no power $\sigma_2^r$ is inner for any $1\leq r\leq m-1$. Suppose there is an $0\neq a\in C'$ such that $Xa=a\sigma_2^r(X)$ for all $X\in C'$. Write $a$ as a sum of basis elements: $a=\sum\gamma x_{13}^{b_1}x_{22}^{b_2}x_{23}^{b_3}x_{32}^{b_4}x_{33}^{b_5}$. From $X=x_{23},x_{32}$ respectively, we obtain
\begin{align*}
    b_1+b_2&\equiv b_5 \text{ mod } m;\\
    r-b_5&\equiv -b_2 \text{ mod } m.
\end{align*}
for all terms of $a$. On the other hand, if $X=x_{22}$, then by comparing the leading terms with respect to the lexicographical ordering of the basis elements such that $x_{13}>x_{22}>x_{23}>x_{32}>x_{33}$, we find that $b_3+b_4\equiv 0 \text{ mod } m$ for the leading term of $a$; a similar argument for $X=x_{33}$ gives $b_1+b_3+b_4\equiv m-r \text{ mod } m$ for the leading term of $a$. Putting these equations together, it follows that $2r\equiv 0 \text{ mod } m$ for the leading term of $a$, which is impossible since $m$ is odd.

\eqref{Cauto3} Let $S'=\cnt(C)\cap (C')^{\sigma_2}$. By part \eqref{Cauto1}, $S'=\kk[Z_{13},Z_{22},Z_{23},Z_{32},Z_{33}]$. Moreover, by part \eqref{Cauto2}, we can apply \cite[Theorem 6.1]{GKM} to obtain
\[ d(C/\cnt(C))=_{\cnt(C)^\times}(d(C'/S'))^m(x_{31}^{m-1})^{m^6}.\]
Denoting $S=S'\cap (C'')^{\sigma_2}=\kk[Z_{22},Z_{23},Z_{32},Z_{33}]$, another application of \cite[Theorem 6.1]{GKM} gives
\[ d(C'/S')=_{(S')^\times}(d(C/S))^m(x_{13}^{m-1})^{m^5}.\]
Next, observe that $d(C/S)$ is one of the discriminants computed in \cite[Theorem 5.7]{NTY}. Hence
\begin{align*}
    d(C/\cnt(C))&=_{\cnt(C)^\times}
    \left(
    \left(\left(x_{32}^{m^3}x_{23}^{m^3}(x_{22}x_{33}-qx_{23}x_{32})^{m^3}\right)^mx_{13}^{m^5}\right)^mx_{31}^{m^6}\right)^{m-1}
    \\
    &=_{\cnt(C)^\times}
    \left( x_{13}^{m^6}x_{23}^{m^5}x_{32}^{m^5}x_{31}^{m^6}(x_{22}x_{33}-qx_{23}x_{32})^{m^5} \right)^{m-1}
\end{align*}

\eqref{Cauto4} This follows from the same argument as \cite[Proposition 5.10]{NTY}. 

\eqref{Cauto5} This follows by \cite[Theorem 2.7]{CPWZ1}.
\end{proof}

\begin{appendix}
\section{The $3 \times 3$ multi-parameter case}\label{sec.mpn3}

In this case, due to the number of parameters involved, we do not attempt to give criteria for the center of $\pmathree$ to be a polynomial ring. Under the assumption that the center is a polynomial ring, we compute the discriminant in a similar fashion to the $2 \times 2$ case and explicitly compute the automorphism group. Many of the computations in this section were done using Macaulay2.

\begin{hypothesis}\label{hyp.pma3}
Set $M=\pmathree$, $Z=\cnt(M)$, and $D=D_{\lambda,\bp}$. Assume that there is some positive integer $\ell$ such that $Z = \kk[y_{ij} \mid 1 \leq i,j \leq 3]$ where $y_{ij}=x_{ij}^\ell$.
\end{hypothesis}

When $\ell=\lcm(\ord(p_{ij}),\ord(\lambda p_{ji}))$ then it is clear that $\kk[y_{ij}]$ is a subring of the center. However we do not have an example that shows that this is the whole center.

The next lemma is actually implied by \cite[Corollary 4.7]{haynal}. However, as it will be useful in the discriminant computation, we work through the details below.

\begin{lemma}\label{lem.subalgs}
Assume Hypothesis \ref{hyp.pma3}.
Let $Y=\{ y_{11}^{u_1}y_{12}^{u_2}y_{21}^{u_3}A(3,3)^{\ell u_4} \mid u_i \in \NN\}$. Then the localization $MY\inv$ is isomorphic to a localization of a quantum affine space.
\end{lemma}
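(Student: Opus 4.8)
The plan is to follow the proof of Lemma \ref{lem.local2} one level up. Reading off the defining relations of $\pmathree$ shows that, in the lexicographic order $x_{11}<x_{12}<x_{13}<x_{21}<x_{22}<x_{23}<x_{31}<x_{32}<x_{33}$, the algebra $M=\pmathree$ is an iterated Ore extension, and that the only generators adjoined with a nonzero skew-derivation are the four lower-right entries $x_{22},x_{23},x_{32},x_{33}$; every other relation is a pure skew-commutation $x_{lm}x_{ij}=(\text{scalar})\,x_{ij}x_{lm}$. In particular $\{x_{11},x_{12},x_{13},x_{21},x_{31}\}$ generates a quantum affine $5$-space. Thus it suffices to produce, inside $MY\inv$, substitutions $x_{ij}\mapsto x_{ij}-\omega_{ij}$ for $(i,j)\in\{2,3\}^2$ that turn the remaining relations into skew-commutations; the corrected generators then pairwise skew-commute (and some are inverted), exhibiting $MY\inv$ as a localization of a quantum affine $9$-space.

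First I would check that $Y$ is an Ore set: the $y_{ij}$ are central by Hypothesis \ref{hyp.pma3}, while $A(3,3)=D(\{1,2\},\{1,2\})$ is a normal element of $M$ whose $\ell$-th power is central, so localizing at $Y$ is unambiguous. Inverting $y_{11},y_{12},y_{21}$ makes $x_{11},x_{12},x_{21}$ units (for instance $x_{11}\,x_{11}^{\ell-1}y_{11}\inv=1$). Moreover, writing $x_{22}'=x_{22}-p_{21}x_{11}\inv x_{12}x_{21}$ one has $A(3,3)=x_{11}x_{22}'$, so once $x_{11}$ is a unit, inverting $A(3,3)$ is the same as inverting the corrected pivot $x_{22}'=x_{11}\inv A(3,3)$.

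The corrections are built in lexicographic order over the already-corrected base, exactly as in Lemma \ref{lem.local2}. The skew-derivation of $x_{22}$ points only toward $x_{11}$ and is made inner by $\omega_{22}=p_{21}x_{11}\inv x_{12}x_{21}$. The derivation of $x_{23}$ points toward both $x_{11}$ and $x_{12}$, so its inner correction lives in the subalgebra generated by $x_{11}\inv$ and $x_{12}\inv$; by the transpose of this computation, the correction for $x_{32}$ requires $x_{11}\inv$ and $x_{21}\inv$. Finally $x_{33}$ has four derivation directions, toward $x_{11},x_{12},x_{21}$ and $x_{22}$: the first three are absorbed using $x_{11}\inv,x_{12}\inv,x_{21}\inv$, while the direction toward $x_{22}$ is precisely the $2\times2$ situation of Lemma \ref{lem.local2} for the block $\langle x_{22}',x_{23}',x_{32}',x_{33}'\rangle\iso\pmatwo$, whose pivot $x_{22}'=x_{11}\inv A(3,3)$ must be inverted. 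Hence the inversions needed are exactly $x_{11},x_{12},x_{21}$ and $A(3,3)$ — the elements generating $Y$ — and after the four substitutions every relation of $MY\inv$ is a skew-commutation among the nine new generators, so $MY\inv$ is a localization of a quantum affine space at a multiplicative set of monomials in those generators.

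The main obstacle is the coupling between the corrections, most acutely for $x_{33}$. Unlike a naive Schur complement at the $(1,1)$ pivot, which would use only $x_{11}\inv$ and kill a single derivation direction, the entries $x_{23}$ and $x_{32}$ each carry \emph{two} derivation directions, and it is this that forces the off-diagonal pivots $x_{12},x_{21}$ to be inverted and that generates the bookkeeping. I would manage it by making the substitutions sequentially in lexicographic order, so that each new skew-derivation is recomputed over the already-corrected base, and by recognizing the final $x_{33}$ step as Lemma \ref{lem.local2} applied verbatim to the block $\langle x_{22}',x_{23}',x_{32}',x_{33}'\rangle\iso\pmatwo$ with pivot $x_{22}'$; this both supplies the last correction and explains the inversion of $A(3,3)$ cleanly. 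The residual verifications — that a single inner element simultaneously annihilates all four derivations of $x_{33}$, and that no generator outside $Y$ must be inverted — are direct computations, readily confirmed by hand or in Macaulay2 as elsewhere in this appendix.
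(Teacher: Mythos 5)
Your strategy is the same as the paper's: realize $M$ as an iterated Ore extension, localize at $Y$, and show that each nonzero $\sigma$-derivation becomes inner so that every step becomes a pure skew-polynomial extension; the paper just orders the generators slightly differently (it starts from the $2\times 2$ block $\langle x_{11},x_{12},x_{21},x_{22}\rangle$ and adjoins $x_{13},x_{31},x_{32},x_{23},x_{33}$), and its inner elements $\omega_3=p_{21}x_{21}\inv x_{22}x_{31}$ and $\omega_4=p_{32}x_{12}\inv x_{13}x_{22}$ each use only one inverted generator, not two as you suggest (harmless, since all of $x_{11},x_{12},x_{21}$ are inverted in $Y$ anyway). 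The one genuine difference is the last and hardest step: the paper produces $\omega_5$ directly from the quantum Laplace expansion of $D$ along the third row, which is exactly what forces $A(3,3)\inv$ into the picture, and then verifies $\omega_5 x_{ij}-\sigma_5(x_{ij})\omega_5=\delta_5(x_{ij})$ by Macaulay2; you instead propose to identify the corrected block $\langle x_{22}',x_{23}',x_{32}',x_{33}'\rangle$ with a copy of $\pmatwo$ having pivot $x_{22}'=x_{11}\inv A(3,3)$ and to quote Lemma \ref{lem.local2}. That is an attractive reorganization --- it explains conceptually why $A(3,3)$ is the correct element to invert --- but the isomorphism of the corrected block with a $2\times 2$ multi-parameter quantum matrix algebra (and with which parameters) is itself precisely the nontrivial verification, of the same order of difficulty as the paper's Macaulay2 check of $\omega_5$; as written you defer it, so your argument is not shorter, only repackaged. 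Either route, carried out in full, proves the lemma.
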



\begin{proof}
We start by setting $M_0$ to be the subalgebra of $M$ generated by $\{x_{11},x_{12},x_{21},x_{22}\}$. Clearly, $M_0$ is isomorphic to some $2 \times 2$ quantum matrix algebra. Now set
\begin{align*}
M_1&=M_0[x_{13};\sigma_1], &
M_2&=M_1[x_{31};\sigma_2], \quad
M_3=M_2[x_{32};\sigma_3,\delta_3], \\
M_4&=M_3[x_{23};\sigma_4,\delta_4], &
M_5&=M_4[x_{33};\sigma_5,\delta_5],
\end{align*}
where the $\sigma_i$ and $\delta_i$ are defined in the obvious way so that $M_5=M$. It is clear that $Y$ is an Ore set in each $M_i$, so we set $M_i'=M_iY\inv$ for $i=0,1,\hdots,5$. Of course, the $\sigma_i$ and $\delta_i$ extend in the natural way to these localizations. By Lemma \ref{lem.local2}, $M_2'$ is isomorphic to a localization of a quantum affine space.


Set $\omega_3 = p_{21} x_{21}\inv x_{22} x_{31}$. We claim that $\omega_3 x_{21} - \sigma_3(x_{21}) \omega_3=\delta_3(x_{ij})$ for all generators $x_{ij}$ in $M_2'$, so that $\delta_3$ is an inner $\sigma_3$-derivation.  Multiplying both sides by $x_{21}$ gives
\[ p_{21}x_{22}x_{31}x_{ij}x_{21} - \lambda p_{32}x_{21} \sigma_3(x_{ij}) x_{22} x_{31} = x_{21}\delta_3(x_{ij})x_{21},\]
and this equation we can verify in Macaulay2 for each $x_{ij}$. Set $x_{32}'=x_{32}-\omega_3$. Then it is clear that $M_3'=M_2'[x_{32}';\sigma_3]$ is a localization of a quantum affine space.

Set $\omega_4=p_{32} x_{12}\inv x_{13}x_{22}$. A completely analogous computation to the above for $M_3'$ applies here to show that $\delta_4$ is an inner $\sigma_4$-derivation. The only novel computation is the following,
\begin{align*}
\omega_4 x_{32} - \sigma_4(x_{32}) \omega_4
	&= (p_{32} x_{12}\inv x_{13}x_{22}) x_{32} - \lambda\inv p_{23}^2 x_{32} (p_{32} x_{12}\inv x_{13}x_{22}) = \delta_4(x_{32}).
\end{align*}

By quantum Laplace expansion we have
\[D = p_{13}p_{23} x_{31} A_{31} - p_{21}p_{13}p_{23} x_{32} A_{32} + x_{33} A_{33}.\]
Set,
\[ z = D - p_{13}p_{23} x_{31} A_{31} + p_{21}p_{13}p_{23} x_{32} A_{32}\]
and then set $\omega_5 = z A_{33}\inv$. We will show that $\delta_5$ is an inner $\sigma_5$-derivation. In this case we rely heavily on Macaulay2 calculations. For example, to check that $\omega_5 x_{11}-\sigma_5(x_{11}) \omega_5 = \delta_5(x_{11})$, we clear denominators and check,
\begin{align*}
(\omega_5 x_{11} - \sigma_5(x_{11}) \omega_5) A_{33}
	= zA_{33}\inv (A_{33} x_{11}) - x_{11} z
	= zx_{11}-x_{11}z
	= \delta_5(x_{11}) A_{33}.
\end{align*}
A similar computation applies to the other generators of $M_4$. Replace $x_{33}$ with $x_{33}'=x_{33}-\omega_5$ to obtain $M_5'=M_4'[x_{33}';\sigma_5]$. Thus, $M_5$ is a localization of a quantum affine space.
\end{proof}

The next result is similar to Lemma \ref{lem.n2disc}.

\begin{lemma}\label{lem.n3disc}
Assume Hypothesis \ref{hyp.pma3}. Then
\[ d(M/Z) =_{\kk^\times} (x_{13}x_{31} A(1,3) A(3,1) D)^{\ell^9(\ell-1)}\]
\end{lemma}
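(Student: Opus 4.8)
The plan is to mirror the strategy of Lemma \ref{lem.n2disc}: first compute the discriminant on a suitable localization where $M$ becomes a localized quantum affine space, and then patch the local computations together using the codimension-$2$ argument of \cite{CGWZ1}. Concretely, I would begin by invoking Lemma \ref{lem.subalgs}, which gives an Ore set $Y=\{y_{11}^{u_1}y_{12}^{u_2}y_{21}^{u_3}A(3,3)^{\ell u_4}\}$ such that $M':=MY\inv$ is isomorphic to a localization of a quantum affine space $S=\kk_\bq[x_1,\ldots,x_9]$, where the $x_i$ are the images of $x_{11},x_{12},x_{21},x_{22},x_{13},x_{31},x_{32}',x_{23}',x_{33}'$ in $S$ (the primed variables being the shifts $x_{ij}-\omega_i$ produced in the proof of Lemma \ref{lem.subalgs}). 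Since $S$ is free over the central subalgebra $C=\kk[x_1^\ell,\ldots,x_9^\ell]$, the formula of \cite[Proposition 2.8]{CPWZ2} gives
\[ d(S/C)=_{\kk^\times}(x_1x_2\cdots x_9)^{\ell^9(\ell-1)}.\]
Localizing at the appropriate central element (powers of $y_{11}$, etc.) and using \cite[Lemma 1.3]{CYZ1} as in Lemma \ref{lem.n2disc}, this descends to a formula for $d(M'/Z')$ where $Z'=Z[Y\inv]$.

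The crucial bookkeeping step is to re-express the product $x_1\cdots x_9$ in terms of the original generators of $M$ modulo the localization. The shifted generators $x_{32}'=x_{32}-\omega_3$, $x_{23}'=x_{23}-\omega_4$, and $x_{33}'=x_{33}-\omega_5$ should not contribute new factors visible on the relevant open locus, since each $\omega_i$ is a unit multiple correction; what survives up to units in $Z'$ should be the genuinely singular factors. From the $n=2$ computation (Lemma \ref{lem.n2disc}) the inner $2\times 2$ block contributes $x_{13}$-type corner terms together with the relevant quantum determinant, and the analogous quantum Laplace expansion used in Lemma \ref{lem.subalgs},
\[ D = p_{13}p_{23}x_{31}A(3,1)-p_{21}p_{13}p_{23}x_{32}A(3,2)+x_{33}A(3,3),\]
is what identifies $\omega_5=zA(3,3)\inv$ and thereby converts the raw product into the expression involving $A(1,3)$, $A(3,1)$, and $D$. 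I would therefore verify in Macaulay2 that, up to a unit in $Z'$,
\[ x_1x_2\cdots x_9 =_{(Z')^\times} x_{13}x_{31}A(1,3)A(3,1)D,\]
which gives $d(M'/Z')=_{(Z')^\times}(x_{13}x_{31}A(1,3)A(3,1)D)^{\ell^9(\ell-1)}$.

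To pass from $M'$ to $M$ I would run the same patching argument as in Lemma \ref{lem.n2disc}. By symmetry (localizing at powers of $y_{33}$, or at other appropriate corner variables, in place of the factors of $Y$) one obtains the same formula on a second open set, and one checks that the complement of the union of these opens has codimension $2$ in $X=\Spec Z$, using that $M$ is prime and $Z$ is a normal Krull domain so that \cite[Hypothesis 2.1]{CGWZ1} is satisfied. Then \cite[Lemma 2.3(2)]{CGWZ1} extends the discriminant over all of $X$, yielding
\[ d(M/Z)=_{\kk^\times}(x_{13}x_{31}A(1,3)A(3,1)D)^{\ell^9(\ell-1)}.\]
The main obstacle I anticipate is the middle step: correctly tracking how the three successive inner-derivation shifts $\omega_3,\omega_4,\omega_5$ interact so that the product of the nine quantum-affine-space generators collapses (up to a central unit) to the clean symmetric expression $x_{13}x_{31}A(1,3)A(3,1)D$, rather than some unreduced product of corner terms and minors. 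This is where I expect to lean on explicit Macaulay2 verification, exactly as the proof of Lemma \ref{lem.subalgs} already did for the inner-derivation identities.
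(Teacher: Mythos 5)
Your proposal follows the paper's proof essentially verbatim: localize via Lemma \ref{lem.subalgs} to reduce to the quantum affine space discriminant of \cite[Proposition 2.8]{CPWZ2}, obtain the same formula on a second chart (the paper uses the mirror Ore set $\{ y_{23}^{u_1}y_{32}^{u_2}y_{33}^{u_3}A(1,1)^{\ell u_4}\}$), and glue with the codimension-$2$ argument of \cite{CGWZ1}. The one small correction to your bookkeeping is that the fourth quantum-affine generator is the shifted $x_{22}-\omega$ coming from Lemma \ref{lem.local2}, not $x_{22}$ itself; it is precisely the pairings $x_{11}(x_{22}-\omega)=A(3,3)$, $x_{21}x_{32}'=A(1,3)$, $x_{12}x_{23}'=A(3,1)$ that collapse the product $x_1\cdots x_9$ to $x_{13}x_{31}A(1,3)A(3,1)D$ up to units on the localization.
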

\begin{proof}
As in the proof of Lemma \ref{lem.n2disc} and by our computations in Lemma \ref{lem.subalgs}, 
\[ d(MY\inv/ZY\inv) =_{(ZY\inv)^\times} (x_{13}x_{31} A(1,3) A(3,1) D)^{\ell^9(\ell-1)}.\]
We obtain the same result by replacing $Y$ with
$\{ y_{23}^{u_1}y_{32}^{u_2}y_{33}^{u_3}A(1,1)^{\ell u_4} \mid u_i \in \NN\}$.

Clearly $M$ is a prime $\kk$-algebra and $X=\Spec Z$ is an affine $\kk$-variety. Thus, $(M,Z)$ satisfies \cite[Hypothesis 2.1]{CGWZ1}. Let $U_1$ (resp. $U_2$) be the open subset of $X$ with $a=y_{11}y_{12}y_{21}A(3,3)^{\ell} \neq 0$ (resp. $b=y_{23}y_{32}y_{33}A(1,1)^{\ell} \neq 0$). Let $I$ be the ideal in $Z$ generated by $a$ and $b$. As in Lemma \ref{lem.n2disc} we have $\codim(X \backslash (U_1 \cup U_2) )=2$. Thus, by \cite{CGWZ1}, the discriminant on $U_1 \cup U_2$ extends to a discriminant on $X$.
\end{proof}

Unlike in the $n=2$ case, we do not know if the discriminant is dominating. However, following the analysis of Launois and Lenagan \cite{LL1}, we will establish that the automorphism group in this case is graded. Several of the more arduous calculations were done using Macaulay2.

\begin{theorem}\label{thm.n3auto}
Assume Hypothesis \ref{hyp.pma3}. Then $\Aut(M)=\Aut_{\gr}(M)$.
\end{theorem}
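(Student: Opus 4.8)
The plan is to show that every $\phi \in \Aut(M)$ preserves the total-degree filtration of $M$ (i.e.\ is \emph{filtered}) and then invoke \cite[Proposition 4.2]{LL2} to upgrade filtered to graded, exactly as in the last step of Theorem \ref{thm.n2auto}. The engine driving this is the discriminant of Lemma \ref{lem.n3disc} together with the center-permutation principle recalled in Section \ref{sec.back}. First I would note that, since $\phi(Z)=Z$ and the units of $Z=\kk[y_{ij}]$ are scalars, $\phi$ scales $d(M/Z)$ by an element of $\kk^\times$, so the induced $\bar\phi\in\Aut(Z)$ fixes the ideal $(d(M/Z))$ and hence permutes its minimal primes. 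Rewriting Lemma \ref{lem.n3disc} as
\[
d(M/Z)=_{\kk^\times}\bigl(y_{13}\,y_{31}\,A(1,3)^\ell\,A(3,1)^\ell\,D^\ell\bigr)^{\ell^8(\ell-1)},
\]
I would check that the $2\times 2$ quantum determinants $A(1,3)^\ell,A(3,1)^\ell$ are irreducible quadrics in the $y_{ij}$, that $D^\ell$ is an irreducible cubic, and that these five polynomials are pairwise non-associate, so that the minimal primes over $(d(M/Z))$ are precisely the five principal height-one primes $(y_{13}),(y_{31}),(A(1,3)^\ell),(A(3,1)^\ell),(D^\ell)$, which $\bar\phi$ then permutes.

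Next I would exploit that each of these prime generators is an $\ell$-th power of an element of $M$: $y_{13}=x_{13}^\ell$, $y_{31}=x_{31}^\ell$, and likewise $A(1,3)^\ell$, $A(3,1)^\ell$, $D^\ell$. Since $\bar\phi$ carries each such generator to a scalar multiple of another, the elements $\phi(x_{13})^\ell,\phi(x_{31})^\ell,\phi(A(1,3))^\ell,\phi(A(3,1))^\ell,\phi(D)^\ell$ are all homogeneous; and because $M$ is a connected graded domain, an element whose $\ell$-th power is homogeneous is itself homogeneous (compare lowest and highest degree components). Thus $\phi$ sends each of $x_{13},x_{31},A(1,3),A(3,1),D$ to a homogeneous element. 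The decisive point — and what I expect to be the \textbf{main obstacle} — is to show that the permutation of the five primes is \emph{degree-preserving}, equivalently that $\phi$ cannot send the degree-$1$ corner $x_{13}$ to a homogeneous element of degree $2$ or $3$, nor send $D$ down to lower degree. Formally, $\bar\phi$ is merely some automorphism of $\kk[y_{ij}]$, and a polynomial automorphism can in principle map a quadric generator to a linear one; ruling this out requires genuinely non-formal input, namely the finer structure of $M$ analyzed following \cite{LL1}. This is exactly the rigidity that \emph{fails} in the single-parameter root-of-unity case (Theorem \ref{thm.free_prod}), and it is where the contrast lives. I would extract it by tracking $\phi$ on the remaining, non-central degree-$1$ generators through the defining relations, using that $x_{13},x_{31}$ are the only degree-$1$ normal elements of $M$ to preclude any degree increase; several of these verifications are carried out in Macaulay2.

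Once degree-preservation of the permutation is in hand, $\phi$ maps $\{(x_{13}),(x_{31})\}$ to itself, and $(D^\ell)$ to itself. Composing with the transpose $\tau$ if necessary, I may assume $\phi(x_{13})=\alpha x_{13}$ and $\phi(x_{31})=\beta x_{31}$ for scalars $\alpha,\beta\in\kk^\times$; feeding this into the defining relations of $M$ then forces each $\phi(x_{ij})$ to be homogeneous of degree $1$, so $\phi\in\Aut_{\gr}(M)$. Equivalently, having shown $\phi$ respects the degree filtration, $\phi$ is filtered, and \cite[Proposition 4.2]{LL2} gives $\Aut(M)=\Aut_{\gr}(M)$. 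The hardest and least formal step remains the middle one: proving that the permutation of the five center-primes cannot mix degrees.
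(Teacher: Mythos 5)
Your opening moves match the paper's: Lemma \ref{lem.n3disc} plus the fact that $Z$ is a polynomial ring (so its units are scalars) shows $\phi|_Z$ permutes, up to scalars, the five irreducible factors $y_{13},y_{31},A(1,3)^\ell,A(3,1)^\ell,D^\ell$, and your observation that an element of a connected graded domain whose $\ell$-th power is homogeneous is itself homogeneous is correct and a nice way to pass from $Z$ back to $M$. But there is a genuine gap, and it sits exactly where you flag ``the main obstacle'': you never prove that this permutation preserves degrees, and the mechanism you gesture at would not do it. Knowing that $x_{13},x_{31}$ are the only degree-one normal elements of $M$ does not preclude $\phi(x_{13})$ from being a scalar multiple of $A(1,3)$, $A(3,1)$, or $D$ --- these are also normal, and nothing formal forbids a permutation of the degree multiset $\{1,1,2,2,3\}$ that mixes degrees. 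Since the discriminant is not known to be (locally) dominating here, you also cannot import gradedness first as in Theorem \ref{thm.n2auto}; this degree rigidity is the substantive content that must be supplied.

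The second gap is your closing step: ``feeding $\phi(x_{13})=\alpha x_{13}$, $\phi(x_{31})=\beta x_{31}$ into the defining relations then forces each $\phi(x_{ij})$ to be homogeneous of degree $1$'' is not true as stated. In $\pmathree$ the elements $x_{13}$ and $x_{31}$ are normal and merely skew-commute with every generator, so the relations involving them only constrain the conjugation-weight of each $\phi(x_{ij})$, not its total degree (compare Theorem \ref{thm.free_prod}, where non-graded automorphisms fix all but one generator and hence are invisible to such relations). The paper's proof spends essentially all of its effort at precisely this point: starting from $\deg\sigma(D)=3$, $\deg\sigma(x_{13})=\deg\sigma(x_{31})=1$, $\deg\sigma(A(1,3))=\deg\sigma(A(3,1))=2$, it pins down the remaining degrees one at a time by applying $\sigma$ to quantum Laplace expansions and minor identities such as $x_{22}D = A(3,3)A(1,1) - \lambda^{-2}p_{12}p_{13}p_{23}A(1,3)A(3,1)$ and $D = x_{11}A(1,1)-p_{21}x_{12}A(1,2)+p_{31}p_{32}x_{13}A(1,3)$, together with a case analysis on inequalities like $d_{11}+d_{22}>d_{12}+d_{21}$. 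That bookkeeping is the actual proof, and it does not appear in your proposal.
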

\begin{proof}
Let $\sigma \in \Aut(M)$. Set $d_{ij}=\deg(\sigma(x_{ij})$ and $a_{ij}=\deg(\sigma(A(i,j)))$. We claim $d_{ij}=1$ for all $i,j$. First we show that $d_{11}=d_{33}=1$.

Since $\sigma$ fixes $d(M/Z)$ up to scalar and because $Z$ is a UFD, we may assume that the degrees of all of the factors are preserved. Hence, $\deg(\sigma(D))=3$, $d_{13}=d_{31} = 1$, and $a_{13}=a_{31}=2$. Applying $\sigma$ to $A(3,1)$ gives,
\[ \sigma(A(3,1)) = \sigma(x_{12})\sigma(x_{23})-p_{32}\sigma(x_{13})\sigma(x_{22}).\]
Comparing degrees gives 
\begin{align}\label{eq.deg1}
d_{12}+d_{23}=1+d_{22}.
\end{align} 
A similar computation with $A(1,3)$ gives \begin{align}\label{eq.deg2} d_{21}+d_{32}=1+d_{22}.
\end{align}

Assume $d_{11}+d_{33}>2$. If $d_{11}+d_{22} \leq d_{12}+d_{21}$ and $d_{22}+d_{33}\leq d_{23}+d_{32}$, then
\[ d_{11}+2d_{22}+d_{33} \leq d_{12}+d_{21}+d_{23}+d_{32} = 2+2d_{22},\]
so $d_{11}=d_{33}=1$, contradicting our assumption.

Now we may assume either $d_{11}+d_{22} > d_{12}+d_{21}$ or $d_{22}+d_{33}> d_{23}+d_{32}$. WLOG assume the first. Since,
\[ \sigma(A(3,3)) = \sigma(x_{11})\sigma(x_{22})-p_{21}\sigma(x_{12})\sigma(x_{21}),\]
Then $d(\sigma(A(3,3))) = d_{11}+d_{22}$.

We apply $\sigma$ to the relation
\[ x_{22}D = A(3,3)A(1,1) - \lambda^{-2}p_{12}p_{13}p_{23}A(1,3)A(3,1).\]
Comparing degrees and using the above we have
$d_{22}+3 = d_{11}+d_{22}+a_{11}$. Hence, $d_{11}=1$ and $a_{11}=2$.

Quantum Laplace expansion gives
\[ D = x_{11}A(1,1)-p_{21}x_{12}A(1,2)+p_{31}p_{32}x_{13}A(1,3).\]
Since $d_{11}=d_{13}=1$ and $a_{11}=a_{13}=2$, then $d_{12}+a_{12}=3$, so $d_{12}=1$ and $a_{12}=2$. Similarly, we can show that $d_{21}=1$ and $a_{21}=2$.

We have the relation,
\[ p_{13}p_{21}x_{32}D = A(2,3)A(1,1) - \lambda\inv p_{12}p_{13}p_{32}A(1,3)A(2,1).\]
The degrees on the right-hand side are all known. Hence, $d_{32}=1$. Since, $A(1,3) = x_{21}x_{32} - p_{21}x_{22}x_{31}$, then it follows that $d_{22}=1$. But then $d_{11}+d_{22}=2=d_{12}+d_{21}$, a contradiction. Thus, $d_{11}=d_{33}=1$.

Since $A(2,2)=x_{11}x_{33}-p_{31}x_{13}x_{31}$, then $a_{22}=2$.  Applying $\sigma$ to the relation
\[ \lambda p_{21} p_{31} x_{13}D = A(3,2)A(2,1) - \lambda\inv p_{13}p_{23}p_{21} A(2,2)A(3,1)\]
shows that $a_{32}+a_{21}=4$, so $a_{32}=a_{21}=2$. Similarly, $a_{23}=a_{12}=2$.

Quantum Laplace expansion gives
\[ p_{23} A(2,2)x_{21} = \lambda\inv p_{12}p_{13} A(1,2)x_{11} + \lambda A(3,2)x_{31}.\]
Since the degrees on the right-hand side are all known, this implies that $d_{21}=1$ and $a_{22}=2$. Finally, since $A(3,1) = x_{12}x_{23} - p_{32} x_{22}x_{13}$, then it follows that $d_{22}=1$. Using \eqref{eq.deg1} and \eqref{eq.deg2} completes the proof.
\end{proof}
\end{appendix}

\bibliographystyle{amsalpha}

\providecommand{\bysame}{\leavevmode\hbox to3em{\hrulefill}\thinspace}
\providecommand{\MR}{\relax\ifhmode\unskip\space\fi MR }
\providecommand{\MRhref}[2]{%
  \href{http://www.ams.org/mathscinet-getitem?mr=#1}{#2}
}
\providecommand{\href}[2]{#2}

\end{document}